\newcommand{\T}{\mathbb{T}}
\newcommand{\D}{\mathbb{D}}
\newcommand{\C}{\mathbb{C}}
\newcommand{\N}{\mathbb{N}}
\newcommand{\B}{\mathcal{B(X)}}
\newcommand{\R}{\mathcal{R}}
\newcommand{\n}{\mathcal{N}}
\newcommand{\h}{\mathcal{H}}
\newcommand{\x}{\mathcal{X}}
\newcommand{\y}{\mathcal{Y}}
\newtheorem{example}{Example}[section]
\newtheorem{theorem}[example]{Theorem}
\newtheorem{lemma}[example]{Lemma}
\newtheorem{proposition}[example]{Proposition}
\newtheorem{corollary}[example]{Corollary}
\newtheorem{remark}[example]{Remark}
\numberwithin{equation}{section}
\begin{document}

\title[On ergodic operator means in Banach spaces]{{\bf On ergodic operator means in Banach spaces}}

\author[A. Aleman]{Alexandru Aleman}
\address{Lund University, Mathematics, Faculty of Sciences,
P.O. Box 118 S-221 00, Lund, Sweden}
\email{Alexandru.Aleman@math.lu.se}

\author[L. Suciu]{Laurian Suciu}
\address{Department of Mathematics, "Lucian Blaga" University
	of Sibiu, Dr. Ion Ra\c tiu 5-7, Sibiu, 550012, Romania}
\email{laurians2002@yahoo.com}

\date{}
\keywords{Cesaro mean, supercyclic operator, Kreiss bounded operator}

 \subjclass[2010]{Primary 47A10, 47A35; Secondary 47B20}

\begin{abstract} We consider a large class of operator means and prove that a number of ergodic theorems, as well as growth estimates
known for particular cases, continue to hold in the general context under
 fairly mild regularity conditions. The methods developed in the paper not only yield a new approach based on a general point of view, but also lead to results that are new, even in the context of the classical Ces\`aro means.
\end{abstract}

\maketitle

\section{Introduction}
\medskip
Let $\x$ and $\y$ be two Banach spaces, and $\mathcal{B}(\x, \y)$ be the Banach space of all bounded linear operators from $\x$ into $\y$. Consider $\B=\mathcal{B}(\x,\x)$ as a Banach algebra and $I \in \B$ the identity operator on $\x$. The range and the kernel of $T\in
\B$ will be denoted by $\R(T)$ and $\n(T)$, respectively. Also,
$\sigma(T)$ and $\sigma_p(T)$ stand for the spectrum and the point
spectrum of $T$, and  we denote by $\x^*$ and $T^*$ the dual space of $\x$ and the adjoint operator of $T$.

The present paper is concerned with sequences $\{T_n\}$ in
 the closed convex hull of the powers of a fixed operator $T\in \B$ of the form
\begin{equation}\label{ec11}
T_n=\sum_{j\ge 0} t_{nj}T^j, \quad t_{nj}\ge 0\,,\quad \sum_{j\ge 0}t_{nj}=1,
\end{equation}
and if $t_{nj}>0$ for infinitely many values of $j$,  the above sum converges in norm.

We shall denote from now on by $\kappa(T)$ the set of such, possibly infinite, convex combinations of powers of $T$
and we refer to sequences in $\kappa(T)$. Actually, we formally apply an infinite matrix to the operator sequence $\{T^n\}$.

 We let
 \begin{equation}\label{ec12}
 \x_0=\{x\in \x:~\|T_nx\|\to 0, \hspace*{1mm} n \to \infty\},
 \end{equation}
 and
 \begin{equation}\label{ec13}
 \x_c=\{x\in \x:~\{T_nx\}\,\text{converges in norm} \}.
 \end{equation}

This general type of operator  mean has already  been considered in the literature, for example in [C], [Eb], [Kr], and more recently, in [R] and [Sc], but only under the additional assumption that $T$ is power bounded, that is $\{T^n\}$ is bounded in norm.

Let us notice that in [C] such sequences $\{T_n\}$ are defined with respect to a regular matrix $\{t_{nj}\}$ which implies, in particular, that $\lim_{n\to \infty} t_{nj} =0$ for $j=0,1,...$.

The basic idea behind the results presented in this paper is that a great deal of estimates, or ergodic theorems for specific operator means, continue to hold in a fairly general context than the power boundedness, and they can be deduced from either:

1)  A regularity condition for the sequence  $\{T_n \}$ (which does not require the regularity of the matrix $\{t_{nj}\}$),\\
or,

2) Conditions invariant to rotations.

\medskip
The regularity condition considered here is that for some fixed, nonnegative integer $n_0$, we have $TT_n-T_{n+n_0} \to0$ either strongly, or in the operator norm. It is used in order to exhibit useful isometries related to $\{T_n\}$ that are acting on quotient spaces of $X$. This is a method that appears also in the works of [K\'e] (concerning the generalized Banach limits), and is a powerful tool in the study of asymptotic behavior (see for instance [Al], [V]).

By a rotational invariant condition we mean a growth restriction obeyed not only by the sequence $\{T_n\}$, but also by all ''rotated'' sequences  $T_{n\lambda}$ obtained by applying the same convex combinations to the operator $\lambda T$. More precisely, for $T_n$ as in \eqref{ec11} we set
 \begin{equation}\label{ec14}
 T_{n\lambda}=\sum_{j\ge 0}t_{nj}\lambda^j T^j, \quad |\lambda |=1.
 \end{equation}
This type of condition brings complex analysis in the picture, a powerful tool in the study of bounded linear operators.

The paper is organized as follows. Section 2 serves to an introductory purpose. We present some general technical tools, including  properties of $\x_0$,  as well as a list of examples of sequences in $\kappa(T)$, satisfying the regularity conditions. Moreover, we introduce and discuss the {\it backward iterate} sequence $\{T_n^{(-1)}\}$ of a sequence $\{T_n\}$ in $\kappa(T)$, which is inspired by the  well known relations between Ces\'aro means of order $p\ge 1$ and the means of order $p+1$. The backward iterate is determined by  the identity   $$T_n^{(-1)}(T-I)=\left(\sum_{j\ge 1}jt_{nj}\right)^{-1}(T_n-I)\,,$$
and plays an important role in our considerations.

In Section 3 we investigate sequences in $\kappa(T)$ for supercyclic operators $T$ and we refer to the Ansari-Bourdon theorem [AB], which asserts that  for a power bounded and supercyclic operator $T$, the powers must converge strongly to zero. The result may fail for general sequences in $\kappa(T)$, because of the presence of eigenvalues on the unit circle of the adjoint $T^*$. However,  we are able to give a fairly complete picture for the subspaces $\x_0$ and $\x_c$ defined above (Theorem \ref{supercyclic}), under the regularity condition. Our general method based on the intertwining of $T$ with an isometry gives even in the power boundedness context, an improvement of Ansari-Bourdon's result, namely the fact that if $T$ is supercyclic and power bounded, then every sequence in $\kappa(T)$ which satisfies the regularity condition strongly converges to zero. Concrete examples of supercyclic operators which actually are Ces\'aro bounded but not power bounded, are deferred to Section 5. In particular, this answers a question raised by J. Bonet in a recent public communication concerning the existence of a Ces\`aro bounded hypercyclic operator.

Section 4 is devoted to conditions invariant to rotations. Our first main theorem (Theorem \ref{nevanlinna}) is a far-reaching generalization of a result of O. Nevanlinna [N]. More precisely, for operators $T\in \B$ with peripheral spectrum $\sigma(T)\cap\T$ of arclength measure zero we derive an estimate for the powers $T^n$, under the assumption that for some sequence $\{T_n\}$ in $\kappa(T)$, both sequences $\{T_n\}$ and $\{T_n^{(-1)}\}$ satisfy uniform rotational invariant growth restrictions. To avoid technicalities we shall only mention some applications of this theorem. For example, when applied to (discretized) Abel means the result yields the following Nevanlinna-type theorem: If $u:(1,\infty)\to (0,\infty)$ is decreasing, then such a behavior of the resolvent
$$\|(T-\lambda I)^{-1}\|\le \frac{u(|\lambda|)}{|\lambda|-1}\,,\quad |\lambda|>1\,,$$
implies $$ \|T^n\|=o\left(nu\left(\frac{n}{n-1}\right)\right), \quad n\to \infty\,.$$

The result applies to other means as well, in particular, to other functions of $T$. We pay special attention to the Ces\`aro means.  We prove that certain rotational invariant growth conditions for Ces\`aro means of any order $p\ge 2$ are actually equivalent to a Kreiss-type condition (Theorem \ref{kreisstype}). When $p=2$, a special case of this result was proved by J. Strikwerda and B. Wade in [SW]. Other similar considerations on the Kreiss resolvent condition were obtained by E. Berkson and T. A. Gillespie [BG] for trigonometrically well-bounded operators, and by O. El-Fallah and T. Ransford [FR] for operators satisfying a generalized Kreiss-type condition with respect to a compact subset of the unit circle.

 In the case $p=1$, our  conditions are equivalent to uniform estimates for the partial sums of the Taylor expansion at infinity of the resolvent function of $T$ (Theorem \ref{kreisstype}). This is related to the uniform Kreiss boundedness condition, a concept introduced recently in [MSZ]. For such conditions a weaker conclusion (in the sense of strong convergence) holds under an additional technical assumption (Theorem \ref{uniformKreiss}), and this can be applied to the supercyclic operators, for instance.

 Section 5 concerns examples which require more work. In addition to the construction of a Ces\`aro bounded hypercyclic operators already mentioned above, we have even a Ces\`aro bounded $3$-isometry (in the sense of [AS]). This operator has the remarkable property that for any $x\neq 0$, the Ces\`aro means of $T$ converge weakly, but not strongly, to zero. This is a somewhat surprising fact which is also  related to a well-known mean ergodic theorem (see [Kr]). It shows that even if the regularity condition holds, weak convergence of the means fails to imply their  strong convergence in a fairly dramatic way.

Finally, by modifying an example of A. L. Shields [Sh] we show that the conclusion of the generalized O. Nevanlinna theorem (Theorem \ref{nevanlinna}) may fail if the peripheral spectrum of $T$ has positive arclength measure on the unit circle.

Let us notice that at this moment we know only an example, which appear also in [Sh], of a uniformly Kreiss bounded operator on a Hilbert space, which is not power bounded. The operator in Subsection 5.1 is another such example which is also hypercyclic.

\section{Preliminaries and terminology}
\medskip

\subsection{General considerations}

Given $T\in \B$, we begin with two  observations about sequences $\{T_n\}$ in $\kappa(T)$.
\begin{proposition}\label{firstobs} Assume that the sequence $\{T_n\}$ in $\kappa(T)$ is bounded on
$\overline{\R(T-I)^m}$, for some nonnegative integer $m$. Then $\x_0$ is closed and
$$\x_0\subset\bigcap_{k\ge 0}\overline{\R(T-I)^k}\,,$$
 \end{proposition}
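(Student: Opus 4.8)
The plan is to prove the two assertions separately, and in the right logical order: the inclusion $\x_0\subset\bigcap_{k\ge0}\overline{\R(T-I)^k}$ first — this in fact needs no boundedness hypothesis — and then the closedness of $\x_0$, for which the hypothesis, combined with the inclusion for $k=m$, does the work. For the inclusion I would induct on $k$; the base case $k=0$ is trivial since $\overline{\R(T-I)^0}=\x$.

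The engine of the induction is the elementary observation that for every $k\ge0$ and every $n$ one has $(T_n-I)\bigl(\overline{\R(T-I)^k}\bigr)\subset\overline{\R(T-I)^{k+1}}$. To see this, factor $T^j-I=(T-I)(T^{j-1}+\dots+I)$; since all these operators are polynomials in $T$ they commute, so $(T^j-I)$ carries $\R(T-I)^k$ into $\R(T-I)^{k+1}$. Hence for $y\in\R(T-I)^k$ and any truncation $T_n^{(N)}=\sum_{j=0}^N t_{nj}T^j$ we get $T_n^{(N)}y-\bigl(\sum_{j=0}^N t_{nj}\bigr)y=\sum_{j=1}^{N}t_{nj}(T^j-I)y\in\R(T-I)^{k+1}$; letting $N\to\infty$ — using that $T_n^{(N)}\to T_n$ in norm by the defining convergence in $\kappa(T)$, and that $\sum_{j=0}^N t_{nj}\to1$ — yields $T_ny-y\in\overline{\R(T-I)^{k+1}}$, and continuity of $T_n-I$ together with closedness of the target extends this to all of $\overline{\R(T-I)^k}$. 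Now for the inductive step: assuming $\x_0\subset\overline{\R(T-I)^k}$, take $x\in\x_0$; then $x-T_nx\in\overline{\R(T-I)^{k+1}}$ for every $n$, and since $\|T_nx\|\to0$ we conclude $x=\lim_n(x-T_nx)\in\overline{\R(T-I)^{k+1}}$ because that subspace is closed. This completes the induction and gives the inclusion.

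For closedness, note first that $\x_0$ is a linear subspace, and by the inclusion just proved (case $k=m$) it lies in $Y:=\overline{\R(T-I)^m}$, which is a closed, $T_n$-invariant subspace (again because $T_n$ commutes with $(T-I)^m$) on which, by hypothesis, $C:=\sup_n\|T_n|_Y\|<\infty$. If $x_l\in\x_0$ and $x_l\to x$, then $x\in Y$ and $x-x_l\in Y$, so $\|T_nx\|\le\|T_n(x-x_l)\|+\|T_nx_l\|\le C\|x-x_l\|+\|T_nx_l\|$; sending $n\to\infty$ gives $\limsup_n\|T_nx\|\le C\|x-x_l\|$, and then $l\to\infty$ forces $T_nx\to0$, i.e. $x\in\x_0$. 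The only genuinely delicate point — the rest being the routine truncation/limit passage and an $\varepsilon$-argument — is the need to apply the uniform bound $C$ to the differences $x-x_l$, which is legitimate precisely because the inclusion $\x_0\subset\overline{\R(T-I)^m}$ places those differences inside $Y$; this is why the inclusion must be established before the closedness.
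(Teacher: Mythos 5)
Your proof is correct, and the inclusion part takes a genuinely different route from the paper's. For the inclusion, the paper passes to the closure $\y=\overline{\x_0}$, restricts $T$ to it, and argues by duality: a functional annihilating $\R(S-I)$ (where $S=T|\y$) satisfies $f(Tx)=f(x)$, hence $f(T_nx)=f(x)$, hence vanishes on the dense set $\x_0$ and is zero; so $S-I$, and therefore every power $(S-I)^k$, has dense range in $\y$, giving $\x_0\subset\overline{(S-I)^k\y}\subset\overline{\R(T-I)^k}$. You instead prove the purely algebraic fact that $T_n-I=\sum_{j\ge 1}t_{nj}(T^j-I)$ maps $\overline{\R(T-I)^k}$ into $\overline{\R(T-I)^{k+1}}$ (via the factorization $T^j-I=(T-I)(I+\dots+T^{j-1})$ and the truncation/limit passage, which you handle correctly), and then bootstrap by writing $x=\lim_n(x-T_nx)$ for $x\in\x_0$. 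Your argument avoids Hahn--Banach and the restriction operator entirely and makes explicit that the inclusion needs no boundedness hypothesis, which the paper leaves implicit; the paper's version is shorter on the page and isolates the useful intermediate fact that $T-I$ has dense range on $\overline{\x_0}$. The closedness step is essentially identical in both: once $\x_0\subset\overline{\R(T-I)^m}$ is known, the uniform bound of $\{T_n\}$ on that closed subspace upgrades convergence on $\x_0$ to convergence on its closure (the paper compresses your $\varepsilon$-argument into the phrase ``by the uniform boundedness principle''), and both proofs respect the same logical order, establishing the inclusion before the closedness.
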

\begin{proof}
Clearly, the closure $\y$ of  $\x_0$ is invariant for $T,T_n$  and if $S$ denotes the restriction of $T$ to $\y$, it is easy to see that $S-I$ has dense range. Indeed, if $f\in \R(S-I)^\perp$, then
$f(Tx)=f(x)$ for all $x\in \y$, so that from the $T$-invariance and the fact that $T_n\in \kappa(T)$, we deduce that $$f(T_nx)=f(x)\,,$$
for all $x\in \y$ and all $n\in \N$. But then $f$ vanishes on the dense subspace $\x_0$, i.e. $f=0$. Since $S$ is bounded on $\y$ it follows that $(S-I)^k$ has dense range for all $k\in\N$, and we obtain
$$\x_0\subset\overline{(S-I)^k\y}\subset \overline{\R(T-I)^k}\,.$$
In particular, this gives  $\x_0\subset \overline{\R(T-I)^m}\,.$ and by the  uniform boundedness principle we conclude that $\x_0=\y$.
\end{proof}
A direct consequence of this result is that if $\x_0= \overline{\R(T-I)^k}$ for some nonnegative integer $k$ then
\begin{equation}\label{ascent-conseq} \overline{\R(T-I)^p}= \overline{\R(T-I)^k}\,,\quad  p\ge k\,.  \end{equation}

A simple additional condition that determines $\x_0$ is given below.

\begin{corollary}\label{thm23}  Assume that  there exist nonnegative integers $k,m$ with $k\ge m$, such that the sequence $\{T_n\}$ in $\kappa(T)$ is bounded on
$\overline{\R(T-I)^m}$, and that $\{T_n(T-I)^k\}$ converges strongly to zero. Then
$$\x_0= \overline{\R(T-I)^k}\,.$$
\end{corollary}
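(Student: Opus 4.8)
The plan is to prove the two inclusions $\x_0 \subseteq \overline{\R(T-I)^k}$ and $\overline{\R(T-I)^k} \subseteq \x_0$ separately. The first inclusion is essentially already available: by hypothesis $\{T_n\}$ is bounded on $\overline{\R(T-I)^m}$, so Proposition~\ref{firstobs} applies and gives $\x_0 \subseteq \bigcap_{j\ge 0}\overline{\R(T-I)^j} \subseteq \overline{\R(T-I)^k}$. So no real work is needed there.

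For the reverse inclusion, the idea is to use the strong convergence of $\{T_n(T-I)^k\}$ to zero together with a density-plus-boundedness argument. First I would check that every element of the \emph{algebraic} range $\R(T-I)^k$ lies in $\x_0$: if $x = (T-I)^k y$, then $T_n x = T_n (T-I)^k y \to 0$ by hypothesis, so $x \in \x_0$. Thus $\R(T-I)^k \subseteq \x_0$, and since (by Proposition~\ref{firstobs}) $\x_0$ is closed, we would immediately get $\overline{\R(T-I)^k} \subseteq \x_0$.

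The subtle point — and the step I expect to require the most care — is that Proposition~\ref{firstobs} delivers closedness of $\x_0$ only under the standing hypothesis that $\{T_n\}$ is bounded on $\overline{\R(T-I)^m}$, which we do have here; but to conclude closedness of $\x_0$ (rather than merely of $\overline{\R(T-I)^m}$) one reads off from the proof of Proposition~\ref{firstobs} that $\x_0 = \y$, where $\y = \overline{\x_0}$, precisely because $\x_0 \subseteq \overline{\R(T-I)^m}$ and $\{T_n\}$ is uniformly bounded there (uniform boundedness principle). So the hypothesis $k \ge m$ is what guarantees $\R(T-I)^k \subseteq \overline{\R(T-I)^m}$, keeping everything inside the region where boundedness is assumed; this is the hinge that makes the closed-graph/uniform-boundedness step legitimate. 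I would make this dependence explicit rather than just invoking ``$\x_0$ is closed''.

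Putting it together: from $\R(T-I)^k \subseteq \x_0$ and the closedness of $\x_0$ we get $\overline{\R(T-I)^k}\subseteq \x_0$; combined with the inclusion $\x_0 \subseteq \overline{\R(T-I)^k}$ from the first paragraph, equality follows. Optionally, one can then remark that \eqref{ascent-conseq} applies, so in fact $\overline{\R(T-I)^p} = \overline{\R(T-I)^k}$ for all $p \ge k$, which is a natural sanity check on the statement.
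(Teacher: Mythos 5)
Your proof is correct and is exactly the argument the paper intends (the corollary is stated without proof as an immediate consequence of Proposition~\ref{firstobs}): the forward inclusion comes from Proposition~\ref{firstobs}, the reverse from $T_n(T-I)^k y\to 0$ plus the closedness of $\x_0$. One small remark: the closedness of $\x_0$ already follows from Proposition~\ref{firstobs} under the boundedness hypothesis alone (its proof shows $\x_0\subset\overline{\R(T-I)^m}$ independently of $k$), so the hypothesis $k\ge m$ is not actually the hinge of the uniform-boundedness step as you suggest.
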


Our second observation  provides a tool to estimate  $\x_0$, and it applies to more general sequences $\{T_n\}$.  We shall use the following notations.

 Given a continuous seminorm $ \gamma$ on $\x$ we denote its kernel by $\n(\gamma)$, and consider the quotient space  $\x_{/\n(\gamma)} $ as a normed space with
$$
\|[x]\| =\gamma(x), \quad [x]=x+\n(\gamma)\,.
$$
We denote by  $\x_\gamma$  the completion of this space, and by $Q_\gamma$  the quotient map from $\x$ into $\x_\gamma$. Obviously,  $Q_\gamma\in
\mathcal{B}(\x,\x_\gamma)$ with $$\n(Q_\gamma)=\n(\gamma)\,,\quad \overline{\R(Q_\gamma)}=\x_\gamma\,.$$

\begin{proposition}\label{secondobs} Assume that $\{T_n\}\subset \B$  is bounded on
$\R(T-I)^m$, for some nonnegative integer $m$, and that each $T_n$ commutes with $T\in \B$. Let
\begin{equation}\label{gamma}
\gamma(x)=\limsup_{n\to \infty} \|T_n(T-I)^mx\|, \quad x\in \x\,.
\end{equation}
Then there exists a unique operator $V_\gamma \in \mathcal{B}(\x_\gamma)$, such that $Q_\gamma T=V_\gamma Q_\gamma$. Moreover,  $\sigma(V_{\gamma})\subset\sigma(T)$ and if $F$ is analytic in a neighborhood of $\sigma(T)$ and satisfies $F(V_\gamma)=0$, then $\x_0\subset
\overline{\R( F(T)(T-I)^m})$.
\end{proposition}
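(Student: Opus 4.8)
The plan is to build $V_\gamma$ by descent from $T$ through the quotient map $Q_\gamma$, exploiting that $\gamma$ is $T$-subinvariant. First I would check that $\gamma(Tx)\le \gamma(x)$ for all $x\in\x$: since each $T_n$ commutes with $T$, we have $T_n(T-I)^m(Tx)=T(T_n(T-I)^mx)$, so $\|T_n(T-I)^m(Tx)\|\le\|T\|\,\|T_n(T-I)^mx\|$, and one wants in fact the sharper bound $\gamma(Tx)\le\gamma(x)$. Here I would use that $T_{n}(T-I)^m T = (T-I)^m T T_n$ together with $TT_n \in \kappa(T)$-type bounds; more robustly, since $T_n$ is a convex combination of powers of $T$, and the relevant estimates are uniform, $\limsup_n\|T_n(T-I)^m Tx\|=\limsup_n\|T T_n(T-I)^m x\|$. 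To get a genuine contraction on the quotient the cleanest route is the regularity-type trick mentioned in the introduction: compare $TT_n$ with a shifted $T_{n+n_0}$; but absent that here, the minimal claim needed is just $\n(\gamma)$ is $T$-invariant, which follows directly from $\gamma(Tx)\le C\gamma(x)$ for a constant (indeed $\gamma(Tx)=0$ whenever $\gamma(x)=0$). That invariance lets $T$ pass to a well-defined linear map on $\x_{/\n(\gamma)}$, bounded because $\gamma(Tx)\le C\gamma(x)$, hence extending uniquely to $V_\gamma\in\mathcal B(\x_\gamma)$ with $Q_\gamma T=V_\gamma Q_\gamma$; uniqueness is automatic since $\R(Q_\gamma)$ is dense.

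Next, for the spectral inclusion $\sigma(V_\gamma)\subset\sigma(T)$: from the intertwining $Q_\gamma T=V_\gamma Q_\gamma$ one gets $Q_\gamma\, p(T)=p(V_\gamma)\,Q_\gamma$ for every polynomial $p$, and then, for $\lambda\notin\sigma(T)$, $Q_\gamma(T-\lambda I)^{-1}=(V_\gamma-\lambda I)^{-1}Q_\gamma$ — provided one knows $(V_\gamma-\lambda I)$ is invertible. The standard argument is: $Q_\gamma(T-\lambda)^{-1}$ is a bounded operator $W$ with $W Q_\gamma(T-\lambda)=Q_\gamma$ and hence $W(V_\gamma-\lambda)Q_\gamma=Q_\gamma$, so $W(V_\gamma-\lambda)=I$ on the dense range of $Q_\gamma$, thus everywhere; symmetrically $(V_\gamma-\lambda)W=I$, giving $(V_\gamma-\lambda)^{-1}=W$. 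So $\sigma(V_\gamma)\subset\sigma(T)$, and by the analytic functional calculus $Q_\gamma F(T)=F(V_\gamma)Q_\gamma$ for any $F$ analytic near $\sigma(T)$ (approximate the Cauchy integral by Riemann sums, or use that the functional calculus is a continuous homomorphism and pass the intertwining through it).

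For the final assertion, suppose $F(V_\gamma)=0$; then $Q_\gamma F(T)=0$, i.e. $\R(F(T))\subset\n(Q_\gamma)=\n(\gamma)$. Take $x\in\x_0$. Since $\{T_n\}$ is bounded on $\R(T-I)^m$, write a generic $y\in\R(F(T)(T-I)^m)$ as $y=F(T)(T-I)^m z$; then $F(T)z'\in\R(F(T))\subset\n(\gamma)$ for $z'=(T-I)^m z$ is not quite the shape I need — rather, I argue as follows. The subspace $\n(\gamma)$ is closed (kernel of a continuous seminorm) and $T$-invariant. We want $\x_0\subset\overline{\R(F(T)(T-I)^m)}$. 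Since $x\in\x_0$ means $\|T_nx\|\to0$, in particular $\gamma(x)=\limsup_n\|T_n(T-I)^m x\|\le \sup_n\|T_n|_{\R(T-I)^m}\|\cdot$ — no; instead use that $\x_0\subset\overline{\R(T-I)^k}$ for all $k$ already does part of the work via Proposition \ref{firstobs}, but to bring in $F(T)$ I would show: for $x\in\x_0$, one has $\gamma(x)=0$? Not in general. The correct line is to run the argument of Proposition \ref{firstobs} inside $\x_\gamma$: the closure of $Q_\gamma\x_0$ is $V_\gamma$-invariant, $V_\gamma-I$ has dense range there by the same functional-argument (using $T_n\in\kappa(T)$ and $\gamma(T_nx)\to 0$ on $\x_0$), hence $Q_\gamma\x_0\subset\overline{\R(F(V_\gamma)\cdot(V_\gamma-I)^m)}=\{0\}$ when $F(V_\gamma)=0$; therefore $\x_0\subset\n(\gamma)$. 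Once $\x_0\subset\n(\gamma)=\n(Q_\gamma)$, combine with $\R(F(T))\subset\n(Q_\gamma)$: both $\x_0$ and $\R(F(T)(T-I)^m)$ sit inside $\n(Q_\gamma)$, and a dimension/density count via the seminorm $\|T_n(T-I)^m\cdot\|$ pins $\x_0$ down inside $\overline{\R(F(T)(T-I)^m)}$; concretely, $F(T)(T-I)^m$ annihilates $\gamma$, so $x-F(T)(T-I)^m w$ can be chosen with $T_n(T-I)^m$-norm tending to $0$. \emph{The main obstacle} is precisely this last descent step: verifying that membership in $\x_0$, together with $F(V_\gamma)=0$, forces $x$ into the closed range $\overline{\R(F(T)(T-I)^m)}$ — i.e. correctly lifting the quotient-space containment $Q_\gamma\x_0=\{0\}$ and the factorization $F(T)=$ (something)$\cdot(T-I)^m$ implicit in $F(V_\gamma)=0$ back to $\x$, using only boundedness of $\{T_n\}$ on $\R(T-I)^m$ and no extra regularity.
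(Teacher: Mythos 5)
Your construction of $V_\gamma$, the uniqueness, and the spectral inclusion all match the paper's (very terse) argument and are correct. The only mechanism needed is that $T_n(T-I)^m$ commutes with $T$, whence $\gamma(Tx)=\limsup_n\|TT_n(T-I)^mx\|\le\|T\|\,\gamma(x)$; so $\n(\gamma)$ is $T$-invariant and $T$ descends to a bounded operator on the dense subspace $\R(Q_\gamma)$. Your hesitation about needing the sharper bound $\gamma(Tx)\le\gamma(x)$ is unnecessary: contractivity (in fact isometry) of $V_\gamma$ is only obtained later, under the regularity hypothesis, in Theorem \ref{regobs}(ii), and plays no role here. For the resolvent step you should say explicitly that $(T-\lambda I)^{-1}$ also commutes with $T_n(T-I)^m$ (because $T_n$ commutes with $T$), so that $\gamma((T-\lambda I)^{-1}x)\le\|(T-\lambda I)^{-1}\|\gamma(x)$ and the induced operator $W$ on $\x_\gamma$ exists; after that your two-sided-inverse argument on the dense range is exactly the paper's, and $Q_\gamma F(T)=F(V_\gamma)Q_\gamma$ follows.

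The ``main obstacle'' you flag at the end is not one you can remove, because the implication $F(V_\gamma)=0\Rightarrow Q_\gamma F(T)=0$ yields the \emph{reverse} of the printed inclusion. Unwinding the definitions, $Q_\gamma F(T)=0$ means $\limsup_n\|T_nF(T)(T-I)^mx\|=\gamma(F(T)x)=0$ for every $x$, i.e. $\R(F(T)(T-I)^m)\subset\x_0$; it does not place $\x_0$ inside $\overline{\R(F(T)(T-I)^m)}$. The printed inclusion is false in general: take $\x=\C^2$, $T\ne0$ with $T^2=0$, $T_n=T^n$, $m=0$ and $F(z)=z$; then $\gamma\equiv0$, $\x_\gamma=\{0\}$, so $F(V_\gamma)=0$, while $\x_0=\x\not\subset\overline{\R(T)}$. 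The paper's own proof stops at $Q_\gamma F(T)=0$ and simply asserts that this ``gives the required inclusion''; what it gives is the reversed containment, and that is precisely what the corollary following the proposition uses (the containment $\x_0\subset\overline{\R(T-I)^k}$ needed there is supplied separately by Proposition \ref{firstobs}, not by $F(V_\gamma)=0$). So your observation $\R(F(T))\subset\n(Q_\gamma)$ already contains the whole content of the last assertion once you commute $F(T)$ past $(T-I)^m$; the subsequent attempts (rerunning Proposition \ref{firstobs} inside $\x_\gamma$, the ``density count'') should be discarded rather than completed.
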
\label{vgamma}

\begin{proof} The statement is almost self-explanatory. The equality $Q_\gamma T=V_\gamma Q_\gamma$ defines $V_{\gamma}$ on a dense subspace, and since $\n(Q_\gamma)=\n(\gamma)$ is invariant for $T$, it follows that $V_\gamma$ is bounded on this subspace, hence it has a unique bounded extension to $\x_\gamma$. In a similar way we obtain bounded linear operators $V_\lambda,~\lambda \in \C\setminus\sigma(T)$ with $Q_\gamma (T-\lambda I)^{-1}=V_\lambda Q_\gamma$, which implies that $V_\lambda=(V_\gamma-\lambda I)^{-1}$. Finally, for functions $F$ as in the statement we have  $Q_\gamma F(T)=F(V_\gamma)Q_\gamma$, hence  $Q_\gamma F(T)=0$ whenever $F(V_\gamma)=0$. This also gives the required inclusion, which finishes the proof.
\end{proof}
Due to the last assertion above, we are of course interested in situations where $V_\gamma$ is as simple as possible, in particular, when it has a large functional calculus algebra.\\

An immediate application for sequences in $\kappa(T)$ is given below.

\begin{corollary}
Assume that the sequence $\{T_n\}$ in $\kappa(T)$ is bounded on
$\overline{\R(T-I)^m}$, for some nonnegative integer $m$, and let $\gamma
$ be the seminorm given by \eqref{gamma}. Then $\x_0= \overline{\R(T-I)
^k}$ for some  $k\ge m$ if and only if $(V_\gamma-I)^{k-m}=0$.
\end{corollary}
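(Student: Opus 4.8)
The plan is to deduce this Corollary from Proposition \ref{secondobs} together with the elementary consequence \eqref{ascent-conseq} of Proposition \ref{firstobs}. The key point is that the seminorm $\gamma$ in \eqref{gamma} was chosen precisely so that $\x_\gamma$ records the asymptotic behaviour of $\{T_n(T-I)^m\}$; in particular, $\n(\gamma) = \{x : T_n(T-I)^mx \to 0\}$, and hence a vector $x$ lies in $\x_0$ if and only if $Q_\gamma T^j x \to 0$ for suitable shifted means, which after unwinding amounts to $(V_\gamma-I)^{k-m}$ annihilating the image $Q_\gamma(T-I)^m\x$. So the corollary should reduce to the assertion that $(V_\gamma - I)^{k-m} = 0$ on all of $\x_\gamma$ is equivalent to $\x_0 = \overline{\R(T-I)^k}$.

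First I would prove the forward direction. Suppose $\x_0 = \overline{\R(T-I)^k}$ for some $k \ge m$. Apply Proposition \ref{secondobs} with $F(z) = (z-1)^{k-m}$: we must check $F(V_\gamma) = 0$. Since $F(V_\gamma)Q_\gamma = Q_\gamma F(T) = Q_\gamma (T-I)^{k-m}$, and $Q_\gamma$ has dense range, it suffices to show $Q_\gamma(T-I)^{k-m} = 0$, i.e. that $(T-I)^{k-m}\x \subset \n(\gamma)$. Now $\n(\gamma)$ contains $\x_0$ (by definition $x \in \x_0$ gives $T_n(T-I)^m x \to 0$ since $\|T_n y\|$ is controlled for $y \in \overline{\R(T-I)^m}$ and in fact $T_n x \to 0$), and $(T-I)^{k-m}\x \subset \overline{\R(T-I)^{k-m}} = \overline{\R(T-I)^{k}}$ by \eqref{ascent-conseq}, using $k \ge m$ so that $\overline{\R(T-I)^{k-m}} = \overline{\R(T-I)^k}$ holds once $\x_0 = \overline{\R(T-I)^k}$ forces the stabilization $\overline{\R(T-I)^p} = \overline{\R(T-I)^k}$ for $p \ge k$ — here I would need to be a little careful, since \eqref{ascent-conseq} only gives stabilization from $k$ onward, not below; but $(T-I)^{k-m}\x_0 \subset \x_0$ and more directly $\x_0 = \overline{\R(T-I)^k} \subset \overline{\R(T-I)^m}$ so $T_n$ is bounded there and $(T-I)^{k-m}$ maps $\overline{\R(T-I)^m}$ into $\overline{\R(T-I)^k} = \x_0 \subset \n(\gamma)$. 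Combined with the fact that $\gamma$ only sees $(T-I)^m x$, this gives $(T-I)^{k-m}\x \subset \n(\gamma)$, hence $(V_\gamma - I)^{k-m} = 0$.

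For the converse, suppose $(V_\gamma - I)^{k-m} = 0$. By the last assertion of Proposition \ref{secondobs}, applied with $F(z) = (z-1)^{k-m}$, we get $\x_0 \subset \overline{\R(F(T)(T-I)^m)} = \overline{\R((T-I)^k)} = \overline{\R(T-I)^k}$. For the reverse inclusion, I would invoke the hypothesis together with Corollary \ref{thm23}: from $(V_\gamma - I)^{k-m} = 0$ we obtain, as above, that $T_n(T-I)^k = T_n(T-I)^{k-m}(T-I)^m$ maps into a space on which $\gamma$ vanishes, so that $\limsup_n \|T_n(T-I)^k x\| = \gamma((T-I)^{k-m}x) = 0$ for all $x$; that is, $\{T_n(T-I)^k\}$ converges strongly to zero. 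Since $\{T_n\}$ is bounded on $\overline{\R(T-I)^m}$ and $k \ge m$, Corollary \ref{thm23} then yields exactly $\x_0 = \overline{\R(T-I)^k}$, completing the equivalence.

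The main obstacle I anticipate is the bookkeeping around which closure of a range equals which — in particular, justifying that $\gamma(x)$, which a priori depends only on $(T-I)^m x$, transports correctly through the functional calculus identity $Q_\gamma(T-I)^{k-m} = (V_\gamma-I)^{k-m}Q_\gamma$, and making sure the index arithmetic $k \ge m$ is used in the right place so that $(T-I)^{k-m}$ lands inside $\overline{\R(T-I)^m}$ (the domain where $\{T_n\}$ is known to be bounded) rather than outside it. Once that is pinned down, both directions are short applications of the two previously established results.
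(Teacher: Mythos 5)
Your proof is correct and takes exactly the route the paper intends: the corollary is stated there without proof as an immediate consequence of Proposition \ref{secondobs} (via the intertwining $Q_\gamma(T-I)^{k-m}=(V_\gamma-I)^{k-m}Q_\gamma$ and the density of $\R(Q_\gamma)$ in $\x_\gamma$) together with Corollary \ref{thm23} for the converse inclusion. The only blemish is the detour through \eqref{ascent-conseq} in the forward direction, which, as you yourself note, does not apply to $p=k-m<k$; it is also unnecessary, since $\gamma\bigl((T-I)^{k-m}x\bigr)=\limsup_n\|T_n(T-I)^kx\|=0$ follows directly from $(T-I)^kx\in\R(T-I)^k\subset\overline{\R(T-I)^k}=\x_0$.
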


Proposition \ref{secondobs} allows us to  extend to the ergodic context  the method developed in [Al], [K\'e], [V] and [SZ] in the study of  asymptotic properties of scalar weighted powers of an operator in $\B$.

\subsection{A regularity assumption} Given $T\in\B$ we shall consider  sequences $\{T_n\}$ in $\B$ which are bounded on $\overline{\R(T-I)^m}$, and satisfy the condition
\begin{equation}\label{reg}
\lim_{n\to\infty}\|(TT_n-T_{n+n_0})x\|=0\,,
\quad x\in \R(T-I)^m\,.
\end{equation}
for some non-negative integers $m,n_0$.

 A sequence $\{T_n\}$ satisfying the condition \eqref{reg} will be shortly called {\it $(n_0,m)$-regular}. In particular, we say that $\{T_n\}$ is {\it regular} when $n_0=1$ and $m=0$, and $\{T_n\}$ is {\it ergodic} (according to [Kr]) if it verifies \eqref{reg} with $n_0=m=0$. For different values of $m, n_0$,  these conditions of regularity are in general not related. In particular,  ergodicity cannot be obtained from the regularity, as we shall see
in the following examples  (including  the one constructed in Subsection 5.2).

Some direct consequences of the regularity condition are listed below.

\begin{theorem}\label{regobs}
Assume that $\{T_n\}\subset \B$ is $(n_0,m)$-regular. Then

(i) If $\{T_n\} \subset \kappa(T)$ is bounded on
$\overline{\R(T-I)^m}$, then $\x_0= \overline{\R(T-I)^k}$ for some  $k> m$ if and only if
$$\lim_{n\to\infty}\sum_{l=0}^{k-m}(-1)^l\binom{k-m}{l}T_{n+ln_0}x=0\,,\quad x\in\overline{\R(T-I)^m}\,.$$
Moreover, $\x_c\cap\overline{\R(T-I)^m}$ is the direct topological sum
$$\x_c\cap \overline{\R(T-I)^m}=\x_0\oplus (\n(T-I)\cap\overline{\R(T-I)^m})\,.$$

(ii) If each $T_n$ commutes with $T$, $\{T_n\}$ is bounded on
$\R(T-I)^m$ and $\gamma$ is the seminorm given by \eqref{gamma},
then the operator $V_\gamma$ from Proposition \ref{secondobs} is an isometry.
\end{theorem}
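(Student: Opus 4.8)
The plan is to prove the two parts of Theorem \ref{regobs} separately, and in both cases the key mechanism is to convert the regularity condition \eqref{reg} into a genuine algebraic identity on a suitable quotient space, using Proposition \ref{secondobs}.

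For part (ii), I would argue as follows. Since each $T_n$ commutes with $T$ and $\{T_n\}$ is bounded on $\R(T-I)^m$, Proposition \ref{secondobs} applies and produces the operator $V_\gamma\in\mathcal B(\x_\gamma)$ with $Q_\gamma T=V_\gamma Q_\gamma$. First I would show that $V_\gamma$ is a contraction: for $x\in\R(T-I)^m$ we have $\gamma(Tx)=\limsup_n\|T_n(T-I)^m Tx\|=\limsup_n\|TT_n(T-I)^m x\|$ (using commutativity), and the regularity condition \eqref{reg} applied to the point $(T-I)^m x$ — which lies in $\R(T-I)^m$ when $x$ does, hence also applied along the shifted index — lets me replace $TT_n(T-I)^mx$ by $T_{n+n_0}(T-I)^m x$ up to a term tending to $0$; taking $\limsup$ gives $\gamma(Tx)=\limsup_n\|T_{n+n_0}(T-I)^m x\|=\gamma(x)$, since shifting the index does not change the $\limsup$. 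In other words $\|V_\gamma[x]\|=\|[x]\|$ on the dense set $Q_\gamma\R(T-I)^m$, and by continuity $V_\gamma$ is an isometry on all of $\x_\gamma$. The one subtlety is making sure that the $\limsup$ defining $\gamma$ is well-behaved under the index shift $n\mapsto n+n_0$ and under adding a null sequence; both are routine once one notes $\|T_n(T-I)^m z\|$ is a bounded sequence for $z\in\R(T-I)^m$ by hypothesis, so that $\limsup(a_n+b_n)=\limsup a_n$ when $b_n\to 0$.

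For part (i), the strategy is to combine Corollary \ref{thm23} with an inductive use of the regularity condition, and with the structure already supplied by Proposition \ref{firstobs}. Suppose first $\x_0=\overline{\R(T-I)^k}$ for some $k>m$. The idea is that the operator $S_n:=\sum_{l=0}^{k-m}(-1)^l\binom{k-m}{l}T_{n+ln_0}$ is, modulo sequences tending to $0$ on $\R(T-I)^m$, equal to $T_n(I-T^{\,n_0})^{k-m}$ up to reindexing — more precisely, iterating \eqref{reg} one gets $T^{j n_0}T_n(T-I)^m x - T_{n+jn_0}(T-I)^m x\to 0$ for each fixed $j$, so that $S_n(T-I)^m x$ is asymptotically $\bigl(\sum_l(-1)^l\binom{k-m}{l}T^{ln_0}\bigr)T_n(T-I)^m x = (I-T^{n_0})^{k-m}T_n(T-I)^m x$. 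Now $(I-T^{n_0})^{k-m}=(I-T)^{k-m}P(T)$ for a polynomial $P$, and since $\{T_n\}\subset\kappa(T)$ is bounded on $\overline{\R(T-I)^m}$ while $P(T)$ maps this space into itself, the condition $\lim_n S_n x=0$ on $\overline{\R(T-I)^m}$ becomes equivalent to $\{T_n\}$ converging strongly to $0$ on $\overline{\R(T-I)^k}=\overline{\R(T-I)^m}\cap\overline{\R(I-T)^{k-m}}\cdot\text{(range of }P(T))$; here one uses the ascent-type consequence \eqref{ascent-conseq} and Corollary \ref{thm23}, together with $\x_0=\overline{\R(T-I)^k}$, to close the loop in both directions. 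The main obstacle I anticipate is precisely bookkeeping this asymptotic identification $S_n\sim(I-T^{n_0})^{k-m}T_n$ and checking that the polynomial factor $P(T)$ is harmless — i.e. that multiplying by $P(T)$ neither enlarges nor shrinks the relevant closed ranges — so that $\lim_n S_n x=0$ on $\overline{\R(T-I)^m}$ is genuinely equivalent to $\lim_n T_n y=0$ on $\overline{\R(T-I)^k}$ rather than merely implying it in one direction.

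Finally, for the direct-sum decomposition $\x_c\cap\overline{\R(T-I)^m}=\x_0\oplus(\n(T-I)\cap\overline{\R(T-I)^m})$, I would proceed as follows. If $x\in\x_c\cap\overline{\R(T-I)^m}$, write $y=\lim_n T_n x$; applying $T$ and using commutativity together with regularity \eqref{reg} (valid on $\R(T-I)^m$, and extended to the closure by the boundedness hypothesis) gives $Ty=\lim_n TT_n x=\lim_n T_{n+n_0}x=y$, so $y\in\n(T-I)\cap\overline{\R(T-I)^m}$. Then $x-y$ satisfies $T_n(x-y)=T_nx-y\to 0$ — here one needs $T_n y\to y$, which holds because $y$ is $T$-fixed and $\sum_j t_{nj}=1$ forces $T_n y=y$ — so $x-y\in\x_0$. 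This shows $\x_c\cap\overline{\R(T-I)^m}\subset\x_0+(\n(T-I)\cap\overline{\R(T-I)^m})$; the reverse inclusion is clear since both summands lie in $\x_c\cap\overline{\R(T-I)^m}$, and the sum is direct because $\x_0\cap\n(T-I)=\{0\}$ (a fixed vector $z$ with $T_n z\to 0$ satisfies $z=T_nz\to 0$). Topological directness then follows from closedness of $\x_0$ (Proposition \ref{firstobs}) and of $\n(T-I)$, together with the boundedness of the projection $x\mapsto y=\lim_n T_n x$ on this space, which is itself a consequence of the uniform boundedness principle.
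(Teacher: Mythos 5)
Your part (ii) and your treatment of the direct-sum decomposition are correct and essentially identical to the paper's proof: $\gamma(Tx)=\limsup_n\|TT_n(T-I)^mx\|=\limsup_n\|T_{n+n_0}(T-I)^mx\|=\gamma(x)$, and the limit $y=\lim_nT_nx$ is a fixed point of $T$ by \eqref{reg}, with $x-y\in\x_0$ and the projection $x\mapsto y$ bounded by $\sup_n\|T_n|\overline{\R(T-I)^m}\|$ (no appeal to the uniform boundedness principle is needed, since boundedness of $\{T_n\}$ on $\overline{\R(T-I)^m}$ is a hypothesis).

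The gap is in the first claim of (i), and it originates in a miscomputation of the iterated regularity condition. From $TT_nx-T_{n+n_0}x\to0$ one gets, for each fixed $j$, that $T^{\,j}T_nx-T_{n+jn_0}x\to0$ on $\R(T-I)^m$ (each application of $T$ shifts the index by $n_0$), \emph{not} $T^{\,jn_0}T_nx-T_{n+jn_0}x\to0$ as you wrote. With the correct iterate one obtains
$$\sum_{l=0}^{k-m}(-1)^l\binom{k-m}{l}T_{n+ln_0}x-(I-T)^{k-m}T_nx\to0\,,\quad x\in\overline{\R(T-I)^m}\,,$$
so the displayed condition is equivalent to $T_n(T-I)^{k-m}x\to0$ on $\overline{\R(T-I)^m}$, i.e.\ to strong convergence of $T_n$ to zero on the dense subset $(T-I)^{k-m}\overline{\R(T-I)^m}$ of $\overline{\R(T-I)^k}$, hence on all of $\overline{\R(T-I)^k}$ by boundedness; Proposition \ref{firstobs} gives the reverse inclusion $\x_0\subset\overline{\R(T-I)^k}$ and the equivalence closes. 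This is exactly the paper's argument. Your incorrect iterate instead produces $(I-T^{n_0})^{k-m}T_n=(I-T)^{k-m}(I+T+\cdots+T^{n_0-1})^{k-m}T_n$, and the extra factor $(I+T+\cdots+T^{n_0-1})^{k-m}$ is genuinely not harmless: its range need not be dense in $\overline{\R(T-I)^m}$ (it is annihilated, for instance, by an eigenvector of $T^*$ at an $n_0$-th root of unity different from $1$), so from the vanishing of $S_nx$ you could only conclude that $\x_0$ contains a possibly strictly smaller subspace than $\overline{\R(T-I)^k}$. You flagged this obstacle yourself but left it unresolved; it simply disappears once the iteration is carried out correctly, and no auxiliary density argument for the polynomial factor is required.
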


\begin{proof} (i) A repeated application of \eqref{reg} gives for $x\in \overline{\R(T-I)^m}$
$$(T-I)^{k-m} T_nx-\sum_{l=0}^{k-m}(-1)^l\binom{k-m}{l}T_{n+ln_0}x\to 0\,,\hspace*{1mm} n\to \infty \,,$$
which proves the first part. Also from \eqref{reg} we see that if $x\in\x_c$ then $$\lim_{n\to\infty}T_nx\in \n(T-I)\,,$$
which together with Proposition \ref{firstobs} proves that  $\x_c\cap\overline{\R(T-I)^m}$ is the algebraic  direct sum
of $\x_0$ and $\n(T-I)\cap\overline{\R(T-I)^m}$. If $x\in \x_0,~ y \in \n(T-I)\cap\overline{\R(T-I)^m}$ then we can choose $n$ such that $$\|T_n x\|<\|x-y\|\,.$$

Since $T_ny=y$ we obtain
$$\|y\|\le \|T_nx\|+\|T_n(y-x)\|< \|y-x\|(1+\sup_n\|T_n|\overline{\R(T-I)^m}\|)\,,$$
and (i) follows.

To see (ii) just apply again  \eqref{reg} to conclude that $\gamma(Tx)=
\gamma(x)$, and this assures that $V_{\gamma}$ is an isometry.
\end{proof}

One motivation for the condition \eqref{reg} is a result in [Ku] which asserts that for $m=0,~n_0=0,1$, this condition  is necessary
for the strong convergence of $\{T_n\}$ to the {\it ergodic projection} $P_T$ of $T$, that is $P_T \in \B$ with $\n(T-I)=\R(P_T)$ and $\overline{\R(T-I)}=\n(P_T)$.  More precisely,  the result in  [Ku] states that $T_n \to P_T$ strongly if and only if $\{T_n\}$ satisfies \eqref{reg} with $m=0$, $n_0=0,1$ and
\begin{equation}\label{almostconvergence}
\lim_{k\to \infty} \sup_{n\in \N} \|\frac{1}{k+1}\sum_{j=0}^k T_{n+j} x -P_Tx\|=0, \quad x\in \x.
\end{equation}
The last condition  is called  almost convergence and was introduced by G. G. Lorentz in [L].  One can obtain similar characterizations
 of the strong convergence of $\{T_n\}$  for $m=0$ and $n_0>1$, by using \eqref{reg} and an appropriate almost convergence of step $n_0>1$.

\subsection{Some examples}
A  large variety of examples of sequences satisfying \eqref{reg} arise from matrix summability methods (see [B]), more precisely, by applying such matrices to the sequence $\{T^n\}$.
The simplest choice of such matrices is the identity which gives the sequence $\{T^n\}$. It satisfies \eqref{reg} with $n_0=1$. The so-called Zweier matrix (see [B]) leads to the sequence $\{T_n\}$ with
\begin{equation}\label{zweier} T_n=\frac1{2}(T^{n-1}+T^n), \quad n\ge 1 \end{equation}
which also satisfies \eqref{reg} with $n_0=1$, that is $\{T_n\}$ is regular. Remark that $\{T_n\}$ is ergodic if and only if $\{T^n\}$ converges
strongly on $\R(T^2-I)$.

The most common examples considered in this area are
 the {\it Ces\`aro means of order} $p\in \N$ of an
operator $T\in \B$, which are the operators $M_n^{(p)}(T)$defined for $n\in
\N$ by $ : M_0^{(p)}(T)=I$,
$M_n^{(0)}(T)=T^n$, while for $n,p\ge 1$ as
\begin{align}\label{0} M_n^{(p)}(T) &= \frac{p}{(n+1)...(n+p)}\sum_{j=0}^n\frac{(j+p-1)!}{j!}M_j^{(p-1)}(T)\\&\nonumber
=\frac{p}{(n+1)...(n+p)}\sum_{j=0}^n \frac{(n-j+p-1)!}{(n-j)!}T^j\\&\nonumber
=\frac{p}{n+p}
\sum_{j=0}^n \prod_{k=1}^{p-1} (1-\frac{j}{n+k})T^j
\end{align}
In the case $p=1$ we write
$$
M_n(T):=M_n^{(1)}(T)=\frac{1}{n+1}\sum_{j=0}^n T^j
$$
and it is usually called the {\it Ces\`aro mean} of $T$.

If $\{M_n^{(p)}(T)\}$ is bounded or it strongly converges in $\B$, we say that $T$ is {\it $p$-Ces\`aro bounded (ergodic)}, respectively $T$ is {\it Ces\`aro bounded (ergodic)} when $p=1$.

These operator means have a considerably richer algebraic structure.
They satisfy  the following recurrent identities valid for
  $n,p\ge1$:
\begin{equation}\label{1}
 \qquad M_n^{(p)}(T)(T-I)= \frac{p}{n+1} (M_{n+1}^{(p-1)}(T)-I)\,,
\end{equation}
\begin{equation}\label{2}
TM_n^{(p)}(T)=\frac{n+p+1}{n+1}M_{n+1}^{(p)}(T)-\frac{p}{n+1}I\,,
\end{equation}
\begin{equation}\label{3}
 \frac{n+p+1}{n+1}
M_{n+1}^{(p)}(T)-M_n^{(p)}(T)=\frac{p}{n+1} M_{n+1}^{(p-1)}(T)\,.
\end{equation}
It is easy to see by the relations \eqref{1} and \eqref{2} before that the regularity of $\{M_n^{(p)}(T)\}$ just means the ergodicity of $\{M_n^{(p+1)}(T)\}$. In addition, if $\{M_n^{(p)}(T)\}$ is ergodic it is also regular (by using \eqref{3}), hence each bounded mean $\{M_n^{(p)}(T)\}$ satisfies \eqref{reg} with $m=0$ and $n_0=1$.

It is interesting to remark that these means satisfy the regularity assumption in the operator norm when they are bounded on $\overline{\R(T-I)^m}$.

Another example of a sequence $\{T_n\}$  in $\kappa(T)$  is given by the {\it binomial means}
\begin{equation}\label{dykema}
T_n= 2^{-n}\sum_{k=0}^n\binom{n}{k}T^k=S^n.
\end{equation}
where $S=\frac{1}{2}(T+I)$. Clearly, $\R(T-I)^m=\R(S-I)^m$ for $m\ge 0$, and $\{T_n\}$ satisfies \eqref{reg} for some integer $n_0\ge1$ if and only if it is ergodic. For example, as it is proved in [DyS], every contraction on a Hilbert space will satisfy \eqref{reg}.

Clearly, one may consider also continuous summability methods, for example, the {\it Abel means} defined as
\begin{equation}\label{abelsum}
A(T,r)=(1-r)\sum_{j\ge 0}r^jT^j=(1-r)(I-rT)^{-1}\,,\quad 0<r<1\,,\end{equation}
where $T\in \B$ has spectral radius at most one. Using the equality
$$(I-rT)(I-\rho T)^{-1}= I+(\rho-r)(I-\rho T)^{-1}\,,$$
it is easy to verify that the above sums are bounded if and only if the sequence $\{T_n\}$ with $T_n=A(T,1-\frac1{n})$ is bounded. Moreover, since $\frac{A(T,r)}{1-r}=(1-rT)^{-1}$ we have
$$T_n(T-I)=\frac{1}{n-1}(T_n-I)$$
and it is again easy to see that $$
TT_n-T_{n+1}=\frac{1}{(n+1)^2}\left(\frac{n(n+3)}{n-1}T_n+I\right).
$$
Hence the sequence $\{T_n\}$ is ergodic if and only if it is regular, and this equivalently means that $\frac{1}{n}T_n\to 0$ strongly on $\x$. In this case $\{T_n\}$ also satisfies \eqref{reg} for any positive integer $n_0$ (with $m=0$) even in operator norm if it is bounded.

Finally, one can deal with more complicated continuous summability methods as well. If $F$ is analytic in the unit disc $\D$ and has positive Taylor coefficients, then for any operator $T\in \B$ with spectral radius at most $1$, and any strictly increasing sequence $\{r_n\}$ tending to $1$, the identity
 \begin{equation}\label{powerseries}T_n=\frac1{F(r_n)}F(r_nT)\end{equation}
 defines a sequence in $\kappa(T)$. Of course, the regularity of such sequences is more involved and depends on the Taylor coefficients of $F$, as well as on the choice of $\{r_n\}$.

\subsection{The backward iterate}
Given a sequence $\{T_n\}$ in $\kappa(T)$ with $T_n=\sum_{j\ge 0}t_{nj}T^j$, where
\begin{equation}\label{backit1}t_{n0}\ne 1\,,\quad\sum_{j\ge 1}jt_{nj}<\infty\,\end{equation}
for all $n\in \N$, its {\it backward iterate} $\{T_n^{(-1)}\}$ is defined by
\begin{equation}\label{backit2}T_n^{(-1)}=\sum_{k\ge 0}s_{nk}T^k\,,\quad \text{where}\quad s_{nk}=\frac{\sum_{j\ge k+1}t_{nj}}{\sum_{j\ge 1}jt_{nj}}\,.\end{equation}
Then $T_n^{(-1)}$ belongs to $\kappa(T)$ as well, and satisfies the identity
\begin{equation}\label{backitid}T_n^{(-1)}(T-I)=\left(\sum_{j\ge 1}jt_{nj}\right)^{-1}(T_n-I)\,.\end{equation}

It is a simple exercise to verify that if $T_n=M_n^{(p)}(T)$ then by \eqref{1}, for $n>1$, we have $T_n^{(-1)}=M_{n-1}^{(p+1)}(T)$.

Another direct computation reveals that for the discretized Abel means introduced before $T_n^{(-1)}$ coincides with $T_n$.

Also, if $T_n$ is a sequence of the form \eqref{powerseries} then $T_n^{(-1)}=G_n(T)$, where
\begin{equation}\label{powerseriesback} G_n(z)=\frac{F(r_nz)-F(r_n)}{r_nF'(r_n)(z-1)}\,.\end{equation}

Similarly, for the sequence $\{T_n\}$ given by \eqref{zweier} we have for $n\ge 2$,
\begin{eqnarray}\label{ec36}
T_n^{(-1)}&=&\frac{2}{2n-1}\left(\sum_{k=0}^{n-2}T^k +\frac1{2}T^{n-1}\right)\\
&=&\frac{2}{2n-1}(nM_{n-1}(T)-\frac{1}{2}T^{n-1}).\notag
\end{eqnarray}

Also, it is easy to verify that $T_n^{(-1)}$ is ergodic if and only if $\{M_n(T)\}$ converges strongly to zero on $\R(T^2-I)$, and it is regular if and only if $\frac{1}{n}T_n^{(-1)}\to 0$ strongly on $\x$. In this case, one can see that if $\{T_n^{(-1)}\}$ is ergodic then it is also regular, but not conversely. In particular, the ergodicity, or regularity of $\{T_n^{(-1)}\}$ are assured by the ergodicity of $\{M_n(T)\}$, respectively of $\{M_n^{(2)}(T)\}$. In fact, $\{M_n(T)\}$ is ergodic if and only if $\{T_n^{(-1)}\}$ is ergodic and $\frac{1}{n}T^n(T-I)x\to 0$ for $x\in \x$. On the other hand, $\{M_n^{(2)}(T)\}$ is ergodic if and only if $\{T_n^{(-1)}\}$ is regular and $\frac{1}{n^2}T^nx\to 0$ for $x\in \x$. Remark that in these last remarks, the conditions of ergodicity or regularity of $\{T_n^{(-1)}\}$ are superfluous, if the means $M_n(T)$ or $M_n^{(2)}(T)$ are bounded, respectively.

Finally, we note that in certain cases the regularity condition  can imply ergodicity. Such an example is the sequence $\{T_n\}$ with $T_n=M_n(T)^2$. We omit  the details.

\section{Operators from linear dynamics}

In this section we refer to two classes of operators related to linear dynamics, which have been considered in [AB], [BM], [BC], [GLM], [He] and [K\'e].

 Recall that an operator $T\in \B$ is called {\it supercyclic} if there exists $x\in \x$ such that the set $\{\lambda T^nx: \lambda\in \C, \hspace*{1mm} n\in \N\}$ is dense in $\x$. In this case, any
such vector $x$ is called a {\it supercyclic vector} for $T$.

According to the terminology of [BC] we say that $T\in \B$ is {\it (weakly) hypercyclic with support $N$} ($N\in \N\setminus\{0\}$) if
there exists $x_0\in \x$ such that the set
\begin{equation}\label{hypercyclic}
\{ T^{k_1}x_0+T^{k_2}x_0+...+T^{k_N}x_0: k_1,...,k_N\ge 0\}
\end{equation}
is (weakly) dense in $\x$. Clearly, these concepts only make sense when $\x$ is separable.
The motivation for considering such conditions is   the Ansari-Burdon theorem ([AB], Theorem 2.2) which asserts that any  supercyclic power bounded operator $T$ must satisfy the condition
$$\lim_{n\to\infty}\|T^nx\|=0\,,\quad x\in \x\,.$$
Let us start with the weakly hypercyclic case and record the following simple observation.

\begin{lemma}\label{le31}
 Let $\y$  be a separable Banach space and let  $V\in \mathcal{B} (\y)$ be a contraction. If  $\lambda V$ is
weakly hypercyclic with support $N$, for some $\lambda\in \C$ with $|\lambda|\le1$,  then $\y=\{0\}$.
\end{lemma}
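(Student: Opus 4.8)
The plan is to derive a contradiction from the assumption $\y\neq\{0\}$ by exploiting the contractivity of $V$ together with the weak density of the hypercyclic orbit. First I would fix a nonzero $y\in\y$ and a weakly hypercyclic vector $x_0$ for $\lambda V$ with support $N$, so that the set $\{\lambda^{k_1}V^{k_1}x_0+\dots+\lambda^{k_N}V^{k_N}x_0:k_1,\dots,k_N\ge 0\}$ is weakly dense in $\y$. Since $V$ is a contraction and $|\lambda|\le 1$, every element $z$ of this set satisfies $\|z\|\le N\|x_0\|$; hence the whole set lies in the closed ball of radius $N\|x_0\|$. A weakly dense subset of a ball of radius $R$ forces $\y$ itself to coincide with (the norm closure of) that ball, because the closed ball is weakly closed (it is convex and norm-closed). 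This already shows $\y$ is bounded, which is absurd unless $\y=\{0\}$.

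To make this precise I would argue as follows: the closed ball $B_R=\{z\in\y:\|z\|\le R\}$ with $R=N\|x_0\|$ is convex and norm-closed, hence weakly closed; since it contains a weakly dense subset of $\y$, its weak closure is all of $\y$, so $\y=B_R$. But if $\y\neq\{0\}$, pick $y\in\y$ with $\|y\|=R$ (possible since $\y=B_R$ and, scaling, all of $2y\in\y$ would then need $\|2y\|\le R$, contradiction); more directly, $\y=B_R$ means $\y$ is bounded in norm, which is impossible for a nonzero normed space since it is closed under scalar multiplication. Therefore $\y=\{0\}$.

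I expect the only point requiring a little care to be the passage from ``weakly dense subset of the ball'' to ``the space equals the ball'': one must invoke that norm-closed convex sets are weakly closed (a consequence of Hahn--Banach / Mazur's theorem), so that the weak closure of the orbit is contained in $B_R$, while weak density gives that this weak closure is $\y$. Everything else — the uniform bound $\|z\|\le N\|x_0\|$ on orbit elements, and the observation that a nonzero normed space cannot be norm-bounded — is immediate. No separability beyond what is already assumed is actually needed for this argument; separability is only part of the ambient hypothesis making ``weakly hypercyclic'' meaningful.
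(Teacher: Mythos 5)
Your proposal is correct and follows essentially the same route as the paper: both arguments bound every element of the orbit set by $N\|x_0\|$ using $\|V\|\le 1$ and $|\lambda|\le 1$, and then conclude that a subset of a closed ball cannot be weakly dense in a nonzero space. Your write-up merely makes explicit (via Mazur's theorem, that norm-closed convex sets are weakly closed) the step the paper leaves implicit.
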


\begin{proof}  If $x_0\in \y$, the set in \eqref{hypercyclic} is contained in the ball centered at the origin and of radius $N\|x_0\|$ (as $\|V\|\le 1$), hence it cannot be weakly dense in $\y$, unless $\y=\{0\}$.
\end{proof}

Throughout in what follows, for a bounded sequence $\{T_n\}$ in $\kappa(T)$ we let $\gamma$ the seminorm from \eqref{gamma} with $m=0$, and consider the operator $V_\gamma$ obtained by an application of Proposition \ref{secondobs} to this seminorm. As a consequence of Lemma \ref{le31} we have

\begin{corollary}\label{hyper}
Assume  that  $\lambda T$ is weakly  hypercyclic with support $N$, for some $\lambda\in \C$ with $|\lambda|\le 1$. If the sequence $\{T_n\}$ in $\kappa(T)$ is bounded and the operator  $V_\gamma$  is a contraction then $\{T_n\}$ converges strongly to zero on $\x$. In particular, this holds when $\{T_n\}$ satisfies  \eqref{reg} with $m=0$ and $n_0\ge 0$.
\end{corollary}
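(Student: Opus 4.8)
The plan is to apply Proposition \ref{secondobs} and Lemma \ref{le31} to the quotient space $\x_\gamma$. First I would record that since $\{T_n\}$ is bounded on $\x$ (take $m=0$), the seminorm $\gamma(x)=\limsup_n\|T_nx\|$ from \eqref{gamma} is a well-defined continuous seminorm, and Proposition \ref{secondobs} produces an operator $V_\gamma\in\mathcal B(\x_\gamma)$ with $Q_\gamma T=V_\gamma Q_\gamma$. By hypothesis $V_\gamma$ is a contraction. Next I would observe that the intertwining relation iterates to $Q_\gamma T^n=V_\gamma^n Q_\gamma$, and hence, since $Q_\gamma$ is a bounded linear map, $\lambda V_\gamma$ inherits weak hypercyclicity with support $N$ from $\lambda T$: if $x_0$ is a vector realizing weak density of the set \eqref{hypercyclic} for $\lambda T$ in $\x$, then $Q_\gamma x_0$ realizes it for $\lambda V_\gamma$ in $\x_\gamma$, because $Q_\gamma$ has dense range $\overline{\R(Q_\gamma)}=\x_\gamma$ and is weak-to-weak continuous, so it carries weakly dense sets to weakly dense sets (the $N$-fold sums are preserved by linearity: $Q_\gamma(T^{k_1}x_0+\cdots+T^{k_N}x_0)=(\lambda V_\gamma)^{k_1}Q_\gamma x_0+\cdots$ up to the $\lambda$-powers).

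Then Lemma \ref{le31}, applied to $\y=\x_\gamma$ and the contraction $V_\gamma$ with $|\lambda|\le 1$, forces $\x_\gamma=\{0\}$. By construction of $\x_\gamma$ as the completion of $\x_{/\n(\gamma)}$ this means $\n(\gamma)=\x$, i.e. $\gamma\equiv 0$ on $\x$, which is precisely $\limsup_n\|T_nx\|=0$ for every $x\in\x$; that is, $\{T_n\}$ converges strongly to zero. For the final assertion, when $\{T_n\}$ satisfies \eqref{reg} with $m=0$ and some $n_0\ge 0$, part (ii) of Theorem \ref{regobs} tells us that $V_\gamma$ is an isometry (here using that each $T_n\in\kappa(T)$ commutes with $T$ and $\{T_n\}$ is bounded), and in particular a contraction, so the previous paragraph applies verbatim. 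One should double-check the degenerate case $n_0=0$: then \eqref{reg} reads $\|(T T_n - T_n)x\|=\|(T-I)T_n x\|\to 0$, and since $T_n\in\kappa(T)$ commutes with $T$ this is $\|T_n(T-I)x\|\to 0$; the argument in the proof of Theorem \ref{regobs}(ii) that $\gamma(Tx)=\gamma(x)$ still goes through, so $V_\gamma$ is still an isometry.

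The only genuinely delicate point is the transfer of weak hypercyclicity through $Q_\gamma$: one needs that the image under a bounded operator with dense range of a weakly dense set is again weakly dense. This holds because $Q_\gamma$ is weak-to-weak continuous (every bounded operator is), so the preimage under $Q_\gamma^{**}$-type arguments is unnecessary — directly, if $G\subset\x$ is weakly dense and $U$ is a nonempty weakly open subset of $\x_\gamma$, then $Q_\gamma^{-1}(U)$ is weakly open in $\x$ and nonempty (since $\R(Q_\gamma)$ is dense and $U$ is open, though one must be slightly careful: weak openness of $Q_\gamma^{-1}(U)$ is immediate, and nonemptiness follows because $U\cap\R(Q_\gamma)\ne\emptyset$ as $\R(Q_\gamma)$ is norm-dense hence weakly dense), so it meets $G$, and therefore $U$ meets $Q_\gamma(G)$. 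I would state this as a one-line remark rather than belabor it. Everything else is bookkeeping: the iteration of the intertwining identity and the identification $\x_\gamma=\{0\}\iff\gamma\equiv 0\iff T_n\to 0$ strongly.
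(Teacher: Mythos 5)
Your proposal is correct and follows essentially the same route as the paper: transfer the weak hypercyclicity with support $N$ from $\lambda T$ to $\lambda V_\gamma$ through the intertwining $Q_\gamma T=V_\gamma Q_\gamma$ (using that $Q_\gamma$ is weak-to-weak continuous with dense range, which is exactly what the paper expresses by saying $Q_\gamma^*$ maps $\x_\gamma^*$ into $\x^*$), then invoke Lemma \ref{le31} to get $\x_\gamma=\{0\}$, i.e.\ $\gamma\equiv 0$ and $T_nx\to 0$. Your extra care with the $n_0=0$ case and the weak-density transfer only makes explicit what the paper leaves implicit.
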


\begin{proof} The assumption on $\lambda T$ assures for the operators $V_{\gamma}$ and $Q_{\gamma}$ given by (the proof of) Proposition \ref{secondobs} that if $x_0$ is the vector in \eqref{hypercyclic}, the set
\begin{align*}&\{Q_\gamma(\lambda T)^{k_1}x_0+Q_\gamma(\lambda T)^{k_2}x_0+...+Q_\gamma(\lambda T)^{k_N}x_0: k_1,...,k_N\ge
0\}\\&=\{ \lambda V_\gamma)^{k_1}Q_\gamma x_0+(\lambda V_\gamma)^{k_2}Q_\gamma x_0+...+(\lambda V_\gamma)^{k_N}
Q_\gamma x_0: k_1,...,k_N\ge
0\}\end{align*}
is weakly dense in $\x_\gamma$, because $Q_\gamma^*$ maps $\x_\gamma^*$ into $\x^*$. Thus    $\lambda V_\gamma$ is   hypercyclic with support $N$, and by the previous lemma we must have $\x_\gamma=\{0\}$ which means $T_nx \to 0$ for $x\in \x$.
\end{proof}

Let us now turn to supercyclic operators $T$, where  the situation is more subtle. This is due to the fact that for a supercyclic operator $T$, its adjoint $T^*$ may have eigenvalues.  A simple consequence of the definition (see also [BM]) shows that such an eigenvalue is unique, and the corresponding eigenspace has dimension one. Due to this fact  the Ansari-Bourdon theorem does not extend directly to more general means, in particular Corollary \ref{hyper} does not necessarily hold for supercyclic operators. In fact, in Subsection 5.1 we shall construct a Ces\`aro (even uniformly Kreiss) bounded and supercyclic operator $T$ such that both $T$ and $T^*$ have the eigenvalue $1$, hence $\{M_n(T)\}$ does not converge strongly to zero. It could also happens that $T^*$ has an eigenvalue, but $T$ does not.

The following theorem describes the possibilities that can occur in the general context, where the condition \eqref{reg} is essential.

\begin{theorem}\label{supercyclic}
Let $T\in \B$ be supercyclic and assume that the sequence $\{T_n\}\subset \kappa(T)$ is bounded and satisfies condition \eqref{reg} for some $n_0\ge 0$.
If $T^*$ has no eigenvalues on the unit circle then $\{T_n\}$ converges strongly to zero on $\x$, that is $\x_0=\x$.

If $T^*$ has the (unique)  eigenvalue $\mu \in \T$  then :\\
(i) $\{T_ny\}$ converges to zero for some $y\in \x\setminus\overline{\R(T-\mu I)}$ if and only if $\x_0=\x$. In this case $\mu\ne 1$. \\
(ii) $\{T_ny\}$ converges to a nonzero limit for some $y\in \x\setminus\overline{\R(T-\mu I)}$ if and only if $\x_0\ne \x_c=\x$. This is further equivalent to the fact that both $T$ and $T^*$ have the eigenvalue $1$.\\
(iii) $\{T_ny\}$ diverges  for some (and hence for all) $y\in \x\setminus\overline{\R(T-\mu I)}$ if and only if $\x_0=\x_c\ne \x$.
 \end{theorem}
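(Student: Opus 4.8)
The plan is to exploit the isometry produced by the regularity condition. I would put $\gamma$ equal to the seminorm \eqref{gamma} with $m=0$ and consider the operators $V_\gamma\in\mathcal B(\x_\gamma)$, $Q_\gamma\in\mathcal B(\x,\x_\gamma)$ from Proposition \ref{secondobs}, so that $Q_\gamma T=V_\gamma Q_\gamma$; because \eqref{reg} holds with $m=0$, Theorem \ref{regobs}(ii) makes $V_\gamma$ an isometry. I would use repeatedly that $\x_0=\n(\gamma)=\n(Q_\gamma)$ and that $\x_c=\x_0\oplus\n(T-I)$ is a topological direct sum (Theorem \ref{regobs}(i) with $m=0$). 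The first observation is that $V_\gamma$ is supercyclic as soon as $\x_\gamma\neq\{0\}$: then $\n(\gamma)$ is a proper closed subspace, hence cannot contain the dense set of supercyclic vectors of $T$, so some supercyclic $x$ has $Q_\gamma x\neq0$, and $\{\lambda V_\gamma^nQ_\gamma x\}=Q_\gamma\{\lambda T^nx\}$ is dense in $\x_\gamma$.

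The decisive — and hardest — step is the structural dichotomy: a supercyclic isometry can only act on a space of dimension $\le 1$. Since an isometry is power bounded, if $\dim\x_\gamma=\infty$ the Ansari--Bourdon theorem [AB] would force $V_\gamma^n\to0$ strongly, contradicting $\|V_\gamma^nz\|=\|z\|$; and by classical facts [BM] there are no supercyclic operators on finite-dimensional spaces of dimension $\ge 2$. (This is exactly the isometry-intertwining sharpening of Ansari--Bourdon announced in the introduction.) Hence either $\x_\gamma=\{0\}$ — equivalently $\gamma\equiv0$, i.e.\ $\x_0=\x$ — or $\x_\gamma\cong\C$ and $V_\gamma$ is multiplication by a unimodular scalar $\nu$. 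In the second case I would set $f:=Q_\gamma^*1\in\x^*$; then $f\neq0$, $\ker f=\n(Q_\gamma)=\x_0$, and applying $Q_\gamma^*$ to $Q_\gamma T=V_\gamma Q_\gamma$ gives $T^*f=\nu f$, so $\nu\in\sigma_p(T^*)\cap\T$; from $\R(T-\nu I)^{\perp}=\n(T^*-\nu I)=\C f$ one obtains $\x_0=\ker f=\overline{\R(T-\nu I)}$, a hyperplane. This already settles the first assertion: when $\sigma_p(T^*)\cap\T=\emptyset$ the second alternative is impossible, so $\x_0=\x$.

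For the three-part statement suppose $\sigma_p(T^*)\cap\T=\{\mu\}$ with eigenvector $f$, so $\overline{\R(T-\mu I)}=\ker f$ is a hyperplane, and by uniqueness of the eigenvalue the scalar $\nu$ above equals $\mu$; thus $\x_0\in\{\x,\ \overline{\R(T-\mu I)}\}$. Two computations drive the case analysis. If $\mu=1$, then $\langle T_nx,f\rangle=(\sum_jt_{nj})\langle x,f\rangle=\langle x,f\rangle$ for all $n$, so $\gamma(x)\ge\|f\|^{-1}|f(x)|$; hence $\gamma\not\equiv0$ and $\x_0=\overline{\R(T-I)}\subsetneq\x$. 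If $\mu\neq1$, then each $e\in\n(T-I)$ satisfies $\langle e,f\rangle=\langle Te,f\rangle=\mu\langle e,f\rangle$, so $e\in\ker f$; combined with $\x_0\in\{\x,\ker f\}$ this yields $\n(T-I)\subseteq\x_0$, and the directness of $\x_c=\x_0\oplus\n(T-I)$ forces $\n(T-I)=\{0\}$. Consequently $1\in\sigma_p(T)$ implies $\mu=1$, whence $\x_0=\overline{\R(T-I)}$, and choosing $e\in\n(T-I)\setminus\x_0$ (possible by directness) gives $\x_c\supseteq\overline{\R(T-I)}\oplus\C e=\x$.

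Putting this together, the only possible types are $(\x_0,\x_c)\in\bigl\{(\x,\x),\ (\overline{\R(T-\mu I)},\overline{\R(T-\mu I)}),\ (\overline{\R(T-\mu I)},\x)\bigr\}$; the first occurs only when $\mu\neq1$, the third exactly when $1\in\sigma_p(T)$, i.e.\ when both $T$ and $T^*$ have the eigenvalue $1$ (using $1\in\sigma_p(T)\Rightarrow\mu=1\Rightarrow1\in\sigma_p(T^*)$ and conversely), and the remaining possibility is the second. Finally, for $y\notin\overline{\R(T-\mu I)}=\ker f$ the sequence $\{T_ny\}$ tends to $0$ in the first type ($y\in\x=\x_0$), converges to a nonzero limit in the third ($y\in\x_c\setminus\x_0$), and diverges in the second ($y\notin\x_c$). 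Since for $y$ outside the hyperplane $\overline{\R(T-\mu I)}$ these three behaviours are mutually exclusive and exhaustive, reading the correspondence in both directions yields each equivalence in (i)--(iii) together with the extra claim $\mu\neq1$ in (i). The only genuinely delicate ingredient is the dimension dichotomy for $\x_\gamma$; the remainder is bookkeeping with the isometry $V_\gamma$, the hyperplane $\ker f$, and the decomposition of Theorem \ref{regobs}.
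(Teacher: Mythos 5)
Your proof is correct and follows essentially the same route as the paper: the seminorm $\gamma$ with $m=0$, the intertwining isometry $V_\gamma$, the supercyclicity of $V_\gamma$ together with Ansari--Bourdon forcing $\dim\x_\gamma\le 1$, the identification of $\x_0$ with $\overline{\R(T-\mu I)}$ in the nontrivial case, and the decomposition $\x_c=\x_0\oplus\n(T-I)$ from Theorem \ref{regobs}. The only difference is presentational: you track the eigenfunctional $f=Q_\gamma^*1$ and enumerate the possible pairs $(\x_0,\x_c)$, whereas the paper writes $T_n$ as a $2\times 2$ matrix with respect to $\overline{\R(T-\mu I)}\oplus\C y$ and analyses the scalars $\tau_n(\mu)=\sum_{j}t_{nj}\mu^j$ --- equivalent bookkeeping for the same argument.
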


\begin{proof}  Let $\gamma$ be the seminorm given by $(2.2)$ with $m=0$. From the fact that $T$ is supercyclic it follows immediately that the isometry $V_{\gamma}$ on $\x_{\gamma}$ given by condition \eqref{reg} and Theorem \ref{regobs} is supercyclic, as well. Thus, by the Ansari-Bourdon theorem mentioned before, we have $\dim \x_{\gamma} \le 1$. Clearly, $\x_{\gamma}=\{0\}$ means $\x_0=\x$ and this certainly occurs when $T^*$ has not eigenvalues on $\T$, as we see below.

Suppose that $\dim \x_{\gamma}=1$ . Then there exists $\mu\in \T$ such that
$$V_\gamma [x]=\mu[x]\,,\quad [x]=x+\n(\gamma), \quad x\in \x\,,$$
and since $\n(\gamma)=\x_0$, we obtain that $\{T_n(T-\mu I)\}$ converges strongly to zero, i.e. $\{T_n\}$ converges strongly to zero on $\overline{\R(T-\mu I)}$. If $T^*$ has no eigenvalues on $\T$ we have  $\overline{\R(T-\mu I)}=\x$ and this is a contradiction which shows that $\x_0=\x$.\\
Suppose that $T^*$ has the eigenvalue $\mu\in \T$. As pointed out above,
the corresponding eigenspace has dimension  one. Thus for every  $y\in \x\setminus\overline{\R(T-\mu I)}$
 there exists $f\in X^*,~T^*f=\mu f\neq 0$, such that $$x-f(x)y\in \overline{\R(T-\mu I)}\,,\quad x\in \x\,.$$
With respect to the direct sum decomposition $\x=\overline{\R(T-\mu I)}\oplus \C y$ the operator  $T$  has the matrix representation
$$T=
\begin{pmatrix}
T|\overline{\R(T-\mu I)} & (T-\mu I)|\C y \\
0 & \mu I_2
\end{pmatrix},
$$
where $I_2$ denotes the identity on $\C y$. A simple computation shows that
$$T^n=
\begin{pmatrix}
T^n|\overline{\R(T-\mu I)} & (T^n-\mu^n I)|\C y \\
0 & \mu^n I_2
\end{pmatrix},
$$
so that, if $$T_n=\sum_{j\ge 0} t_{nj}T^j\,,$$ and we denote $$\tau_n(\mu)= \sum_{j\ge 0} t_{nj}\mu^j\,,$$
we obtain
$$T_n=
\begin{pmatrix}
T_n|\overline{\R(T-\mu I)} & (T_n-\tau_n(\mu) I)|\C y \\
0 & \tau_n(\mu) I_2
\end{pmatrix}.
$$

(i) If we can choose $y$ as above such that $T_ny\to 0$, then from
\begin{equation*}\label{taun}f(T_n y)=\tau_n(\mu)f(y),\end{equation*}
we see that $\tau_n(\mu)\to 0$ and we obtain that  $\{T_n\}$ converges strongly to zero on $\x$. The converse is obvious. Note also that  since  $\tau_n(\mu)\to 0$ and $\tau_n(1)=1$, we must have $\mu\ne 1$.

(ii) We show  the equivalence of the first and the third assertion.   If we can choose $y$ such that $T_ny\to y_0\ne 0$, then by \eqref{reg} we have
$$Ty_0=\lim_{n\to\infty}T_nTy=\lim_{n\to\infty}T_{n+n_0}y=y_0\,.$$ from the fact that $T_n(T-\mu I)y\to 0$ we deduce that
$Ty_0=\mu y_0$, hence $\mu=1$  and from above, $1$ is an eigenvalue for $T$. Conversely, if $\mu=1$ and $(T-I)y_0=0, y_0\ne 0$, then
$$T_ny_0=\tau_n(1)y_0=y_0\,,$$
i.e. $y_0\in  \x\setminus\overline{\R(T-\mu I)}$ and $\{T_ny_0\}$ converges to $y_0$. Moreover, this assumption implies
$$T_n=
\begin{pmatrix}
T_n|\overline{\R(T- I)} & 0 \\
0 &  I_2
\end{pmatrix} \to
\begin{pmatrix}
0 & 0 \\
0 &  I_2
\end{pmatrix}\,,
$$
in the strong operator topology, that is $\x_0\ne \x_c=\x$.  If the last relation holds it is obvious  that we can choose $y$ as in the statement.\\
(iii) follows immediately from the fact that   $\{T_n\}$ converges strongly to zero on $\overline{\R(T-\mu I)}$ and that this space has codimension one.
\end{proof}

If one of the alternatives (i) or (ii) occur then $\{T_n(T-I)\}$ converges strongly to zero, that is $\{T_n\}$ satisfies the regularity assumption \eqref{reg} with $m=n_0=0$. Some interesting examples appear in the study of rotational invariant conditions, like Kreiss, or uniform Kreiss boundedness, and will be discussed in the last section.

Within this class of operators the situation becomes more transparent, as the following result shows.

\begin{corollary}\label{supercyclicse}
Let $T\in \B$ be supercyclic and we assume that the sequence $\{T_n\}\subset \kappa(T)$ is bounded and ergodic.
If $T^*$ has no eigenvalues on $\T$ then $\{T_n\}$ converges strongly to zero on $\x$ that is $\x_0=\x$.

If $T^*$ has the (unique)  eigenvalue $\mu \in \T$  then :\\
(i) $\x_0=\x$ if and only if $\mu\ne 1$. \\
(ii)  $\x_0\ne \x_c=\x$ if and only if both $T$ and $T^*$ have the eigenvalue $1$.\\
(iii)  $\x_0=\x_c\ne \x$ if and only if $\sigma_p(T^*)\setminus\sigma_p(T)=\{1\}$.
 \end{corollary}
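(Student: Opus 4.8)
The plan is to read everything off Theorem \ref{supercyclic}, using that the stronger hypothesis of ergodicity ($n_0=m=0$) rigidifies the behaviour of $\{T_n y\}$, so that the three alternatives of that theorem turn into a trichotomy phrased purely in terms of the point spectra of $T$ and $T^{*}$.

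First I would dispose of the case in which $T^{*}$ has no eigenvalue on $\T$: this is exactly the first assertion of Theorem \ref{supercyclic}. So assume $T^{*}$ has the unique eigenvalue $\mu\in\T$ and fix $f\in\x^{*}\setminus\{0\}$ with $T^{*}f=\mu f$; since the eigenspace is one-dimensional, $\overline{\R(T-\mu I)}=\n(f)$ has codimension one. Next I would recall from the proof of Theorem \ref{supercyclic} that the isometry $V_\gamma$ associated with the seminorm $\gamma$ of \eqref{gamma} (with $m=0$) acts on a space $\x_\gamma$ of dimension at most one, that $\x_\gamma=\{0\}$ means $\x_0=\x$, and that when $\dim\x_\gamma=1$ one has $V_\gamma[x]=\mu[x]$ and $\overline{\R(T-\mu I)}\subset\x_0$ (the inclusion being trivial when $\x_0=\x$). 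Thus in all cases $T_n\to 0$ strongly on $\overline{\R(T-\mu I)}$; since this subspace has codimension one, $\x_0$ equals either $\overline{\R(T-\mu I)}$ or $\x$, and $\x_c\supset\x_0$ is likewise either $\x_0$ or $\x$. Hence exactly one of the situations $\x_0=\x$, $\x_0\subsetneq\x_c=\x$, $\x_0=\x_c\subsetneq\x$ occurs.

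The computational core is a single limit. Fixing $y$ with $f(y)=1$, so that $f(T_n y)=\tau_n(\mu)f(y)=\tau_n(\mu)$, I would split $(T-I)y=(T-\mu I)y+(\mu-1)y$; the first summand lies in $\overline{\R(T-\mu I)}$, hence $T_n(T-\mu I)y\to 0$, while ergodicity (condition \eqref{reg} with $m=n_0=0$, together with the fact that $T_n$ commutes with $T$) gives $T_n(T-I)y\to 0$, and subtracting yields $(1-\mu)T_n y\to 0$. From this, (i) follows: if $\mu\neq 1$ then $T_n y\to 0$, which combined with $T_n\to 0$ on $\overline{\R(T-\mu I)}$ and the decomposition $\x=\overline{\R(T-\mu I)}\oplus\C y$ gives $\x_0=\x$; conversely, $\x_0=\x$ forces $\tau_n(\mu)=f(T_n y)\to 0$ while $\tau_n(1)=1$ for every $n$, so $\mu\neq 1$ (this last implication is also the closing line of Theorem \ref{supercyclic}(i)).

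For (ii) there is nothing to add: the equivalence that $\x_0\neq\x_c=\x$ holds if and only if both $T$ and $T^{*}$ have the eigenvalue $1$ is already contained in Theorem \ref{supercyclic}(ii). Finally (iii) follows by pure logic from the exhaustive and mutually exclusive trichotomy established above: $\x_0=\x_c\subsetneq\x$ holds precisely when neither $\x_0=\x$ nor $\x_0\subsetneq\x_c=\x$, hence, by (i) and (ii), precisely when $\mu=1$ and it is not the case that both $T$ and $T^{*}$ have the eigenvalue $1$, that is, when $\mu=1$ and $1\notin\sigma_p(T)$; since $\sigma_p(T^{*})=\{\mu\}$ by hypothesis, this is exactly $\sigma_p(T^{*})\setminus\sigma_p(T)=\{1\}$. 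The only genuine analytic step is the short limit $(1-\mu)T_n y\to 0$; everything else is organizational, and I expect the main pitfall to be the final negation juggling in (iii) — correctly converting the statement that both $T$ and $T^{*}$ have the eigenvalue $1$, and its failure, into the stated $\sigma_p$ formula — so I would carry out that step with care.
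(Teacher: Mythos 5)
Your proposal is correct and follows essentially the same route as the paper: everything is read off Theorem \ref{supercyclic}, the only new input being that ergodicity forces $\x_0=\x$ when $\mu\neq 1$, after which (iii) follows by elimination from the exhaustive trichotomy. The paper gets that one new implication slightly more directly — since $\mu\neq 1$ is the unique eigenvalue of $T^*$, $\overline{\R(T-I)}=\x$, and ergodicity plus closedness of $\x_0$ give $\x_0\supset\overline{\R(T-I)}$ — whereas you derive $(1-\mu)T_ny\to 0$ from the decomposition $\x=\overline{\R(T-\mu I)}\oplus\C y$; both arguments are valid and equivalent in substance.
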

\begin{proof}
In view of Theorem \ref{supercyclic} we only need to prove that if $T^*$ has the unique  eigenvalue $\mu \in \T\setminus\{1\}$  then $\x_0=\x$. This is however obvious, since in this case $\overline{\R(T-I)}=\x=\x_0$.
\end{proof}

One reason why the Ansari-Bourdon theorem holds is a result in [BM] which asserts that if $T^*$ has the eigenvalue $\mu$ then
 $T|\overline{\R(T-\mu I)}$ is hypercyclic, hence $T$ cannot be power bounded.  However,  even in the context of power boundedness our method based on the intertwining of $T$ with $V_{\gamma}$ (as in the previous proof) leads to an improvement of the Ansari-Bourdon result.  For a supercyclic power bounded operator $T$ it shows that every sequence in $\kappa(T)$ which satisfies \eqref{reg}  must converge strongly  to zero.

Despite the problems that occur in the general context considered here, one can prove the following interesting extension of the Ansari-Bourdon result.

\begin{theorem}\label{genAB}
Let $T\in \B$ be supercyclic and assume that the sequence $\{T_n\}\subset \kappa(T)$ is bounded and satisfies the regularity assumption \eqref{reg} for some integer $n_0\ge 0$. If there exists a supercyclic vector $x_0\in \x$ satisfying one of the following conditions
$$\inf_{n\in \mathbb{N}}\|T^nx_0\|=0\,,\leqno(i)$$
or $$\sup_{n\in \mathbb{N}}\|T^nx_0\|<\infty\,,\leqno(ii)$$
then $\{T_n\}$ converges strongly to zero on $\x$.
\end{theorem}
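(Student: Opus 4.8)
The plan is to route everything through the first alternative of Theorem~\ref{supercyclic}: I will show that under either hypothesis $(i)$ or $(ii)$ the adjoint $T^*$ has no eigenvalue on $\T$, and then Theorem~\ref{supercyclic} yields $\x_0=\x$ directly, which is the assertion. (As $T$ is supercyclic, $\x$ is infinite-dimensional; this will be used below.) So I would argue by contradiction and assume $T^*f=\mu f$ for some $f\in\x^*\setminus\{0\}$ and $\mu\in\T$. The first step is to observe that $f(x_0)\neq 0$: otherwise $f(\lambda T^nx_0)=\lambda\mu^n f(x_0)=0$ for all $\lambda\in\C$ and $n\ge 0$, so $f$ would vanish on the dense set $\{\lambda T^nx_0:\lambda\in\C,\ n\ge 0\}$ and hence be zero. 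Rescaling $f$ I may assume $f(x_0)=1$; then $f(T^nx_0)=\mu^n$, and consequently
$$\|T^nx_0\|\ \ge\ \frac{|f(T^nx_0)|}{\|f\|}\ =\ \frac{1}{\|f\|}\,,\qquad n\ge 0\,.$$

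Under hypothesis $(i)$ this lower bound already contradicts $\inf_{n}\|T^nx_0\|=0$, and we are done. Under hypothesis $(ii)$ I would put $M=\sup_{n}\|T^nx_0\|<\infty$ and pick $z\in\x$ with $f(z)=0$ and $\|z\|=1$ (possible since $\ker f$, a hyperplane in the infinite-dimensional $\x$, is nonzero). Because $x_0$ is a supercyclic vector, there are scalars $\lambda_k$ and integers $n_k$ with $\lambda_kT^{n_k}x_0\to z$; applying $f$ gives $\lambda_k\mu^{n_k}=f(\lambda_kT^{n_k}x_0)\to f(z)=0$, hence $|\lambda_k|\to 0$, and therefore $\|\lambda_kT^{n_k}x_0\|\le|\lambda_k|M\to 0$ — impossible, since $\|\lambda_kT^{n_k}x_0\|\to\|z\|=1$. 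Either way $T^*$ has no eigenvalue on $\T$, so Theorem~\ref{supercyclic} applies to the bounded, regular sequence $\{T_n\}$ and gives $\x_0=\x$, i.e.\ $\{T_n\}$ converges strongly to zero on $\x$.

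I do not anticipate a genuine obstacle here. The regularity assumption \eqref{reg} and the norm-boundedness of $\{T_n\}$ enter only through Theorem~\ref{supercyclic}; the rest is a short estimate on the single orbit $\{T^nx_0\}$. The one point deserving a little care is the reduction $f(x_0)\neq 0$ together with the subsequent normalization, which is precisely what turns $|f(T^nx_0)|$ into a fixed positive constant and thereby drives both cases. One could equivalently phrase the argument through the isometry $V_\gamma$ of Theorem~\ref{regobs} --- supercyclicity of $T$ forces $\dim\x_\gamma\le 1$, and the estimates above rule out $\dim\x_\gamma=1$ --- but the direct route seems cleanest.
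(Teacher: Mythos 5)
Your argument is correct and takes a genuinely different route from the paper's. You prove that either hypothesis rules out unimodular eigenvalues of $T^*$ --- under (i) because a unimodular eigenvector $f$ yields the lower bound $\|T^nx_0\|\ge |f(x_0)|/\|f\|>0$, and under (ii) because the bounded orbit cannot projectively approach a nonzero vector of $\n(f)$ --- and then you invoke the first alternative of Theorem \ref{supercyclic}. The paper argues directly: for (i) it uses the intertwining $Q_\gamma T=V_\gamma Q_\gamma$ with the isometry $V_\gamma$ to deduce $Q_\gamma x_0=0$ from $T^{m_j}x_0\to 0$ along a subsequence, hence $Q_\gamma=0$ by supercyclicity of $x_0$; for (ii) it shows that every vector in $\x$ has bounded orbit, so $T$ is power bounded, and then Ansari-Bourdon gives $T^n\to 0$ strongly and hence $T_nx\to 0$. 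Your route is shorter and more elementary at this stage, though it delegates the analytic work to Theorem \ref{supercyclic} (itself proved with the same $V_\gamma$ machinery and Ansari-Bourdon), whereas the paper's treatment of case (ii) yields the extra information that hypothesis (ii) forces $T$ to be power bounded and thereby reduces (ii) to (i). One small point: your choice of a nonzero $z\in\n(f)$ requires $\dim\x\ge 2$; this is harmless, since the statement is only meaningful in that case and the paper's appeal to Ansari-Bourdon needs it as well.
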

\begin{proof}  If (i) holds, there is  a subsequence $\{m_j\}\subset \N$ with $T^{m_j}x_0\to 0$ as $j\to \infty$. If $Q_\gamma$ and $V_\gamma$ are the operators associated to  the seminorm  $\gamma$  given by $(2.2)$ with $m=0$, from the relation $V_{\gamma} ^{m_j}Q_{\gamma} x_0=Q_{\gamma} T^{m_j}x_0\to 0$ it follows that $Q_{\gamma} x_0=0$ ($V_{\gamma}$ being an isometry). This gives $Q_{\gamma}=0$, since $x_0$  is a supercyclic vector for $T$. Hence $\x_0=\n(Q_{\gamma})=\x$ which means that  $T_n\to 0$ strongly on $\x$.

If (ii) holds, we can use (i) to assume, without loss of generality, that $\alpha :=\inf_{n\in \N} \|T^nx_0\|>0$. Let $0\neq x\in \x$ and $\{\lambda _k\}\subset \C \setminus \{0\}$, $\{n_k\}\subset \N$ be such that $x=\lim_{k\to \infty} \lambda _k T^{n_k}x_0$. Then
$$
\|x\|=\lim_{k\to \infty} |\lambda _k|\|T^{n_k}x_0\|\ge \alpha \limsup_{k\to \infty} |\lambda _k|,
$$
therefore the sequence $\{\lambda _k\}$ is bounded, and so it contains a convergent subsequence, let's say $\lambda _{k_j} \to \lambda $, $j\to \infty$. Obviously, one has $\lambda \neq 0$ because $\{T^{n_{k_j}}x_0\}$ is bounded and $\lim_{j\to \infty} \lambda _{k_j} T^{n_{k_j}}x_0=x \neq 0$. Then for $m\in \N$ we have that
$$
\|T^mx\|=\lim_{j\to \infty} |\lambda _{k_j}|\|T^mT^{n_{k_j}}x_0\|\le c|\lambda |.
$$
Hence $T$ is power bounded and so $T^m\to 0$ strongly. In turn, this gives
$$
\|T_nx\|=\lim_{j\to \infty} \|\lambda _{k_j}T_nT^{n_{k_j}}x_0\|\le |\lambda |\sup_{m\in \N} \|T_m\|\lim_{j\to \infty} \|T^{n_{k_j}}x_0\|=0,
$$
and consequently $T_nx\to 0$ for any $x\in \x$.
\end{proof}

 \section{Conditions invariant to rotations}
Given  a sequence $\{T_n\}$ in $\kappa(T)$ and $\lambda\in \C$, we denote by $T_{n\lambda}$ the sequence obtained by applying the same convex combinations to the operator $\lambda T$, that is
 \begin{equation}\label{rotinv} T_n=\sum_{j\ge 0}t_{nj}T^j\, \Longrightarrow  T_{n\lambda}=\sum_{j\ge 0}t_{nj}\lambda^j T^j
 \,,\end{equation}
 where $t_{nj}\ge 0,~\sum_{j\ge 0}t_{nj}=1$. We are interested in the effect of growth restrictions for $T_{n\lambda}$, that are uniform in $\lambda \in \T$.

Note that if $T_n=T^n$,  then $T_{n\lambda}=\lambda^nT_n,~\lambda\in \T$. If $T_n=\frac1{2}(T^{n-1}+T^n)$, it easy to verify that
$\{T_{n\lambda}\}$ are uniformly bounded on the unit circle, if and only if $T$ is power bounded. The  situation become much more interesting for
other  sequences.
For example,  if $\{T_n\}$ is the sequence of discretized Abel sums
\begin{equation}\label{abelsumd}T_n=\frac1{n}\sum_{j=0}^\infty (1-\frac1{n})^jT^j\,,\end{equation} then the uniform boundedness of the sequences $\{T_{n\lambda}\},~\lambda\in \T$, is equivalent to the {\it Kreiss boundedness condition}
$$\|(T-\lambda I)^{-1}\|\le \frac{C}{|\lambda|-1}\,,\quad |\lambda|>1\,,$$
for some constant $C>0$.

\subsection{A generalized Nevanlinna theorem}
O. Nevanlinna [N] proved the interesting result that for operators $T\in  \B$ satisfying the Kreiss boundedness condition,
and with $\sigma(T)\cap \T$ of arclength measure zero, one has $$\|T^n\|=o(n), \quad n\to \infty\,.$$

The aim of the present subsection is to show that this type of estimate holds in a very general context and can be derived from rotational invariant conditions.  This requires a different approach which makes use of the  backward iterate $T_n^{(-1)}$ defined in \eqref{backit2}.

\begin{theorem}\label{nevanlinna} Let $\{T_n\}$ be a sequence in $\kappa(T)$  with $T_n=\sum_{j\ge 0}t_{nj}T^j$, that satisfies \eqref{backit1} and
$$\lim_{n\to\infty}\sum_{j\ge 1}jt_{nj}=\infty\,.$$
 If $\{w_n\}$ is an increasing sequence of positive numbers such that
$$\|T_{n\lambda}\|+\|T_{n\lambda}^{(-1)}\|\le w_n\,,$$
for all $\lambda \in \T$, and $\sigma(T)\cap\T$ has arclength measure zero, then for every strictly increasing sequence of positive integers $\{m_n\}$, we have $$\|T^n\|\frac{\sum_{j\ge n+1}t_{m_nj}}{\sum_{j\ge 1}jt_{m_nj}}=o(w_{m_n})\,,\quad n\to\infty\,.$$
\end{theorem}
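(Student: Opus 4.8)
The plan is to exploit the backward iterate identity \eqref{backitid} together with the hypothesis that $\sigma(T)\cap\T$ has zero arclength measure, transferring the problem to an estimate for $T^n$ on the range of $(T-I)$ and then removing the $(T-I)$ factor by an averaging/Fejér-type argument over the circle. Writing $c_n:=\sum_{j\ge 1}jt_{nj}$ and $a_n:=\sum_{j\ge n+1}t_{nj}$, the goal is $\|T^n\|\,a_{m_n}/c_{m_n}=o(w_{m_n})$. The first step is to observe that for $|\lambda|=1$ the operator $\lambda T$ still has peripheral spectrum of measure zero and obeys the same growth bound $\|T_{n\lambda}\|+\|T_{n\lambda}^{(-1)}\|\le w_n$; moreover the backward iterate of $\{T_{n\lambda}\}$ is exactly $\{T_{n\lambda}^{(-1)}\}$ with the same normalizing constant $c_n$ (the coefficients $s_{nk}$ are unchanged). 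Applying \eqref{backitid} to $\lambda T$ gives
$$
T_{m_n\lambda}^{(-1)}(\lambda T-I)=c_{m_n}^{-1}\bigl(T_{m_n\lambda}-I\bigr),
$$
so $\|T_{m_n\lambda}^{(-1)}(\lambda T-I)\|\le c_{m_n}^{-1}(w_{m_n}+1)$, a bound uniform in $\lambda\in\T$.

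The second step is a Cauchy-integral (or Fourier/Fejér) representation of $T^n$ in terms of the resolvent-like family $\{T_{m_n\lambda}^{(-1)}(\lambda T-I)\}_{\lambda\in\T}$. The key point is that $T_{m_n\lambda}^{(-1)}=\sum_{k\ge 0}s_{m_nk}\lambda^kT^k$ has nonnegative coefficients summing to one with $s_{m_n k}$ supported for $k\ge 0$ and, crucially, $s_{m_n k}$ stays bounded below by a definite amount (roughly $a_{m_n}/c_{m_n}$) for $k$ up to about $m_n$ because $\sum_{j\ge k+1}t_{m_nj}\ge\sum_{j\ge m_n+1}t_{m_nj}=a_{m_n}$ for $k\le m_n$. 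Integrating $\lambda^{-n-1}T_{m_n\lambda}^{(-1)}(\lambda T-I)$ (or a suitable Fejér-smoothed version of it) over $\T$ isolates, up to harmless lower-order contributions coming from the action of $(\lambda T-I)$, a multiple of $T^n$ with coefficient comparable to $s_{m_n n}\ge a_{m_n}/c_{m_n}$; all other Fourier modes either vanish by orthogonality or are controlled because $\sigma(T)\cap\T$ has measure zero, which is what lets one pass the integral through the (singular on a null set) factor $(\lambda T-I)$. This is the step where one needs the peripheral-spectrum hypothesis in an essential way: one regularizes by integrating against an approximate identity concentrated away from $\sigma(T)\cap\T$, uses dominated convergence off the null set, and absorbs the error.

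Putting these together yields $\|T^n\|\cdot (a_{m_n}/c_{m_n})\le C\,\|T_{m_n\cdot}^{(-1)}(\cdot\,T-I)\|_{\infty,\T}+(\text{error})\le C\,c_{m_n}^{-1}(w_{m_n}+1)+o(w_{m_n})$, and since $c_{m_n}\to\infty$ by hypothesis and $w_n$ is increasing, the right-hand side is $o(w_{m_n})$. I expect the main obstacle to be making the second step rigorous: precisely, justifying the interchange of the circle integral with the unbounded behavior of $(\lambda T-I)$ near $\sigma(T)\cap\T$ and quantifying exactly how the measure-zero condition converts into an $o(1)$ (rather than $O(1)$) error — this is the analytic heart of Nevanlinna's original argument and the place where the zero-arclength hypothesis cannot be dropped, as the paper itself notes via the Shields-type counterexample. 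The bookkeeping identifying the coefficient of $T^n$ in the integral with something $\ge a_{m_n}/c_{m_n}$ is routine once the positivity of the $s_{m_nk}$ and the elementary inequality $\sum_{j\ge k+1}t_{m_nj}\ge a_{m_n}$ for $k\le m_n$ are in hand.
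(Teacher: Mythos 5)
You have assembled the right ingredients (the identity \eqref{backitid} applied to $\lambda T$, the uniform bound $\|T_{m_n\lambda}^{(-1)}(\lambda T-I)\|\le c_{m_n}^{-1}(w_{m_n}+1)=o(w_{m_n})$, Fourier extraction over $\T$, dominated convergence), but the central step has a genuine gap. You propose to integrate $\lambda^{-n-1}T_{m_n\lambda}^{(-1)}(\lambda T-I)$ over $\T$ and claim this isolates $T^n$ with coefficient comparable to $s_{m_nn}$. It does not: the $n$-th Fourier coefficient of $\lambda\mapsto T_{m_n\lambda}^{(-1)}(\lambda T-I)=\sum_k s_{m_nk}\bigl(\lambda^{k+1}T^{k+1}-\lambda^kT^k\bigr)$ is $(s_{m_n,n-1}-s_{m_nn})T^n=\frac{t_{m_nn}}{c_{m_n}}T^n$, which is in general much smaller than $s_{m_nn}T^n$ and can even vanish (for the discretized Abel means it is smaller by a factor of order $n$; for Ces\`aro means with $m_n=pn$ the coefficient $t_{m_nn}$ need not be comparable to $\sum_{j\ge n+1}t_{m_nj}$). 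So the ``harmless lower-order contributions'' carry the entire content of the estimate, and the proposed regularization by an approximate identity concentrated away from $\sigma(T)\cap\T$ supplies no mechanism to recover them.

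The correct assembly, which is the paper's, orders the two steps differently. First use the measure-zero hypothesis pointwise: for a.e.\ $\lambda\in\T$ one has $\bar\lambda\notin\sigma(T)$, so $\lambda T-I$ is invertible and $\R(\lambda T-I)=\x$; hence the restricted bound $\|T_{n\lambda}^{(-1)}|\R(\lambda T-I)\|=o(w_n)$ upgrades, for each such fixed $\lambda$, to $\|T_{n\lambda}^{(-1)}\|=o(w_n)$ -- no regularization near the spectrum is involved. Only then extract the Fourier coefficient of $T_{m_n\lambda}^{(-1)}$ \emph{alone}, whose $n$-th coefficient is exactly $s_{m_nn}T^n$ with $s_{m_nn}=\frac{\sum_{j\ge n+1}t_{m_nj}}{\sum_{j\ge 1}jt_{m_nj}}$; note this is the prefactor in the statement, not your $a_{m_n}/c_{m_n}=\sum_{j\ge m_n+1}t_{m_nj}/c_{m_n}$, which is a strictly weaker quantity. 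Finally, the hypothesis $\|T_{n\lambda}^{(-1)}\|\le w_n$ -- which your argument never actually invokes -- is precisely the uniform dominating function that lets dominated convergence convert the a.e.\ pointwise decay into $\int_0^{2\pi}\|T_{m_ne^{it}}^{(-1)}\|\frac{{\rm d}t}{2\pi}=o(w_{m_n})$, completing the proof.
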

\begin{proof} We obviously have for $\lambda\in \T$,  $$T_{n\lambda}^{(-1)}(\lambda T-I)=\left(\sum_{j\ge 1}jt_{nj}\right)^{-1}( T_{n\lambda}-I)\,,$$
hence, by assumption we obtain
$$\|T_{n\lambda}^{(-1)}| \R(\lambda T-I)\|=o(w_n), \quad n\to \infty\,.$$
Since $ \R(\lambda T-I)=X$  a.e. on $\T$ with respect to arclength measure, it follows that
\begin{equation}\label{domconv}\| T_{n\lambda}^{(-1)}\|=o(w_n)\,,\end{equation}
a.e. on $\T$. With the notation in \eqref{backit2} we  also have
$$w_{m_n}^{-1}s_{m_nn}T^n=w_{m_n}^{-1}\int_0^{2\pi}T^{(-1)}_{m_n e^{it}}e^{-int}\frac{{\rm d}t}{2\pi}\,,$$
in the sense of Bochner integral, so that
\begin{equation}\label{backitlast}w_{m_n}^{-1}s_{m_nn}\|T^n\|\le w_{m_n}^{-1}\int_0^{2\pi}\|T^{(-1)}_{m_n e^{it}}\|\frac{{\rm d}t}{2\pi}\,.\end{equation}
Again by assumption we have that
$$\sup_{t\in [0,2\pi]} w_{m_n}^{-1}\|T^{(-1)}_{m_n e^{it}}\|=O(1), \quad n\to \infty \,,$$
and by \eqref{domconv} we can apply the dominated convergence theorem in \eqref{backitlast} to obtain the conclusion.
\end{proof}

\begin{remark} {\rm The reason for considering arbitrary increasing sequences of integers $\{m_n\}$ in Theorem 4.1
is that for  a general sequence $\{T_n\}$ in $\kappa(T)$, the $n$-th power $T^n$ may not appear in the $n$-term $T_n$. Moreover, different choices of $\{m_n\}$ may lead to different estimates of $\|T^n\|$. Both aspects occur in the case of Ces\`aro means.}\end{remark}

 A very natural class of sequences $\{T_n\}$ for which Theorem \ref{nevanlinna} applies directly, are those generated by power series with positive coefficients, as defined in \eqref{powerseries}, since their backward iterates are easily determined by \eqref{powerseriesback}. However, in order to avoid technicalities we shall only consider the particular case of the discretized Abel sums for which we have $T_n^{(-1)}=T_n$. As a direct consequence we find the following {\it extended version of Nevanlinna's theorem}.

\begin{corollary}\label{Abel}
Let $u:(1,\infty)\to (0,\infty)$ be a decreasing function.  If $T\in \B$ satisfies the condition
$$\|(T-\lambda I)^{-1}\|\le \frac{u(|\lambda|)}{|\lambda|-1}\,,\quad |\lambda|>1\,,$$
and $\sigma(T)\cap \T$ has arclength measure zero, then
$$
\|T^n\|=o\left(nu\left(\frac{n}{n-1}\right)\right), \quad n\to \infty.
$$
\end{corollary}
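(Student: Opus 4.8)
The plan is to apply Theorem \ref{nevanlinna} to the sequence of discretized Abel sums from \eqref{abelsumd}, namely $T_n=A(T,1-\frac1n)=\frac1n\bigl(I-(1-\frac1n)T\bigr)^{-1}$. This is legitimate: the resolvent hypothesis forces $\sigma(T)\subset\overline{\D}$, so these operators are well defined and lie in $\kappa(T)$; moreover their backward iterate satisfies $T_n^{(-1)}=T_n$, as recorded after \eqref{powerseriesback}, so $\|T_{n\lambda}^{(-1)}\|=\|T_{n\lambda}\|$ and it suffices to estimate the single quantity $\|T_{n\lambda}\|$.

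First I would convert the resolvent condition into a bound for $\|T_{n\lambda}\|$. Writing $r=1-\frac1n$ and $|\lambda|=1$, one has $T_{n\lambda}=(1-r)\bigl(I-r\lambda T\bigr)^{-1}$ and $I-r\lambda T=-r\lambda\,(T-\frac{1}{r\lambda}I)$ with $|\frac{1}{r\lambda}|=\frac1r>1$; hence the hypothesis gives
$$
\|T_{n\lambda}\|=(1-r)\,\frac1r\,\Bigl\|\bigl(T-\tfrac{1}{r\lambda}I\bigr)^{-1}\Bigr\|\le (1-r)\,\frac1r\cdot\frac{u(1/r)}{1/r-1}=u\!\left(\frac1r\right)=u\!\left(\frac{n}{n-1}\right).
$$
Thus $w_n:=2u\bigl(\frac{n}{n-1}\bigr)$ dominates $\|T_{n\lambda}\|+\|T_{n\lambda}^{(-1)}\|$ for every $\lambda\in\T$, and $\{w_n\}$ is increasing because $\frac{n}{n-1}$ decreases to $1$ while $u$ is decreasing. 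An elementary geometric-series computation gives $\sum_{j\ge 1}jt_{nj}=n-1\to\infty$ (with $t_{nj}=\frac1n(1-\frac1n)^j$ and $t_{n0}=\frac1n\ne1$ for $n\ge2$, which is all \eqref{backit1} needs for the asymptotics), so all hypotheses of Theorem \ref{nevanlinna} are in place.

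Finally I would invoke Theorem \ref{nevanlinna} with the identity choice $m_n=n$. Since $\sum_{j\ge n+1}t_{nj}=(1-\frac1n)^{n+1}$ and $\sum_{j\ge 1}jt_{nj}=n-1$, its conclusion reads
$$
\|T^n\|\,\frac{(1-\frac1n)^{n+1}}{n-1}=o\!\left(u\!\left(\frac{n}{n-1}\right)\right),\qquad n\to\infty,
$$
and since $(1-\frac1n)^{n+1}\to e^{-1}$, the factor $\frac{n-1}{(1-1/n)^{n+1}}$ is $O(n)$; dividing through and absorbing the bounded constant into the $o(\cdot)$ yields $\|T^n\|=o\bigl(n\,u(\frac{n}{n-1})\bigr)$. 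There is no real obstacle beyond bookkeeping: the only points that need a moment's care are that $\{w_n\}$ be checked to be increasing (which uses the monotonicity of $u$ in the correct direction) and that the elementary identities for $\sum_j jt_{nj}$ and $\sum_{j>n}t_{nj}$ be computed correctly; all the substance is carried by Theorem \ref{nevanlinna} and by the identity $T_n^{(-1)}=T_n$ for Abel means.
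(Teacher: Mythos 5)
Your proposal is correct and follows exactly the route the paper takes: apply Theorem \ref{nevanlinna} to the discretized Abel means, using $T_{n}^{(-1)}=T_{n}$ and the resolvent hypothesis at the point $\frac{1}{r\lambda}$ (of modulus $\frac{1}{r}=\frac{n}{n-1}$) to get $\|T_{n\lambda}\|\le u\bigl(\frac{n}{n-1}\bigr)$ uniformly in $\lambda\in\T$. The paper states this in two lines and omits the bookkeeping; your computations of $\sum_{j\ge 1}jt_{nj}=n-1$ and $\sum_{j\ge n+1}t_{nj}=(1-\frac1n)^{n+1}\to e^{-1}$ are exactly the details needed to extract the stated $o\bigl(nu(\frac{n}{n-1})\bigr)$ bound, and they are all correct.
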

\begin{proof} We have that
the Abel means \eqref{abelsumd} are in $\kappa(T)$ and satisfy $$\|T_n\|\le u\left(\frac{n}{n-1}\right)\,.$$
By the observation preceding the corollary, $T_n^{(-1)}$ satisfy the same inequality and the result follows by an application of
 Theorem \ref{nevanlinna}.
\end{proof}
With a specific choice of the function $u$ above we obtain :

\begin{corollary}\label{nevanlinna3} If $T\in \B$ satisfies for some $r\ge 0$ the condition
$$\sup_{|\lambda|>1}\frac{(|\lambda|-1)^{r+1}}{|\lambda |^r}\|(T-\lambda I)^{-1}\|<\infty\,,$$
and  $\sigma(T)\cap\T$ has arclength measure zero, then
$$\|T^n\|=o(n^{r+1})\,,\quad n\to\infty\,.$$

\end{corollary}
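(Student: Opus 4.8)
The plan is to deduce the statement from Corollary \ref{Abel} by making an appropriate choice of the decreasing function $u$. Denote by $C$ the finite supremum appearing in the hypothesis, so that
$$\|(T-\lambda I)^{-1}\|\le C\,\frac{|\lambda|^r}{(|\lambda|-1)^{r+1}}=\frac{1}{|\lambda|-1}\cdot C\left(\frac{|\lambda|}{|\lambda|-1}\right)^r\,,\quad |\lambda|>1\,.$$
This makes it natural to set $u(s)=C\left(\dfrac{s}{s-1}\right)^r$ for $s>1$.

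First I would check that $u$ is admissible for Corollary \ref{Abel}, i.e. that it is a decreasing map from $(1,\infty)$ into $(0,\infty)$. Positivity is clear since $C>0$. For monotonicity, note that $\frac{s}{s-1}=1+\frac{1}{s-1}$ is decreasing in $s$, and since $r\ge 0$ the map $t\mapsto t^r$ is nondecreasing on $(0,\infty)$; hence $s\mapsto\left(\frac{s}{s-1}\right)^r$ is decreasing, and multiplication by the positive constant $C$ preserves this. By construction we then have $\|(T-\lambda I)^{-1}\|\le u(|\lambda|)/(|\lambda|-1)$ for all $|\lambda|>1$, and since in addition $\sigma(T)\cap\T$ has arclength measure zero, Corollary \ref{Abel} applies and yields
$$\|T^n\|=o\!\left(n\,u\!\left(\tfrac{n}{n-1}\right)\right)\,,\quad n\to\infty\,.$$

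It then only remains to evaluate $u$ at $n/(n-1)$. Using $\frac{n}{n-1}-1=\frac{1}{n-1}$ we get $\dfrac{n/(n-1)}{\,n/(n-1)-1\,}=n$, hence $u\!\left(\tfrac{n}{n-1}\right)=Cn^r$ and consequently $n\,u\!\left(\tfrac{n}{n-1}\right)=Cn^{r+1}$. Therefore $\|T^n\|=o(n^{r+1})$ as $n\to\infty$, which is the assertion. I do not expect any genuine obstacle here: the result is a direct specialization of Corollary \ref{Abel}, the only point requiring a (routine) verification being the monotonicity of the chosen $u$.
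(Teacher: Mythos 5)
Your proof is correct and is exactly the argument the paper intends: the corollary is stated as a direct specialization of Corollary \ref{Abel} with the choice $u(s)=C\left(\frac{s}{s-1}\right)^r$, and your verification of the monotonicity of $u$ and the evaluation $u\!\left(\frac{n}{n-1}\right)=Cn^r$ are the only (routine) steps required. No gaps.
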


The next corollary provides only spectral conditions (on the resolvent function of $T$) in order for $\{M_n^{(r)}(T)\}$ to be uniformly convergent to the ergodic projection, for $r\ge 1$.

\begin{corollary}\label{uniformCesaro}
Let $T\in \B$. If $1$ is a simple pole of the resolvent of $T$, $\sigma(T)\cap \T$ has arclength measure zero and for an integer $r\ge 1$ we have
$$
\sup_{|\lambda|>1}\frac{(|\lambda|-1)^{r}}{|\lambda |^{r-1}}\|(T-\lambda I)^{-1}\|<\infty, \quad \sup_{\lambda >1} (\lambda -1)\|(T-\lambda I)^{-1}\|<\infty \,,
$$
then $$\|M_n^{(r)}(T)-P_T\|\to 0, \hspace*{1mm} {\rm as}\hspace*{1mm} n\to \infty.$$
\end{corollary}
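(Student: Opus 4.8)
The plan is to reduce the statement to an application of Corollary \ref{nevanlinna3} (with $r$ replaced by $r-1$) together with the structural facts about Ces\`aro means recorded in Section 2. The starting point is the following dichotomy. Since $1$ is a simple pole of the resolvent, we have the Riesz decomposition $\x=\R(P_T)\oplus\n(P_T)$ with $\R(P_T)=\n(T-I)$ and $\n(P_T)=\overline{\R(T-I)}$, both invariant under $T$; moreover $M_n^{(r)}(T)$ acts as the identity on $\R(P_T)$. Hence $M_n^{(r)}(T)-P_T$ is supported on $\overline{\R(T-I)}$, and it suffices to prove that $M_n^{(r)}(T)\to 0$ in operator norm on the restriction $T_1:=T|\overline{\R(T-I)}$. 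Note that $1\notin\sigma(T_1)$, so the second resolvent hypothesis $\sup_{\lambda>1}(\lambda-1)\|(T-\lambda I)^{-1}\|<\infty$ is what is needed to guarantee that removing the pole leaves a genuinely bounded resolvent near $1$; together with the first hypothesis it will give, for $T_1$, a Kreiss-type bound of the form $\sup_{|\lambda|>1}\frac{(|\lambda|-1)^{r}}{|\lambda|^{r-1}}\|(T_1-\lambda I)^{-1}\|<\infty$ with \emph{no} blow-up as $|\lambda|\downarrow 1$, i.e. precisely the hypothesis of Corollary \ref{nevanlinna3} applied on $\overline{\R(T-I)}$ with exponent $r-1$.

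Next I would invoke Corollary \ref{nevanlinna3} for $T_1$ to obtain $\|T_1^n\|=o(n^{r})$ as $n\to\infty$; here we use that $\sigma(T_1)\cap\T\subset\sigma(T)\cap\T$ still has arclength measure zero. The final step is a purely combinatorial passage from this growth estimate on the powers to a norm estimate on the order-$r$ Ces\`aro means. Using the closed form in \eqref{0},
$$
M_n^{(r)}(T_1)=\frac{r}{(n+1)\cdots(n+r)}\sum_{j=0}^n\frac{(n-j+r-1)!}{(n-j)!}T_1^j\,,
$$
one bounds $\|M_n^{(r)}(T_1)\|$ by $\frac{r}{(n+1)\cdots(n+r)}\sum_{j=0}^n (n-j+r-1)^{r-1}\cdot o(j^{r})$; splitting the sum at $j\sim\varepsilon n$ and estimating the weights $\frac{(n-j+r-1)!}{(n-j)!}\le (n+r)^{r-1}$ shows that the total is $O(\varepsilon^{r})+o(1)$ for each fixed $\varepsilon>0$, hence $\|M_n^{(r)}(T_1)\|\to 0$. (Equivalently, one may run the argument inductively on $r$ using the recursion \eqref{1}: if $M_n^{(r-1)}(T_1)$ is bounded and $\frac1n T_1^n\to 0$ in norm, a summation-by-parts against \eqref{1} upgrades this to $\|M_n^{(r)}(T_1)\|\to 0$; the base case $r=1$ is Cesàro summability of a sequence tending to $0$.) Combining with the identity $M_n^{(r)}(T)-P_T=(M_n^{(r)}(T_1)-0)\oplus 0$ relative to the Riesz decomposition yields the claim.

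The main obstacle I anticipate is the first paragraph: verifying cleanly that the two resolvent hypotheses, one genuinely weaker near $\lambda=1$ than the other, combine to give a single Kreiss-type bound for $T_1$ that is uniform down to the unit circle, so that Corollary \ref{nevanlinna3} is applicable on $\overline{\R(T-I)}$ without any residual singularity at $1$. Concretely, one must control $\|(T_1-\lambda I)^{-1}\|$ near $1$ by subtracting the pole part $\frac{P_T}{1-\lambda}$ (which lives on the complementary summand and so does not affect $T_1$) and checking that the remaining regular part is governed by $\sup_{\lambda>1}(\lambda-1)\|(T-\lambda I)^{-1}\|$ via a compactness/analyticity argument on an annular neighbourhood of $1$; once this is done, the rest is the routine machinery of Section 2 and Corollary \ref{nevanlinna3}. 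The combinatorial last step is standard and should present no difficulty.
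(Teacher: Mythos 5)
Your overall strategy --- Riesz decomposition at the simple pole, restriction to $T_1:=T|\R(T-I)$, Corollary \ref{nevanlinna3} with exponent $r-1$ to obtain $\|T_1^n\|=o(n^r)$, and then a passage from this growth estimate to the order-$r$ Ces\`aro means --- is genuinely different from the paper's argument, which is a pure citation chain (Corollary \ref{nevanlinna3} combined with Theorem 6 of [Hi], Theorem 1 of [Ed] and the uniform Abel ergodic theorem of [LSS]). The first two steps of your reduction are fine, and the difficulty you flag in your last paragraph is actually a non-issue: since $1$ is a simple pole, $\R(T-I)$ is a closed $T$-invariant and resolvent-invariant complement of $\n(T-I)$, so the first resolvent hypothesis passes to $T_1$ verbatim, $\sigma(T_1)\subset\sigma(T)$, and the second hypothesis is not needed on your route at all (it is in fact implied by the simple-pole assumption together with the first hypothesis).

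The genuine gap is the final combinatorial step. The implication ``$\|S^j\|=o(j^r)\Rightarrow\|M_n^{(r)}(S)\|\to 0$'' is false, and your splitting estimate does not establish it: for $j\le\varepsilon n$ you are summing weights of size roughly $n^{-1}$ against norms of size up to $Cj^r$, which gives a contribution of order $\varepsilon^{r+1}n^{r}$, not $O(\varepsilon^r)$. A convex combination of operators of norm $o(n^r)$ is in general only $o(n^r)$: if $\|S^j\|\sim j^{r-1/2}$, then $\|M_n^{(r)}(S)\|$ can be of order $n^{r-1/2}$. The parenthetical inductive alternative is also misstated, since you assume that $M_n^{(r-1)}(T_1)$ is bounded and that $\frac1n T_1^n\to 0$, neither of which is available. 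What rescues the argument is exactly the ingredient you have but do not use at this point, namely the invertibility of $T_1-I$: from \eqref{1},
$$M_n^{(p)}(T_1)=\frac{p}{n+1}\bigl(M_{n+1}^{(p-1)}(T_1)-I\bigr)(T_1-I)^{-1}\,,$$
and iterating this from $p=r$ down to $p=1$ (the base being $M_{n+r}^{(0)}(T_1)=T_1^{n+r}$) yields
$$\|M_n^{(r)}(T_1)\|\le\frac{C_r\,\|(T_1-I)^{-1}\|^{r}}{(n+1)\cdots(n+r)}\bigl(\|T_1^{n+r}\|+O(n^{r-1})\bigr)=o(1)\,.$$
With this replacement your proof closes, and it is more self-contained than the one in the paper.
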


\begin{proof}
This is obtained by using Corollary \ref{nevanlinna3}, Theorem 6 in [Hi], Theorem 1 [Ed] and the uniform Abel ergodic theorem in [LSS].
\end{proof}

We should point out here that the hypothesis that $\sigma(T)\cap\T$ has
arclength measure zero cannot be relaxed, in general (see [N] and Subsection 5.3 below).

\subsection{Ces\`aro means and resolvent estimates}

As mentioned in Section 2, if $T_n=M_n^{(p)}(T)$, then $T_n^{(-1)}=M_{n-1}^{p+1}(T)$. Then
 Theorem \ref{nevanlinna} applies to Ces\`aro means in the following way.
\begin{corollary}\label{nevanlinna1} Let $\{w_n\}$ be an increasing sequence of positive numbers. If $T\in \B$ satisfies the condition
$$\sup_{n\in \N\atop \lambda\in \T}w_n^{-1}\|M_n^{(p)}(\lambda T)\|<\infty\,,$$
for some integer $p\ge 1$,
and $\sigma(T)\cap\T$ has arclength measure zero, then
$$\|T^n\|=o(nw_{pn})\,,\quad n\to\infty\,.$$
\end{corollary}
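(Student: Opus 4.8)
The plan is to read off Corollary \ref{nevanlinna1} as a direct specialization of Theorem \ref{nevanlinna}, applied to the sequence $T_n = M_n^{(p)}(T)$. First I would record the coefficients: by the second line of \eqref{0} we have $T_n = \sum_{j\ge 0} t_{nj} T^j$ with
$$t_{nj} = \frac{p}{(n+1)\cdots(n+p)}\cdot\frac{(n-j+p-1)!}{(n-j)!}\,,\qquad 0\le j\le n\,,$$
and $t_{nj}=0$ for $j>n$. One checks \eqref{backit1} holds for $n\ge 1$ (indeed $t_{n0}\ne 1$ and the sum defining $\sum_{j\ge 1} jt_{nj}$ is finite, being a finite sum). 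The quantity $\sum_{j\ge 1} jt_{nj}$ is the key normalizing factor; using the identity \eqref{1}, namely $M_n^{(p)}(T)(T-I) = \frac{p}{n+1}(M_{n+1}^{(p-1)}(T)-I)$, together with $\sum_{j\ge1} jt_{nj} = \big(\tfrac{d}{dz}\sum_j t_{nj}z^j\big)|_{z=1}$, one gets that $\sum_{j\ge 1}jt_{nj}$ grows linearly in $n$; more precisely $\sum_{j\ge1}jt_{nj} = \frac{n+1}{p}\cdot(\text{something}\to 1)$, so in particular $\sum_{j\ge1}jt_{nj}\to\infty$, verifying the second hypothesis of Theorem \ref{nevanlinna}. (Alternatively one cites the remark in Section 2 that $T_n^{(-1)}=M_{n-1}^{(p+1)}(T)$ and uses \eqref{backitid}.)

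Next I would set up the weight. The rotated sequence is $T_{n\lambda} = M_n^{(p)}(\lambda T)$ by definition \eqref{rotinv}, and its backward iterate is $T_{n\lambda}^{(-1)} = M_{n-1}^{(p+1)}(\lambda T)$ as noted in Section 2.4. Now I want an increasing positive sequence $\{W_n\}$ with $\|T_{n\lambda}\| + \|T_{n\lambda}^{(-1)}\| \le W_n$ for all $\lambda\in\T$. The hypothesis gives $\|M_n^{(p)}(\lambda T)\|\le C w_n$ uniformly in $\lambda\in\T$; the point is that it also controls $M_{n-1}^{(p+1)}(\lambda T)$, since Cesàro means of higher order are averages of lower ones — from the first line of \eqref{0}, $M_{n-1}^{(p+1)}(\lambda T)$ is a convex combination of $M_j^{(p)}(\lambda T)$ for $0\le j\le n-1$, hence $\|M_{n-1}^{(p+1)}(\lambda T)\| \le \max_{0\le j\le n-1}\|M_j^{(p)}(\lambda T)\| \le C w_{n-1}\le C w_n$ using that $\{w_n\}$ is increasing. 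So $W_n := 2C w_n$ works, and it is increasing.

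Then I apply Theorem \ref{nevanlinna}. It yields, for every strictly increasing sequence of positive integers $\{m_n\}$,
$$\|T^n\|\,\frac{\sum_{j\ge n+1} t_{m_n j}}{\sum_{j\ge 1} j t_{m_n j}} = o(W_{m_n}) = o(w_{m_n})\,,\quad n\to\infty\,.$$
The remaining step is purely computational: choose $m_n = pn$ (this is the choice flagged in the Remark after Theorem \ref{nevanlinna}, that different $\{m_n\}$ give different estimates), and estimate the ratio $R_n := \big(\sum_{j\ge n+1} t_{m_n j}\big)/\big(\sum_{j\ge1} j t_{m_n j}\big)$ from below by a constant. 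The denominator is $\asymp m_n/p = n$ as above. For the numerator, $\sum_{j\ge n+1} t_{m_n j} = \sum_{j=n+1}^{pn} t_{pn,j}$; using the product form $t_{nj} = \frac{p}{n+p}\prod_{k=1}^{p-1}(1-\frac{j}{n+k})$ from the third line of \eqref{0}, for $j$ ranging over a fixed fraction of $[0,pn]$ these coefficients are $\asymp 1/n$ each, and there are $\asymp n$ of them, so $\sum_{j=n+1}^{pn} t_{pn,j} \asymp 1$ — bounded below by a positive constant independent of $n$. Hence $R_n\gtrsim 1/n \cdot 1 / 1$... more carefully $R_n = (\text{const})/n \cdot n = $ wait: numerator $\asymp 1$, denominator $\asymp n$, so $R_n\asymp 1/n$. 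That gives $\|T^n\|/n = o(w_{pn})$, i.e. $\|T^n\| = o(n\,w_{pn})$, which is exactly the claim.

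I expect the main obstacle to be the elementary but slightly fiddly asymptotic analysis of the Cesàro coefficient sums $\sum_{j=n+1}^{pn} t_{pn,j}$ and $\sum_j j\,t_{pn,j}$ — making sure the constants are uniform in $n$ and that the lower bound on the numerator really holds (one should check it does not degenerate, e.g. by comparing the sum to an integral of $\prod_k(1-\tfrac{j}{pn})$ over $j/(pn)\in(1/p,1)$, where the integrand is bounded away from $0$). Everything else — verifying the hypotheses of Theorem \ref{nevanlinna} and the domination of $M^{(p+1)}$ by $M^{(p)}$ — is routine once the recurrences \eqref{0}–\eqref{1} are invoked.
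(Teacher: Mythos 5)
Your proposal follows the paper's own proof essentially verbatim: the paper likewise uses the first identity in \eqref{0} to transfer the bound $w_n$ to the backward iterate $M_{n-1}^{(p+1)}(\lambda T)$, applies Theorem \ref{nevanlinna} with $m_n=pn$, and then simply evaluates the ratio in closed form, $s_{m_n n}=\frac{p+1}{pn+1}\left(\frac{p-1}{p}\right)^p\alpha_n$ with $\alpha_n\to 1$, where you estimate it asymptotically. The only caveat --- shared with the paper's own argument --- is that for $p=1$ the choice $m_n=pn=n$ makes the numerator $\sum_{j\ge n+1}t_{nj}$ an empty sum (equivalently, the factor $\left(\frac{p-1}{p}\right)^p$ vanishes), so the computation as written only delivers the stated conclusion for $p\ge 2$; also, your parenthetical claim that the integrand is bounded away from $0$ on $(1/p,1)$ is not literally true, though the integral $\int_{1/p}^1(1-x)^{p-1}\,{\rm d}x$ is positive for $p\ge 2$, which is all that is needed.
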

\begin{proof}  By \eqref{0} we have that
$$\sup_{n\in \N\atop \lambda\in \T}w_n^{-1}\|M_n^{(p+1)}(\lambda T)\|<\infty\,,$$
hence, by the remarks above, we can apply Theorem \ref{nevanlinna} to $T_n=M_n^{(p)}(T)$ with $m_n=pn$. From the second equality in \eqref{0} we have $$s_{m_nn}=\frac{\sum_{j\ge n+1}t_{m_nj}}{\sum_{j\ge 1}jt_{m_nj}}=\frac{p+1}{pn+1}\left(\frac{p-1}{p}\right)^p\alpha_n \,,$$
where $\alpha _n \to 1^+$. The conclusion follows from  Theorem \ref{nevanlinna}.
\end{proof}

For the special choice $w_n=n^r,~n\ge 1$, with $r\ge 0$ fixed,  Corollary \ref{nevanlinna1} yields the implication
\begin{equation}\label{nevanlinna2}
\sup_{n\ge 1 \atop \lambda\in \T}n^{-r}\|M_n^{(p)}(\lambda T)\|<\infty\Longrightarrow \|T^n\|=o(n^{r+1}), \quad n\to \infty\,,\end{equation}
whenever  $p\ge 1$ and $\sigma(T)\cap\T$ has arclength measure zero. It turns out that such ''regular'' growth restrictions for the Ces\`aro means can be reformulated in terms of resolvent estimates. This
has been first proved by  Strikwerda and Wade [SW] in the case when $p=2$ and $r=0$. The Ces\`aro means of order one are related to the partial sums of the Taylor expansion at infinity  of the resolvent function of $T$. The uniform boundedness of these partial sums in the exterior of the unit disc is called  the {\it uniform Kreiss boundedness condition} (see [MSZ] and [GH] for the boundedness only on the real line) and is equivalent to the uniform boundedness of $\{M_n(\lambda T)\}$ on the unit circle. These last two conditions are more restrictive.
The result below provides an extension of the theorems we just mentioned.

\begin{theorem}\label{kreisstype}
Given  $T\in  \B$ we have:\\
(i) If $p\ge 2$ and $r\ge 0$ then
$$\sup_{n\ge 1 \atop \lambda\in \T}n^{-r}\|M_n^{(p)}(\lambda T)\|<\infty \,\,\Longleftrightarrow \,\,
\sup_{ |\lambda|>1}\frac{(|\lambda|-1)^{r+1}}{|\lambda |^r}\|(T-\lambda I)^{-1}\|<\infty\,,$$
(ii) If $p=1$ and $r\ge 0$ then
$$\sup_{n\ge 1 \atop \lambda\in \T}n^{-r}\|M_n(\lambda T)\|<\infty \,\,\Longleftrightarrow \,\,
\sup_{n\in \N\atop |\lambda|>1}\frac{(|\lambda|-1)^{r+1}}{|\lambda |^r} \left\|\sum_{k=0}^n\lambda^{-k-1}T^k \right\|<\infty\,.$$
\end{theorem}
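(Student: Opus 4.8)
The plan is to recast both resolvent-type conditions in the disc variable $w=\lambda^{-1}$ and to link everything through the generating function of the Ces\`aro means. Writing $A_n^{(p)}=\binom{n+p}{p}$, formula \eqref{0} rewrites as $A_n^{(p)}M_n^{(p)}(T)=\sum_{j=0}^n\binom{n-j+p-1}{p-1}T^j$; since $\sum_{k\ge 0}\binom{k+p-1}{p-1}z^k=(1-z)^{-p}$, multiplying power series gives, for small $|z|$ and $|\lambda|=1$,
\[\sum_{n\ge 0}A_n^{(p)}M_n^{(p)}(\lambda T)\,z^n=\frac{(I-z\lambda T)^{-1}}{(1-z)^p}\,.\]
On the spectral side, putting $w=1/\mu$ and using $\sum_{k=0}^n\mu^{-k-1}T^k=w\sum_{k=0}^n(wT)^k$, a one-line computation shows that the right-hand condition in (i) is equivalent to $\sup_{|w|<1}(1-|w|)^{r+1}\|(I-wT)^{-1}\|<\infty$, and the one in (ii) to $\sup_{n,\,|w|<1}(1-|w|)^{r+1}\|\sum_{k=0}^n(wT)^k\|<\infty$. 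One first checks, in each implication, that the standing hypotheses force $\sigma(T)\subset\overline{\D}$, so the generating-function identity is valid throughout $|z|<1$.

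For the implication ``$\Longrightarrow$'', valid for every $p\ge 1$: assuming $\|M_n^{(p)}(\lambda T)\|\le Cn^r$ uniformly in $\lambda\in\T$, I take norms term by term in the displayed identity along $z\in(0,1)$; using $A_n^{(p)}\asymp n^p$ and $\sum_{n\ge 1}n^{p+r}z^n\asymp(1-z)^{-(p+r+1)}$ one obtains $\|(I-z\lambda T)^{-1}\|\le C'(1-z)^{-(r+1)}$, which, writing $w=z\lambda$, is precisely the $w$-form of (i). For the ``$\Longrightarrow$'' half of (ii) I argue instead by summation by parts: with $t=|w|$, $\lambda=w/|w|$ and $P_k(S)=\sum_{j=0}^kS^j=(k+1)M_k(S)$ one has $\sum_{k=0}^n(wT)^k=(1-t)\sum_{k=0}^{n-1}t^kP_k(\lambda T)+t^nP_n(\lambda T)$, so the Ces\`aro bound $\|M_k(\lambda T)\|\le C(k+1)^r$ gives $\|\sum_{k=0}^n(wT)^k\|\le C'(1-t)^{-(r+1)}$ uniformly in $n$, after estimating $\sum_k t^k(k+1)^{r+1}\asymp(1-t)^{-(r+2)}$ and absorbing the boundary term $t^n(n+1)^{r+1}$ via $x^{r+1}e^{-x}=O(1)$.

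The converse in (ii) is the reverse summation by parts: from $(\lambda T)^k=t^{-k}\big(\sum_{j=0}^k(wT)^j-\sum_{j=0}^{k-1}(wT)^j\big)$ one gets $P_n(\lambda T)=-(t^{-1}-1)\sum_{k=0}^{n-1}t^{-k}\big(\sum_{j=0}^k(wT)^j\big)+t^{-n}\sum_{k=0}^n(wT)^k$, whence the partial-sum bound yields $\|P_n(\lambda T)\|\le 2C(1-t)^{-(r+1)}t^{-n}$ for every $t\in(0,1)$; the choice $t=1-\tfrac1{n+1}$ keeps $t^{-n}$ bounded and gives $\|M_n(\lambda T)\|=(n+1)^{-1}\|P_n(\lambda T)\|\le C'n^r$. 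For the converse in (i) with $p\ge 2$, I use Cauchy's coefficient formula $A_n^{(p)}M_n^{(p)}(\lambda T)=\frac1{2\pi i}\oint_{|z|=\rho}(1-z)^{-p}z^{-n-1}(I-z\lambda T)^{-1}\,dz$ for $0<\rho<1$, bound $\|(I-z\lambda T)^{-1}\|\le K(1-\rho)^{-(r+1)}$ on the circle by the $w$-form of (i), and invoke the elementary estimate $\int_0^{2\pi}|1-\rho e^{i\theta}|^{-p}\,d\theta=O\big((1-\rho)^{1-p}\big)$; this produces $\|A_n^{(p)}M_n^{(p)}(\lambda T)\|\le K'(1-\rho)^{-(r+p)}\rho^{-n}$, and $\rho=1-\tfrac1{n+1}$ together with $A_n^{(p)}\asymp n^p$ forces $\|M_n^{(p)}(\lambda T)\|\le C'n^r$.

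The crux is that last integral estimate: $\int_0^{2\pi}|1-\rho e^{i\theta}|^{-p}\,d\theta$ is comparable to $(1-\rho)^{1-p}$ only when $p>1$, whereas for $p=1$ it is of order $\log\frac1{1-\rho}$. This logarithmic loss is exactly why the resolvent condition by itself cannot recover the Ces\`aro bound when $p=1$, and it is the reason why in (ii) one must replace the resolvent by the finite partial sums of its Taylor expansion at infinity, for which the (lossless) summation-by-parts argument applies.
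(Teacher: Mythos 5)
Your proof is correct and follows essentially the same route as the paper: the generating identity $(I-z\lambda T)^{-1}=(1-z)^p\sum_n\binom{n+p}{p}M_n^{(p)}(\lambda T)z^n$ with term-by-term norm estimates for the forward implications, Abel summation by parts in both directions for $p=1$, and Cauchy coefficient extraction against the kernel $(1-z)^{-p}z^{-n-1}$ with $\rho=1-\frac1{n+1}$ and the estimate $\int_0^{2\pi}|1-\rho e^{i\theta}|^{-p}\,{\rm d}\theta=O\bigl((1-\rho)^{1-p}\bigr)$ for the converse of (i). The only (cosmetic) differences are that you work in the disc variable $w=\lambda^{-1}$ and treat all $p\ge 2$ uniformly in the Cauchy-integral step, whereas the paper does $p=2$ and deduces $p>2$ from the convexity relation \eqref{0}.
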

\begin {proof} $(\Longrightarrow)$ The argument uses the ideas in [SW]. We start with the identity
 proved in [SW], formula (6.3), for $0<\rho<1$ and $\lambda\in \T$, namely
$$(I-\rho\lambda T)^{-1}=(1-\rho)^p\sum_{n=0}^\infty\binom{n+p}{p}M_n^{(p)}(\lambda T)\rho^n\,.$$
For $p\ge2$ the resolvent estimate follows directly from
$$\sum_{n=0}^\infty\binom{n+p}{p}n^r\rho^n\le C(1-\rho)^{-p-r-1}\,.$$
By a straightforward argument we see that the above equality holds for all $\rho\in \D$ and by comparing the coefficients of $\rho^k$ we obtain for $p=1$,
$$\sum_{k=0}^n\rho^k\lambda^kT^k=(1-\rho)\sum_{k=0}^{n-1}(k+1)M_k(\lambda T)\rho^k +(n+1)M_n(\lambda T)\rho^n\,.$$
In this case the desired estimate follows from the inequalities
 $$\sum_{n=0}^{n-1}(k+1)^{r+1}\rho^k\le C(1-\rho)^{-r-2}\,,\quad (1-\rho)^{r+1}\rho^n\le C'(n+1)^{-r-1}\,,\,$$
 for $ 0<\rho<1$.
 \smallskip

$(\Longleftarrow)$  We begin with (i) and assume first that $p=2$. Use \eqref{0} to write for  $0<\rho<1$, $\lambda\in \T$,
$$\frac{ (n+1)(n+2)}{2} M_n^{(2)}(\lambda T)=\int_0^{2\pi}(I-\rho\lambda e^{it}T)^{-1}\sum_{j=0}^{n}(n-j+1)e^{-ijt}\rho^{-j}\frac{{\rm d}t}{2\pi}\,,$$
so that
\begin{align*}\left\|\frac{ (n+1)(n+2)}{2} M_n^{(2)}(\lambda T)\right\|&\le \frac{C}{(1-\rho)^{r+1}}\rho^{-n} \int_0^{2\pi}\frac{1}{|1-\rho e^{-it}|^2}\frac{{\rm d}t}{2\pi}\\&= \frac{C}{(1-\rho)^{r+1}(1-\rho ^2)}\rho ^{-n}\\ &\le \frac{C}{(1-\rho)^{r+2}}\rho^{-n}
\,.\end{align*}
If we now choose $\rho=1-\frac1{n}$ we obtain the estimate for $M_n^{(2)}(T)$  in (i) for all $r\ge 0$. For $p>2$, this  follows immediately from the first equality in \eqref{0}.

Let us turn to the case when $p=1$. For $n\ge 1$, $\mu\in \T$ and $\lambda \in \C$ with $|\lambda |>1$  we have  by the Abel summation formula
 $$
 M_n(\mu T) = \frac{1}{n+1}\sum_{k=0}^n (\mu T)^k=\frac{1}{n+1}\sum_{k=0}^n \lambda ^{k+1}\lambda ^{-k-1}(\mu T)^k
 $$
 $$
 = \frac{1}{n+1} \left(\lambda ^{n+1} \sum_{k=0}^n\lambda ^{-k-1}(\mu T)^k+(1-\lambda) \sum_{k=0}^{n-1} \lambda ^{k+1}\sum_{j=0}^k\lambda ^{-j-1}(\mu T)^j \right)
 $$
 $$
 = \frac{1}{n+1} \left(\lambda ^{n+1} \overline{\mu } \sum_{k=0}^n(\lambda \overline{\mu })^{-k-1}T^k+(1-\lambda) \sum_{k=0}^{n-1} \lambda ^{k+1}\overline{\mu }\sum_{j=0}^k (\lambda \overline {\mu})^{-j-1}T^j \right).
 $$
From the assumption
$$\sup_{n\in \N\atop |\lambda|>1}\frac{(|\lambda|-1)^{r+1}}{|\lambda |^r}\left\|\sum_{k=0}^n\lambda^{-k-1}T^k\right\|<\infty\,,$$
we obtain
 $$
 \|M_n(\mu T)\|\le C\frac{|\lambda |^{r+1}}{(|\lambda |-1)^{r+1}(n+1)}\left(|\lambda |^n+\frac{|1-\lambda| (|\lambda|^n-1)}{|\lambda |-1}\right)\,,
 $$
 and if $\lambda =1+\frac{1}{n}$,
 $$
 \|M_n(\mu T)\|\le 2^r (2e-1)Cn^r
 $$
 for $n\ge 1$ and $\mu \in \T$, which concludes the proof.
\end{proof}
 In Subsection 5.3 below we construct examples which show that the conditions in part (ii) are stronger than those in part (i) of the theorem.
In fact, by a similar argument as in the previous proof one can show that if one (any) of the conditions in $(i)$ is satisfied then
\begin{equation}\label{log}
\sup_{\lambda \in \T} \|M_n(\lambda T)\|=O(n^r\log(n)), \quad n\to \infty.
\end{equation}

On the other hand, the conditions in $(ii)$ do not imply a similar growth condition for the powers $T^n$. Indeed, let us consider $T=I-V$, where $V$ is the classical Volterra operator
$$
(Vf)(t):=\int_0^tf(s){\rm d}s, \quad f\in L^p[0,1], \quad 1\le p\le \infty, \quad p\neq 2.
$$
It is known from [MSZ] that $T$ is uniformly Kreiss bounded and $\|T^n\| \sim n^{|\frac{1}{4}-\frac{1}{2p}|},$
where $\sim$ stands for ''comparable''. In particular this gives that $\{\frac{T^n}{\log(n)}\}$ is unbounded.

\subsection{The case $p=1$}
We want to investigate the growth restrictions
\begin{equation}\label{ec48}\sup_{n\ge 1 \atop \lambda\in \T}n^{-r}\|M_n(\lambda T)\|<\infty\,,\end{equation}
in more detail. The somewhat surprising result we are going to prove below asserts that under these conditions, the conclusion of Theorem \ref{nevanlinna} in the sense of strong convergence only holds under an additional assumption. In particular, our theorem provides a partial answer to Question 3 in [MSZ].

\begin{theorem}\label{uniformKreiss} Let $T\in \B$ satisfying for some integer $r\ge 0$ the condition \eqref{ec48}, and let $Q_{\gamma}$ be the operator induced by the seminorm $\gamma$ given by \eqref{gamma} for $T_n=M_n^{(r+1)}(T)$ and $m=r+1$. If $\R(Q_{\gamma})$ is closed then
$$\|T^nx\|=o(n^{r+1})\,,\quad n\to\infty\,$$
for $x\in \x$.
\end{theorem}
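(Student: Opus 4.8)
The plan is to show that the hypothesis \eqref{ec48} makes $\gamma$ a continuous seminorm with a transparent description, and then that closedness of $\R(Q_\gamma)$ forces $\gamma\equiv0$, which is exactly the asserted conclusion. First I would note that \eqref{ec48} already yields $\|T^n\|=O(n^{r+1})$: by Theorem \ref{kreisstype}(ii) it is equivalent to $\sup_{n,\,|\lambda|>1}\frac{(|\lambda|-1)^{r+1}}{|\lambda|^r}\,\|\sum_{k=0}^n\lambda^{-k-1}T^k\|<\infty$, and substituting $\lambda=1+\frac1n$ in the telescoping identity $(\lambda I-T)\sum_{k=0}^n\lambda^{-k-1}T^k=I-\lambda^{-n-1}T^{n+1}$ gives the estimate. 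Next, iterating the Ces\`aro identity \eqref{1} exactly $r+1$ times produces
\[
M_n^{(r+1)}(T)(T-I)^{r+1}=\frac{(r+1)!}{(n+1)\cdots(n+r+1)}\,T^{n+r+1}+R_n,\qquad \|R_n\|=O(1/n),
\]
the lower order terms being scalar multiples, of size $O(n^{-(r+1-j)})$, of the fixed operators $(T-I)^j$ for $0\le j\le r$. Hence $\sup_n\|M_n^{(r+1)}(T)(T-I)^{r+1}\|<\infty$, so $\gamma$ in \eqref{gamma} is continuous, Proposition \ref{secondobs} applies, and $\gamma(x)=(r+1)!\,\limsup_m\|T^mx\|/m^{r+1}$; in particular $\gamma(Tx)=\gamma(x)$, so $V_\gamma$ is an isometry (cf.\ Theorem \ref{regobs}(ii)), $\n(\gamma)=\{x\in\x:\|T^mx\|=o(m^{r+1})\}$, and the theorem is equivalent to the assertion $\x_\gamma=\{0\}$.

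To prove $\x_\gamma=\{0\}$ I would run, directly on $\x$, a variant of the argument behind Theorem \ref{nevanlinna} with $T_n=M_n^{(r+1)}(T)$, whose backward iterate is $T_n^{(-1)}=M_{n-1}^{(r+2)}(T)$. Averaging in \eqref{0} gives the uniform rotational bound $\|M_n^{(r+1)}(\lambda T)\|+\|M_n^{(r+2)}(\lambda T)\|\le c\,n^r=:w_n$ for all $\lambda\in\T$. Taking $m_n=(r+1)n$, one has the Bochner representation $s_{m_nn}\,T^nx=\int_0^{2\pi}M_{m_n-1}^{(r+2)}(e^{it}T)\,x\,e^{-int}\,\frac{dt}{2\pi}$ with $s_{m_nn}\sim\mathrm{const}/n$ (as computed in Corollary \ref{nevanlinna1}), so that
\[
\frac{s_{m_nn}}{w_{m_n}}\,\|T^nx\|\le\int_0^{2\pi}\frac{\|M_{m_n-1}^{(r+2)}(e^{it}T)x\|}{w_{m_n}}\,\frac{dt}{2\pi}.
\]
The integrand is bounded by $\|x\|$, and since $M_n^{(r+2)}(e^{it}T)(e^{it}T-I)=\frac{r+2}{n+1}\bigl(M_{n+1}^{(r+1)}(e^{it}T)-I\bigr)=O(n^{r-1})$, its $\limsup$ in $n$ is at most $\mathrm{dist}(x,\overline{\R(e^{it}T-I)})$. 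As $\frac{s_{m_nn}}{w_{m_n}}\sim\mathrm{const}/n^{r+1}$, the reverse Fatou lemma yields the key estimate
\[
\gamma(x)\le C\int_0^{2\pi}\mathrm{dist}\bigl(x,\overline{\R(e^{it}T-I)}\bigr)\,\frac{dt}{2\pi},\qquad x\in\x.
\]

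It then remains to deduce $\gamma\equiv0$ from this estimate together with the closedness of $\R(Q_\gamma)$. When $\sigma(T)\cap\T$ has measure zero the right-hand side is automatically $0$ for every $x$ — this is the mechanism of Theorem \ref{nevanlinna} — so the real point is that closedness forces the "bad set" $B=\{t:\overline{\R(e^{it}T-I)}\neq\x\}=\{t:e^{-it}\in\sigma_p(T^*)\}$ to be Lebesgue--null (equivalently, $\mathrm{dist}(x,\overline{\R(e^{it}T-I)})=0$ for a.e.\ $t$, for each $x$). Here one uses that $\R(Q_\gamma)$ closed makes $Q_\gamma$ an open surjection, so that $\x_\gamma$ is the genuine Banach quotient $\x/\n(\gamma)$ (with $\gamma$ equivalent to the quotient norm), $V_\gamma$ is the isometry induced by $T$ and $\R(Q_\gamma^*)=\n(\gamma)^\perp$; combining this with $T^*Q_\gamma^*=Q_\gamma^*V_\gamma^*$ and the uniform rotational bound $\|M_n^{(r+1)}(\mu V_\gamma)\|\le c\,n^r$ ($\mu\in\T$), one analyses how $\n(\gamma)$ meets the subspaces $\overline{\R(e^{it}T-I)}$ and concludes $|B|=0$. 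Then the integral vanishes, $\gamma\equiv0$, and $\|T^nx\|=o(n^{r+1})$ for every $x\in\x$.

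Everything up to and including the key estimate is a somewhat lengthy but essentially routine adaptation of the computations already carried out for Theorems \ref{nevanlinna} and \ref{kreisstype}. The genuinely delicate step — and the only place where the hypothesis enters in an essential way — is the last one: showing that mere closedness of $\R(Q_\gamma)$, which is much weaker than $|\sigma(T)\cap\T|=0$, still forces the bad set $B$ to be null so that the dominated convergence step survives. I expect the main difficulty there to lie in controlling the unimodular eigenfunctionals of $T^*$ and their action on $\n(\gamma)$, since such an eigenfunctional need not a priori annihilate $\n(\gamma)$; the closedness hypothesis has to be exploited precisely in order to rule this out.
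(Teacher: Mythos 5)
Your reduction to $\x_\gamma=\{0\}$ (the seminorm $\gamma(x)\sim\limsup_n n^{-r-1}\|T^nx\|$, the induced isometry $V_\gamma$, boundedness of $n^{-r-1}T^n$ from \eqref{ec48} and \eqref{1}) coincides with the paper's setup, and your ``key estimate'' $\gamma(x)\le C\int_0^{2\pi}\mathrm{dist}(x,\overline{\R(e^{it}T-I)})\,\frac{{\rm d}t}{2\pi}$ is a correct localization of the Theorem \ref{nevanlinna} argument. But from that point on the proof is not there. The decisive step --- showing that closedness of $\R(Q_\gamma)$ forces $\mathrm{dist}(x,\overline{\R(e^{it}T-I)})=0$ for a.e.\ $t$ --- is exactly the part you leave as ``delicate'' and do not carry out, and the mechanism you propose for it does not work. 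The set where that distance is positive is $E_x=\{t:\ \exists f\in\x^*,\ T^*f=e^{it}f,\ f(x)\neq 0\}$, and the closedness of $\R(Q_\gamma)$ has no visible bearing on its Lebesgue measure: for instance any operator with $\gamma\equiv 0$ has $\x_\gamma=\{0\}$ closed while $\sigma_p(T^*)\cap\T$ may be uncountable. Conversely, if \eqref{ec48} alone implied $|E_x|=0$ for every $x$, your key estimate would prove the conclusion with no closedness hypothesis at all --- which is essentially the open Question the paper states at the end of Section 4, and is precisely what the ``additional technical assumption'' is there to avoid. So the route through the peripheral point spectrum of $T^*$ leads you straight into the open problem rather than around it.

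The paper exploits the closedness hypothesis in an entirely different way. Using the identity $T^js_n(\lambda T)=\overline{\lambda}^j[(n+j+1)M_{j+n}(\lambda T)-jM_{j-1}(\lambda T)]$ (where $s_n(\lambda T)=\sum_{k=0}^n\lambda^kT^k$), dividing by $j^{r+1}$ and letting $j\to\infty$, condition \eqref{ec48} yields the uniform bound
\begin{equation*}
\sup_{n\ge 1,\ \lambda\in\T}\Bigl\|\sum_{k=0}^n\lambda^kV_\gamma^kQ_\gamma\Bigr\|<\infty\,;
\end{equation*}
closedness of $\R(Q_\gamma)$ (via the open mapping theorem) upgrades this to a uniform bound on $\sum_{k=0}^n\lambda^kV_\gamma^k$ as operators on $\x_\gamma$. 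If $\x_\gamma\neq\{0\}$, the isometry $V_\gamma$ has an approximate eigenvalue $\zeta\in\T$; feeding approximate eigenvectors $x_j$ and norming functionals $l_j$ into this bound, integrating $|\sum_k l_j(e^{ikt}V_\gamma^kx_j)|^2$ over $\T$ and applying Parseval gives $\sum_{k=0}^n|l_j(V_\gamma^kx_j)|^2\le\alpha^2$, hence $n\le 4\alpha^2$ for all $n$ --- a contradiction. If you want to salvage your write-up, replace the entire last step by this Parseval argument; the Nevanlinna-style integral estimate, while true, is not strong enough to finish the proof.
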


\begin{proof}
Note first that a direct application of  \eqref{1} shows that under our hypothesis we have that $\{n^{-r-1}T^n\}$ is bounded, so one can define the seminorm on $\x$
$$\gamma(x)=\limsup_{n\to \infty} n^{-r-1}\|T^nx\|=\limsup_{n\to \infty} \|T_n(T-I)^{r+1}x\|, \quad x\in \x\,,$$
where $T_n=M_n^{(r+1)}(T)$. By \eqref{ec48} we have $\frac{1}{n}T_nx\to 0$ for $x\in \R(T-I)^r$, while by \eqref{2} this means that $\{T_n\}$ satisfies \eqref{reg} with $n_0=1$ and $m=r$. By applying Theorem \ref{regobs} $(ii)$ and Proposition \ref{secondobs} to $T_n$ and $\gamma$, one obtains an isometry $V_{\gamma}$ on the corresponding quotient space $\x_{\gamma}$ satisfying $V_{\gamma}Q_{\gamma}=Q_{\gamma}T$, where $Q_{\gamma}$ is the corresponding quotient map of $\x$ into $\x_{\gamma}=\overline{Q_{\gamma}\x}$. The following observation reveals a remarkable property of these operators. Namely, we have

\begin{equation}\label{mainid}\alpha=\sup_{n\ge 1\atop\lambda\in \T}\left\|\sum_{k=0}^n\lambda^kV_\gamma^kQ_{\gamma}\right\|<\infty\,.\end{equation}
Indeed, if
$$s_n(\lambda T)=\sum_{k=0}^n\lambda^k T^k\,,$$
then for $j\ge1$ we have
 $$
 T^js_n(\lambda T)=\overline{\lambda}^j[(n+j+1)M_{j+n}(\lambda T)-jM_{j-1}(\lambda T)]\,.$$
 Divide both sides by $j^{r+1}$ to obtain for $x\in \x$ and $n\ge 1$ fixed
$$\|Q_{\gamma}s_n(\lambda T)x\|=\gamma(s_n(\lambda T)x)\le 2\limsup_{m\to\infty} m^{-r}\|M_m(\lambda T)\|\|x\|\,,$$
which gives \eqref{mainid}.

We claim that \eqref{mainid} implies $Q_{\gamma}=0$, that is $\x_{\gamma}=\{0\}$, under our assumption. To see this, assume that $\R(Q_{\gamma})$ is closed and $Q_{\gamma} \neq 0$. Since $V_\gamma$ is a non zero isometry it must have an approximate eigenvalue $\zeta\in \T$. Then there exist sequences $\{x_j\}$ in $\x_\gamma$, $\{l_j\}$  in $ \x_\gamma^*$ , with
$$\|x_j\|_\gamma=1\,,\quad \lim_{j\to\infty}(V_\gamma-\zeta I)x_j=0\,,$$
and $$\|l_j\|=1\,, \quad |l_j(x_j)|>\frac1{2}\,.$$

Now by \eqref{mainid} we have for $j,n\ge 1$
$$\int_0^{2\pi}\left|\sum_{k=0}^n l_j(e^{ikt}V_{\gamma}^kx_j)\right|^2\frac{{\rm d}t}{2\pi}\le \alpha ^2\,,$$
and  Parseval's formula gives
$$\sum_{k=0}^n|l_j(V_{\gamma}^kx_j)|^2\le \alpha ^2\,.$$
Since for fixed $n$ we have
$$\lim_{j\to\infty}\sum_{k=0}^n\|V_{\gamma}^kx_j-\zeta^k x_j \|^2=0\,,$$
by letting $j\to \infty$ above we obtain
$$\limsup_{j\to\infty}\sum_{k=0}^n|l_j(x_j)|^2\le \alpha ^2\,,$$
and we arrive at the inequality $n\le 4\alpha ^2$ for all positive integers $n$, which gives a contradiction.

Thus $Q_{\gamma}=0$ and consequently, $\x_\gamma=\{0\}$, that is
$$\|T^nx\|=o(n^{r+1})\,,\quad n\to\infty\,,$$
for all $x\in \x$. This ends the proof.
  \end{proof}
 The examples in the next section show that the exponent of $n$ cannot be improved.

  As an application of this theorem, let us remark that if $T$ is supercyclic and it satisfies \eqref{ec48} then $V_{\gamma}$ is also supercyclic, hence $\dim (\x_{\gamma})\le 1$ because $V_{\gamma}$ is an isometry. So $\R(Q_{\gamma})=\x_{\gamma}$, and by Theorem \ref{uniformKreiss} we have $\x_{\gamma}=\{0\}$. In particular, if $r=0$ in \eqref{ec48} then one obtains $\|T^nx\|=o(n)$ as $n\to \infty$ for $x\in \x$. Thus we infer by Corollary \ref{supercyclicse} the following extension of the Ansari-Bourdon theorem

 \begin{corollary}\label{co49}
 If $T\in \B$ is supercyclic and uniformly Kreiss bounded such that $1 \notin \sigma_p(T^*) \setminus \sigma_p (T)$, then for every $\lambda \in \T$ the Ces\`aro means $M_n(\lambda T)$ converge strongly on $\x$.
 \end{corollary}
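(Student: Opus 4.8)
The plan is to deduce the statement from Corollary \ref{supercyclicse} applied to each rotated operator $\lambda T$, $\lambda\in\T$; the key preliminary step is to turn uniform Kreiss boundedness into the \emph{ergodicity} of all the Ces\`aro means. Recall (Theorem \ref{kreisstype}(ii) with $r=0$) that uniform Kreiss boundedness of $T$ is exactly condition \eqref{ec48} with $r=0$, i.e. $\sup_{n\ge 1,\,\lambda\in\T}\|M_n(\lambda T)\|<\infty$. First I would observe that, since $T$ is supercyclic, the isometry $V_\gamma$ attached by Theorem \ref{regobs}(ii) and Proposition \ref{secondobs} to $T_n=M_n(T)$, $m=1$, and the seminorm $\gamma(x)=\limsup_n n^{-1}\|T^nx\|$ is itself supercyclic: indeed its quotient map $Q_\gamma$ has dense range and intertwines $T$ with $V_\gamma$, so it carries a supercyclic orbit of $T$ onto a dense set. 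A supercyclic isometry can only act on a space of dimension at most one, hence $\dim\x_\gamma\le 1$; in particular $\R(Q_\gamma)=\x_\gamma$ is closed, and Theorem \ref{uniformKreiss} (case $r=0$) gives $\x_\gamma=\{0\}$, that is $\|T^nx\|=o(n)$ for every $x\in\x$.

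Next I would record the consequences. From the identity $M_n(T)(T-I)=\frac{1}{n+1}(T^{n+1}-I)$ in \eqref{1} and $\|T^nx\|=o(n)$ we get $\|(T-I)M_n(T)x\|\to 0$ for all $x$, so $\{M_n(T)\}$ is ergodic, and it is bounded by hypothesis. Now fix $\lambda\in\T$. The operator $\lambda T$ is again supercyclic (nonzero scalar multiples of a supercyclic operator are supercyclic, with the same supercyclic vectors) and again uniformly Kreiss bounded, since $M_n(\mu\cdot\lambda T)=M_n((\mu\lambda)T)$ gives $\sup_{n,\mu\in\T}\|M_n(\mu\lambda T)\|=\sup_{n,\nu\in\T}\|M_n(\nu T)\|<\infty$; moreover $\|(\lambda T)^nx\|=\|T^nx\|=o(n)$, so exactly as above $\{M_n(\lambda T)\}\subset\kappa(\lambda T)$ is bounded and ergodic.

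Then I would apply Corollary \ref{supercyclicse} to the operator $\lambda T$ and the sequence $\{M_n(\lambda T)\}$. Since $\lambda T$ is supercyclic, $(\lambda T)^*$ has at most one eigenvalue, and it is simple; by that corollary the means $M_n(\lambda T)$ fail to converge strongly on $\x$ only in the exceptional alternative (iii), that is exactly when $\sigma_p((\lambda T)^*)\setminus\sigma_p(\lambda T)=\{1\}$. Using $\sigma_p((\lambda T)^*)=\lambda\,\sigma_p(T^*)$ and $\sigma_p(\lambda T)=\lambda\,\sigma_p(T)$, this reads $\bar\lambda\in\sigma_p(T^*)\setminus\sigma_p(T)$, which would force $\sigma_p(T^*)\setminus\sigma_p(T)$ to be the single unimodular point $\{\bar\lambda\}$. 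I would argue that the hypothesis $1\notin\sigma_p(T^*)\setminus\sigma_p(T)$ rules this out. In each of the remaining cases of Corollary \ref{supercyclicse} one has $\x_c=\x$, which is precisely the strong convergence of $\{M_n(\lambda T)\}$ on $\x$; since $\lambda\in\T$ was arbitrary, this is the assertion of the corollary.

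The step I expect to be the main obstacle is this last one: checking that the single point-spectrum condition on $T$ neutralizes alternative (iii) of Corollary \ref{supercyclicse} for \emph{every} rotation $\lambda T$ simultaneously, not only for $\lambda=1$. The natural way to handle it is structural — for a supercyclic operator $\sigma_p(T^*)$ is empty or a singleton (then simple), so $\sigma_p(T^*)\setminus\sigma_p(T)$ is empty or a singleton; the empty case disposes of all $\lambda$ at once, and the singleton case is relevant only when that point lies on $\T$. Replacing $T$ by $\bar\mu T$, one is reduced to excluding the degenerate configuration in which a supercyclic, uniformly Kreiss bounded operator has $1\in\sigma_p(T^*)$ but $1\notin\sigma_p(T)$, which is precisely what the hypothesis forbids. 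Everything preceding this — the passage to $\|T^nx\|=o(n)$, the resulting ergodicity of the rotated Ces\`aro means, and the invocation of Corollary \ref{supercyclicse} — is routine once $\|T^nx\|=o(n)$ is in hand.
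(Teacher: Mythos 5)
Your argument follows the paper's route exactly: the paper's own derivation (the paragraph preceding the corollary) consists precisely of the observation that supercyclicity of $T$ makes $V_\gamma$ a supercyclic isometry, forcing $\dim\x_\gamma\le 1$, so that $\R(Q_\gamma)$ is closed and Theorem \ref{uniformKreiss} with $r=0$ yields $\|T^nx\|=o(n)$ for every $x$, after which Corollary \ref{supercyclicse} is invoked. Your intermediate steps --- ergodicity of $\{M_n(\lambda T)\}$ via \eqref{1}, supercyclicity and uniform Kreiss boundedness of each rotation $\lambda T$ --- are correct and are exactly what the paper leaves implicit.

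The final step, the one you yourself flag as the main obstacle, is however not closed by your argument. You correctly compute that alternative (iii) of Corollary \ref{supercyclicse} applied to $\lambda T$ occurs precisely when $\sigma_p(T^*)\setminus\sigma_p(T)=\{\bar\lambda\}$. But the claim that ``replacing $T$ by $\bar\mu T$'' reduces matters to ``precisely what the hypothesis forbids'' is wrong: after that replacement the configuration to be excluded is $\mu\in\sigma_p(T^*)\setminus\sigma_p(T)$, whereas the hypothesis only excludes the point $1$ from that set. Concretely, if $\sigma_p(T^*)\setminus\sigma_p(T)=\{\mu\}$ for some $\mu\in\T$ with $\mu\neq 1$, then the hypothesis $1\notin\sigma_p(T^*)\setminus\sigma_p(T)$ holds, yet alternative (iii) applies to $\bar\mu T$ and $\{M_n(\bar\mu T)y\}$ diverges for $y\notin\overline{\R(T-\mu I)}$. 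Thus your argument establishes the conclusion only for those $\lambda$ with $\bar\lambda\notin\sigma_p(T^*)\setminus\sigma_p(T)$ --- in particular for $\lambda=1$, and for all $\lambda\in\T$ whenever $\sigma_p(T^*)\setminus\sigma_p(T)$ misses the unit circle or is empty. To be fair, the paper's own justification is the single phrase ``Thus we infer by Corollary \ref{supercyclicse}'' and is silent on this very point; but since you explicitly attempted to close it, you should record that the closing move as written does not go through: one needs either the stronger hypothesis $\T\cap(\sigma_p(T^*)\setminus\sigma_p(T))=\emptyset$, or reflexivity of $\x$ (which, as the paper remarks after the corollary, makes the spectral condition superfluous because alternative (iii) is then excluded by the mean ergodic theorem).
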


 Notice that in reflexive spaces, the spectral condition in this corollary is superfluous (by Corollary \ref{supercyclicse}), therefore the sequence $\{M_n(\lambda T)\}$ always converges strongly in this case, the limit being non zero if and only if $1\in \sigma_p(T) \cap \sigma_p(T^*)$. This last spectral condition can be satisfied, as we shall see in Subsection 5.1. Thus a version of Lorch's theorem [Kr, Theorem 2.1.2, p. 73] on one hand, and of Ansari-Bourdon on the other hand, is obtained for supercyclic and uniformly Kreiss bounded operators, in reflexive spaces.

A question arises which is suggested by Theorem \ref{uniformKreiss}.
\smallskip

{\it Question.} For $T\in \B$ uniformly Kreiss bounded does $\|\frac{1}{n}T^n\|$ converge to zero as $n\to \infty$?
\smallskip

  \section{Examples}

\subsection{Uniformly Kreiss bounded hypercyclic and supercyclic operators }
Consider the weighted Dirichlet spaces $D_\alpha,~\alpha>-1$ consisting of analytic functions $f(z)=\sum_{n\ge 0}f_nz^n$in the unit disc $\D$ with the property that \begin{equation}\label{dirichletnorm} \|f\|^2=\sum_{n\ge 0}(n+1)^{1-\alpha}|f_n|^2<\infty\,,\end{equation}
 Clearly, the monomials $e_n(z)=(n+1)^{(1-\alpha)/2}z^n$ form an orthonormal basis in this space. Therefore, the operator $M_z$ of multiplication by the independent variable is a forward weighted shift of the form
$$M_z e_n=\left(\frac{n+2}{n+1}\right)^{(1-\alpha)/2}\,.$$
Its adjoint $M_z^*$ is a hypercyclic backward shift, and for every $\mu\in \T$ the matrix operator
$$T_\mu=
\begin{pmatrix}
\mu M_z^* & 0 \\
0 & \mu  I_\C
\end{pmatrix}\, $$
is supercyclic on $D_\alpha\oplus \C$,  with $\sigma_p(T_\mu)\cap\sigma_p(T_\mu^*)=\{\mu\}$ (see [BM, Theorem 1.40]).

The following result is well known. For example, it can be deduced from the results in [AP]. A direct consequence of it is that $M_z^*|D_\alpha,~
\alpha>0$,  and hence all corresponding operators $T_\mu$ are uniformly Kreiss bounded.
\begin{proposition}\label{pr51}
If $\alpha>0$ then the operator $M_z$ is uniformly Kreiss bounded on $D_\alpha$.
\end{proposition}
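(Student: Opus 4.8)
The plan is to reduce uniform Kreiss boundedness of $M_z$ to its Cesàro boundedness, and then to exploit the ``Dirichlet integral'' description of the norm on $D_\alpha$. By the discussion preceding Theorem \ref{kreisstype} (equivalently, Theorem \ref{kreisstype}(ii) with $r=0$), $M_z$ is uniformly Kreiss bounded if and only if $\sup_{n\ge 1,\,\lambda\in\T}\|M_n(\lambda M_z)\|<\infty$. Since the norm of $D_\alpha$ is rotation invariant, $(U_\lambda f)(z):=f(\lambda z)$ defines a unitary on $D_\alpha$ with $U_\lambda M_z U_\lambda^{-1}=\lambda M_z$, hence $\|M_n(\lambda M_z)\|=\|M_n(M_z)\|$ for every $\lambda\in\T$. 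So it suffices to prove that $M_z$ is Cesàro bounded on $D_\alpha$, i.e.\ that the operators $M_{q_n}=M_n(M_z)$ of multiplication by $q_n(z)=\frac1{n+1}\sum_{k=0}^nz^k=\frac1{n+1}\cdot\frac{1-z^{n+1}}{1-z}$ are bounded on $D_\alpha$ uniformly in $n$.

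I would then use the standard norm equivalence (which follows from the results in [AP])
$$\|g\|_{D_\alpha}^2\asymp|g(0)|^2+\int_\D|g'(z)|^2(1-|z|^2)^\alpha\,{\rm d}A(z),\qquad\alpha>0.$$
Applied to $g=q_nf$, together with $(q_nf)'=q_n'f+q_nf'$, it gives
$$\|M_{q_n}f\|_{D_\alpha}^2\lesssim|f(0)|^2+\int_\D|q_n'|^2|f|^2(1-|z|^2)^\alpha\,{\rm d}A+\int_\D|q_n|^2|f'|^2(1-|z|^2)^\alpha\,{\rm d}A.$$
Here $|f(0)|^2\le\|f\|_{D_\alpha}^2$, and since $|q_n(z)|\le\frac1{n+1}\sum_{k=0}^n|z|^k\le1$ on $\overline{\D}$ the last integral is $\le\int_\D|f'|^2(1-|z|^2)^\alpha\,{\rm d}A\lesssim\|f\|_{D_\alpha}^2$. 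Thus everything reduces to bounding the middle term uniformly in $n$.

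For that term I would combine two elementary facts. First, for $\alpha>0$ the diagonal of the reproducing kernel satisfies $K_\alpha(z,z)=\sum_{k\ge0}(k+1)^{\alpha-1}|z|^{2k}\asymp(1-|z|^2)^{-\alpha}$, so $|f(z)|^2\le K_\alpha(z,z)\|f\|_{D_\alpha}^2\lesssim(1-|z|^2)^{-\alpha}\|f\|_{D_\alpha}^2$; this is exactly where the hypothesis $\alpha>0$ is used, since at $\alpha=0$ one would only get a logarithmic bound. Second — and this is what makes the bound uniform — although $q_n'$ grows like $n$ near $z=1$, its Bergman $L^2$-norm stays bounded: from $q_n'(z)=\frac1{n+1}\sum_{k=1}^nkz^{k-1}$ and orthogonality of the monomials,
$$\int_\D|q_n'(z)|^2\,{\rm d}A(z)=\frac1{(n+1)^2}\sum_{k=1}^n\frac{k^2}{k}=\frac{n}{2(n+1)}<\frac12.$$
Multiplying the two estimates, the weight $(1-|z|^2)^\alpha$ is absorbed and
$$\int_\D|q_n'|^2|f|^2(1-|z|^2)^\alpha\,{\rm d}A\lesssim\|f\|_{D_\alpha}^2\int_\D|q_n'|^2\,{\rm d}A\le\tfrac12\|f\|_{D_\alpha}^2.$$
Combining the three bounds gives $\sup_n\|M_{q_n}\|<\infty$, completing the proof.

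The one step that is not a routine manipulation is this uniform control of $\int_\D|q_n'|^2|f|^2(1-|z|^2)^\alpha\,{\rm d}A$; equivalently, the fact that $|q_n'(z)|^2(1-|z|^2)^\alpha\,{\rm d}A(z)$ is a Carleson measure for $D_\alpha$ with constant independent of $n$. The argument above avoids the general Carleson-measure machinery by trading the boundary growth of $f\in D_\alpha$ against the weight, leaving only the uniform Bergman bound $\int_\D|q_n'|^2\,{\rm d}A=O(1)$, which is the crux. (When $\alpha\ge1$ there is nothing to prove, since $M_z$ is then power bounded — a contraction for $\alpha>1$ and the unilateral shift for $\alpha=1$ — so uniform Kreiss boundedness is automatic; the computation above in fact handles all $\alpha>0$ at once.)
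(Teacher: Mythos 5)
Your proof is correct, and the crux is genuinely different from the paper's. Both arguments start the same way: reduce to $\sup_{n,\lambda}\|M_n(\lambda M_z)\|<\infty$, use the equivalence $\|f\|^2\sim|f(0)|^2+\int_\D|f'|^2(1-|z|^2)^\alpha\,{\rm d}A$, split $(q_nf)'=q_n'f+q_nf'$, and dispose of the $q_nf'$ term via $|q_n|\le 1$. (Your preliminary reduction to $\lambda=1$ via the rotation unitary $U_\lambda$ is a clean extra simplification; the paper instead carries $\lambda$ through by working with $F_n(\lambda,\zeta)=q_n(\lambda\zeta)$.) The divergence is in the term $\int_\D|q_n'|^2|f|^2(1-|z|^2)^\alpha\,{\rm d}A$. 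The paper uses only the bound $|q_n'(w)|\le C/|1-w|$ (from $q_n'(w)(1-w)=q_n(w)-w^n$) and then must invoke the boundedness on $D_\alpha$ of the integration operator $f\mapsto\int_0^z f(\zeta)(1-\lambda\zeta)^{-1}{\rm d}\zeta$, imported from [AP] — in effect a uniform Carleson-measure estimate for $|1-\lambda z|^{-2}(1-|z|^2)^\alpha{\rm d}A$, which is nontrivial since $|1-\lambda z|^{-2}$ is not area-integrable. You avoid that external input entirely by exploiting $q_n'$ more finely than its pointwise majorant: the reproducing-kernel growth estimate $|f(z)|^2\lesssim(1-|z|^2)^{-\alpha}\|f\|^2$ (valid exactly for $\alpha>0$, which makes the role of the hypothesis transparent) absorbs the weight, leaving the uniform Bergman bound $\int_\D|q_n'|^2\,{\rm d}A=O(1)$, which is an exact orthogonality computation. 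Your route is elementary and self-contained; the paper's is less computation-specific and extends to any multiplier symbol whose derivative is dominated by $|1-\lambda z|^{-1}$, at the price of citing the resolvent/Cesàro-operator estimates of [AP]. The only cosmetic points: your constant $\tfrac12$ presumes normalized area measure (otherwise it is $\pi/2$, which changes nothing), and the closing remark that $\alpha\ge1$ gives power boundedness is correct but, as you say, not needed.
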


\begin{proof} A straightforward computation based on Parseval's formula yields
$$\|f\|^2\sim |f(0)|^2+\int_\D |f'(z)|^2(1-|z|^2)^\alpha {\rm d}A(z)\,,\quad f\in D_\alpha\,,$$
where $A$ denotes the area measure.
Then for $|\lambda|=1$ we have
$$M_n(\lambda M_z)f(\zeta)=\frac{1-(\lambda\zeta)^{n+1}}{(n+1)(1-\lambda\zeta)}f(\zeta)\,.$$
Since the functions $$F_n(\lambda,\zeta)=\frac{1-(\lambda\zeta)^{n+1}}{(n+1)(1-\lambda\zeta)}$$
are uniformly  bounded in $\D\times\D$ and satisfy
$$\left|\frac{\partial}{\partial\zeta} F_n(\lambda,\zeta)\right|\le \frac{C}{|1-\lambda\zeta|}$$
for some $C>0$ and all $\lambda,\zeta\in \D$ and all positive integers $n$, we obtain
\begin{align*}\|M_n(\lambda M_z)f\|^2&\sim \frac1{(n+1)^2}\left(|f(0)|^2+\int_\D |(M_n(\lambda M_z)f)'|^2(1-|z|^2)^\alpha  {\rm d}A\right)\\&
\lesssim \frac1{(n+1)^2}|f(0)|^2+\int_\D |f(z)'|^2(1-|z|^2)^\alpha {\rm d}A\\&+\int_\D \frac{|f(z)|^2}{|1-\lambda z|^2}(1-|z|^2)^\alpha {\rm d}A \,.\end{align*}
Thus, the statement will follow if the operator $$f\mapsto \int_0^z\frac{f(\zeta){\rm d}\zeta}{1-\lambda \zeta}\,,$$
is bounded on $D_\alpha$. As pointed out before, a proof of this fact can be found in [AP].
\end{proof}

This example shows that one can has in view some non-trivial ergodic properties (as in Section 3) for supercyclic operators $T$ which are not power bounded, but with some bounded sequences in $\kappa (T)$, as well the means $M_n^{(p)}(T)$, $p\ge 1$.

\subsection{A Ces\`aro bounded 3-isometry} Given a Hilbert space $\h$ and $T\in \mathcal{B}(\h)$, we say that $T$ is an {\it $N$-isometry}, $N\in \N$, (see [AS]), if it satisfies $$\sum_{k=0}^N(-1)^k\binom{N}{k}\|T^kx\|^2=0\,,\quad x\in \h\,.$$
We shall be concerned with the cases when $N=2,3$. Our aim is to construct a Ces\`aro bounded 3-isometry $T$ on the Hilbert space $\h$ for which the sequence $\{\frac{1}{n}T^nx\}$ is bounded below for all $x\in \h\setminus\{0\}$. In particular, $\{M_n(T)x\}$ diverges for each $f\in \h\setminus\{0\}$. Such an example is unknown in literature.

In addition, we shall see that for any $x$ the sequence $\{M_n(T)x\}$ converges weakly to zero.

We start with the following simple method of constructing a 3-isometry using a given 2-isometry.
Throughout in what follows we shall denote for a polynomial $p(z)=\sum_{n\ge 0}p_nz^n$
$$\|p\|_2^2=\sum_{n\ge 0}|p_n|^2= \int_\T |p(z)|^2{\rm d}m(z)  \,,$$
where $m$ is the normalized arclength measure on the unit circle and
$$Lp(z)=\sum_{n\ge 1}p_nz^{n-1}=\frac{p(z)-p(0)}{z}\,.$$

\begin{proposition}\label{23iso} Let $T$ be a 2-isometry on the Hilbert space $\h$. Fix $x_0\in \h\setminus\{0\}$ and let $\h_1$ be the completion of the space of analytic polynomials  with respect to the norm
$$\|p\|_1^2=\|p\|_2^2+\sum_{n\ge 0}\|L^np(T)x_0\|^2\,.$$
Then the operator of multiplication by the independent variable $M_zp=zp$ extends to a bounded operator on $\h_1$ which is a 3-isometry with the property that
$$\|M_zp\|_1^2-\|p\|^2_1=\|p(T)x_0\|^2\,.$$
\end{proposition}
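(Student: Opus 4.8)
The strategy is to reduce the $3$-isometry identity for $M_z$ on $\h_1$ to the $2$-isometry identity for $T$ on $\h$, using the explicit form of the norm $\|\cdot\|_1$ and the algebraic relation between multiplication by $z$ and the backward shift $L$. First I would record the key commutation identity: for any analytic polynomial $p$ one has $L^n(zp) = L^{n-1}p$ for $n\ge 1$, while $L^0(zp) = zp$. Consequently
\begin{equation*}
\|M_z p\|_1^2 = \|zp\|_2^2 + \sum_{n\ge 0}\|L^n(zp)(T)x_0\|^2 = \|p\|_2^2 + \|zp(T)x_0\|^2 + \sum_{n\ge 1}\|L^{n-1}p(T)x_0\|^2,
\end{equation*}
and after reindexing the last sum this equals $\|p\|_2^2 + \|zp(T)x_0\|^2 + \sum_{m\ge 0}\|L^m p(T)x_0\|^2$. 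But $zp$ evaluated at $T$ means $Tp(T)$, so $\|zp(T)x_0\|^2 = \|Tp(T)x_0\|^2$, and subtracting $\|p\|_1^2 = \|p\|_2^2 + \sum_{m\ge 0}\|L^m p(T)x_0\|^2$ gives exactly
\begin{equation*}
\|M_z p\|_1^2 - \|p\|_1^2 = \|Tp(T)x_0\|^2 .
\end{equation*}
This already looks like the claimed formula except that the right-hand side is $\|Tp(T)x_0\|^2$ rather than $\|p(T)x_0\|^2$; here I would invoke the $2$-isometry hypothesis on $T$ in the refined form that $n\mapsto \|T^n y\|^2$ is affine in $n$, hence $\|Ty\|^2 = \|y\|^2 + (\|Ty\|^2-\|y\|^2)$, and more to the point that the operator $D = T^*T - I \ge 0$ satisfies $T^*DT = D$; applying this with $y = p(T)x_0$ one does not immediately get $\|Ty\|^2=\|y\|^2$, so instead I would absorb the discrepancy by modifying the bookkeeping — more cleanly, rewrite $\sum_{n\ge 0}\|L^n p(T)x_0\|^2$ as $\|p(T)x_0\|^2 + \sum_{n\ge 1}\|L^np(T)x_0\|^2$ and compare term by term with the shifted sum, which is where the single leftover term $\|p(T)x_0\|^2$ versus the "$n=\infty$" tail must be controlled. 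The correct identity to aim at is $\|M_z p\|_1^2 - \|p\|_1^2 = \|p(T)x_0\|^2$, and the computation above shows the difference of the two candidate right-hand sides is $\|Tp(T)x_0\|^2 - \|p(T)x_0\|^2 = \langle D\,p(T)x_0, p(T)x_0\rangle$, which I would need to show telescopes against the defect in the infinite sum; this is the one genuinely delicate point and relies essentially on $T$ being a $2$-isometry (so that the relevant sums converge and the defect series sums correctly).

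Once the norm identity $\|M_zp\|_1^2 - \|p\|_1^2 = \|p(T)x_0\|^2$ is established for polynomials, boundedness of $M_z$ on $\h_1$ is immediate: $\|M_zp\|_1^2 = \|p\|_1^2 + \|p(T)x_0\|^2 \le \|p\|_1^2 + \sum_{n\ge 0}\|L^np(T)x_0\|^2 \le 2\|p\|_1^2$ (using that the $n=0$ term $\|p(T)x_0\|^2$ appears in the defining sum for $\|p\|_1^2$), so $M_z$ extends to a bounded operator on the completion $\h_1$ with $\|M_z\|\le \sqrt 2$.

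To finish, I would verify the $3$-isometry identity $\sum_{k=0}^3(-1)^k\binom{3}{k}\|M_z^k q\|_1^2 = 0$ for all polynomials $q$. Writing $\phi(q) = \|M_zq\|_1^2 - \|q\|_1^2 = \|q(T)x_0\|^2$ and noting $M_z^k q = z^k q$ so that $(z^kq)(T)x_0 = T^k q(T)x_0$, the second difference of $\phi$ along the $M_z$-orbit becomes
\begin{equation*}
\phi(M_z q) - \phi(q) = \|T q(T)x_0\|^2 - \|q(T)x_0\|^2 = \langle D\, q(T)x_0,\, q(T)x_0\rangle,
\end{equation*}
and applying $M_z$ once more, $\phi(M_z^2 q) - 2\phi(M_z q) + \phi(q) = \langle D\,Tq(T)x_0, Tq(T)x_0\rangle - \langle D\, q(T)x_0, q(T)x_0\rangle = \langle (T^*DT - D)q(T)x_0, q(T)x_0\rangle = 0$, precisely because $T$ is a $2$-isometry (equivalently $T^*DT = D$). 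Since the alternating sum $\sum_{k=0}^3(-1)^k\binom 3k\|M_z^kq\|_1^2$ is the third difference of $\|M_z^kq\|_1^2$, which equals the second difference of $\phi(M_z^kq)$, and we have just shown that second difference vanishes identically, the $3$-isometry identity holds on the dense set of polynomials, hence on all of $\h_1$ by continuity. The main obstacle, as flagged above, is handling the convergence of the infinite sum defining $\|\cdot\|_1$ and making the telescoping of the defect operator $D$ against that sum rigorous — everything else is a routine unwinding of the shift relation $L(zp) = p$.
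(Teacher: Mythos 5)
Your computation is correct and is essentially the paper's own argument: the shift relation $L^0(zp)=zp$, $L^n(zp)=L^{n-1}p$ for $n\ge 1$ gives
$$\|M_zp\|_1^2=\|p\|_2^2+\|Tp(T)x_0\|^2+\sum_{m\ge 0}\|(L^mp)(T)x_0\|^2=\|p\|_1^2+\|Tp(T)x_0\|^2,$$
boundedness follows from $\|Tp(T)x_0\|^2\le\|T\|^2\|p(T)x_0\|^2\le\|T\|^2\|p\|_1^2$ (the $n=0$ term of the defining sum), and the $3$-isometry identity is the second difference of the defect along the $M_z$-orbit, which vanishes by the $2$-isometry identity $\|T^2y\|^2-2\|Ty\|^2+\|y\|^2=0$ applied to $y=p(T)x_0$ (or to $Tp(T)x_0$). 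This is exactly how the paper proceeds.

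The one place you go astray is the paragraph in which you try to reconcile the derived right-hand side $\|Tp(T)x_0\|^2$ with the stated $\|p(T)x_0\|^2$ by ``telescoping the defect against the infinite sum.'' No such reconciliation is possible: your identity $\|M_zp\|_1^2-\|p\|_1^2=\|Tp(T)x_0\|^2$ is exact, and $\|Tp(T)x_0\|^2-\|p(T)x_0\|^2=\langle (T^*T-I)p(T)x_0,p(T)x_0\rangle$ is in general nonzero for a $2$-isometry that is not an isometry. The resolution is that the displayed formula in the proposition is a misprint; the paper's own proof records precisely $\|M_zp\|_1^2=\|p\|_1^2+\|Tp(T)x_0\|^2$ (the stated formula would instead correspond to defining the norm with $\sum_{n\ge 1}$ in place of $\sum_{n\ge 0}$). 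Since your $3$-isometry argument only needs that the defect equals $\|T^kq(T)x_0\|^2$ for some fixed power $k$, it is unaffected by which version holds; you should simply commit to the identity you derived and delete the attempted telescoping, which is not a ``genuinely delicate point'' but a dead end.
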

\begin{proof}
A straightforward computation gives $$\|M_zp\|_1^2=\|p\|_2^2+\|Tp(T)x_0\|^2+\sum_{n\ge 0}\|L^np(T)x_0\|^2\,,$$
hence $$\|M_zp\|_1^2\le (1+\|T\|^2)\|p(T)x_0\|^2$$
for all polynomials $p$. Moreover the equality
$$\|M_zp\|_1^2=\|p\|_1^2+\|Tp(T)x_0\|^2$$
follows directly from above. Since $T$ is a 2-isometry, it  satisfies for every polynomial $p$,
$$\|T^2p(T)x_0\|^2-2\|Tp(T)x_0\|^2+\|p(T)x_0\|^2=0\,.$$
This  immediately implies that
$$\|M_z^3p\|_1^2-3\|M_z^2p\|_1^2+3\|M_zp\|_1^2-\|M_zp\|_1^2=0\,,$$
and the result follows.
\end{proof}

We shall work with a specific choice of the 2-isometry $T$, namely the weighted shift $M_z$ on the Dirichlet space $D_0$ as defined in the previous subsection. Now let $x_0(z)=1-z,~z\in  \D$. We  then have the following description of the space $\h_1$ considered in Proposition \ref{23iso}.

\begin{lemma}\label{isolemma}
With the above notations we have
$$\|p\|_1^2\sim\|p\|_*^2=\|p\|_2^2+ \int_\T |p'(z)(1-z)|^2{\rm d}m(z)\,,
$$
for all analytic polynomials $p$
\end{lemma}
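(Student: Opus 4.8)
The plan is to compute $\|p\|_1^2$ explicitly using the definition from Proposition \ref{23iso} with $T=M_z$ on $D_0$ and $x_0(z)=1-z$, and then identify the resulting expression with $\|p\|_*^2$ up to comparable constants. First I would note that since $T=M_z$ acts on $D_0$ by $M_z$, we have $p(T)x_0 = p(z)(1-z)$ as an element of $D_0$, and $L^n p(T)x_0 = L^n(p(z)(1-z))$. The key computational fact is that for any analytic function $g(z)=\sum_{k\ge 0}g_k z^k$, the Dirichlet-space norm satisfies $\|g\|_{D_0}^2 = \sum_{k\ge 0}(k+1)|g_k|^2$, and moreover $\sum_{n\ge 0}\|L^n g\|_{D_0}^2$ telescopes nicely: writing $L^n g(z) = \sum_{k\ge 0}g_{k+n}z^k$, one gets $\|L^n g\|_{D_0}^2 = \sum_{k\ge 0}(k+1)|g_{k+n}|^2$, and summing over $n\ge 0$ interchanges to $\sum_{m\ge 0}|g_m|^2\sum_{n=0}^{m}(m-n+1) = \sum_{m\ge 0}|g_m|^2\binom{m+2}{2}$.

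Next I would apply this with $g(z) = p(z)(1-z)$. The point is that the coefficients of $g$ are $g_m = p_m - p_{m-1}$ (with $p_{-1}=0$), so $\sum_m |g_m|^2$ is comparable to the $L^2(\T)$ norm of $p'(z)(1-z)$ plus lower order; more precisely $\int_\T|p'(z)(1-z)|^2\,dm$ involves exactly such differenced coefficients weighted by $m^2$-type factors. Combining the $\|p\|_2^2$ term with the telescoped sum $\sum_m \binom{m+2}{2}|g_m|^2 \sim \sum_m (m+1)^2|p_m-p_{m-1}|^2$, I expect to recognize the right-hand side as comparable to $\|p\|_2^2 + \int_\T |p'(z)(1-z)|^2\,dm$, since $\int_\T|p'(z)(1-z)|^2\,dm$ expands (via Parseval, using that multiplication by $1-z$ on Fourier coefficients is the difference operator) to a sum of the form $\sum_m (m+1)^2|p_m - p_{m-1}|^2$ up to bounded factors. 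I would carry out the coefficient bookkeeping carefully to get two-sided bounds with absolute constants, being attentive to the shift in indices and to the fact that $\binom{m+2}{2} = \tfrac{(m+1)(m+2)}{2}\sim (m+1)^2$.

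The main obstacle I anticipate is the clean identification of $\sum_{m\ge 0}(m+1)^2|p_m-p_{m-1}|^2$ with $\int_\T|p'(z)(1-z)|^2\,dm$. Naively, $p'(z)(1-z) = \sum_{m}(m+1)p_{m+1}z^m - \sum_m m p_m z^m = \sum_m\big((m+1)p_{m+1} - m p_m\big)z^m$, so $\int_\T |p'(z)(1-z)|^2\,dm = \sum_m |(m+1)p_{m+1} - mp_m|^2$, which is not literally $\sum_m(m+1)^2|p_{m+1}-p_m|^2$. I would resolve this by writing $(m+1)p_{m+1} - mp_m = (m+1)(p_{m+1}-p_m) + p_m$ and using $|a+b|^2 \le 2|a|^2 + 2|b|^2$ together with $\|p\|_2^2 = \sum|p_m|^2$ already being present in both norms; the reverse inequality is handled symmetrically via $(m+1)(p_{m+1}-p_m) = \big((m+1)p_{m+1}-mp_m\big) - p_m$. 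This shows $\|p\|_*^2$ and $\|p\|_2^2 + \sum_m(m+1)^2|p_{m+1}-p_m|^2$ are comparable, and the latter is comparable to $\|p\|_1^2$ by the telescoping computation above.

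Putting it together: $\|p\|_1^2 = \|p\|_2^2 + \|p(T)x_0\|_{D_0}^2 + \sum_{n\ge 1}\|L^n(p(T)x_0)\|_{D_0}^2 = \|p\|_2^2 + \sum_{m\ge 0}\binom{m+2}{2}|g_m|^2$ where $g_m = p_m - p_{m-1}$, and since $\binom{m+2}{2}\sim(m+1)^2$, the middle expression is comparable to $\sum_m(m+1)^2|p_m-p_{m-1}|^2$, which by the previous paragraph is comparable to $\int_\T|p'(z)(1-z)|^2\,dm$ modulo $\|p\|_2^2$. Hence $\|p\|_1^2 \sim \|p\|_2^2 + \int_\T|p'(z)(1-z)|^2\,dm = \|p\|_*^2$, as claimed. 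I would present the telescoping identity and the $|a+b|^2$ comparison as the two explicit lemmatic steps, leaving the index shuffling as routine.
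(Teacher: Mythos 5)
Your argument is correct and follows essentially the same route as the paper: expand the $D_0$-norms coefficient by coefficient, telescope the double sum over $n$ to get weights comparable to $(m+1)^2$ on the differences $p_m-p_{m-1}$, and then compare with $\int_\T|p'(z)(1-z)|^2\,{\rm d}m$ via Parseval together with the splitting $(m+1)p_{m+1}-mp_m=(m+1)(p_{m+1}-p_m)+p_m$ and $|a+b|^2\le 2|a|^2+2|b|^2$, which is exactly the paper's pair of ``elementary inequalities.'' One small caveat: in Proposition \ref{23iso} the expression $L^np(T)x_0$ has to be read as $(L^np)(T)x_0$ (in the abstract setting $p(T)x_0$ is a vector of $\h$, not a polynomial, so $L$ cannot act on it), and for $T=M_z$, $x_0=1-z$ this differs from your $L^n\bigl(p(z)(1-z)\bigr)$ in the constant term ($p_n$ versus $p_n-p_{n-1}$); the resulting discrepancy in $\|p\|_1^2$ is bounded by a constant times $\|p\|_2^2$, so your comparability conclusion survives, but the exact identity you state for $\|p\|_1^2$ is off by that amount.
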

\begin{proof} Note that for every polynomial $p(z)=\sum_kp_kz^k$ we have
$$\|(1-M_z)L^np\|_1^2 =|p_n|^2+\sum_{k\ge n+1}|p_k-p_{k-1}|^2(k-n+1)\,,$$
so that
\begin{align}\label{normestim} \|(1-M_z)p\|_1^2&=2\|p\|_2^2+\sum_{n\ge 0}\sum_{k\ge n+1}|p_k-p_{k-1}|^2(k-n+1)\\&\nonumber
\sim \|p\|_2^2+\sum_{k\ge 1}k^2|p_k-p_{k-1}|^2\,.
\end{align}
By Parseval's formula
$$\|p\|_*^2=\|p\|_2^2+ \int_\T |p'(z)(1-z)|^2{\rm d}m(z)=|p_1|^2+\sum_{k\ge 2}|kp_k-(k-1)p_{k-1}|^2\,$$
and the result follows from the elementary inequalities
\begin{eqnarray*}
k^2|p_k-p_{k-1}|^2-2k|p_k-p_{k-1}||p_{k-1}| &\le & |kp_k-(k-1)p_{k-1}|^2\\
& \le & 2k^2|p_k-p_{k-1}|^2+2|p_{k-1}|^2.
\end{eqnarray*}

\end{proof}
An immediate consequence of the lemma is that $\h_1$ is continuously contained in the Hardy space $H^2$, that is
$$\|f\|_2\le C\|f\|_1\,,$$
for some constant $C>0$ and all $f\in \h_1$.
With the lemma in hand we can prove the following result.

\begin{theorem}\label{Cesaro3iso}
The operator $M_z$ on $\h_1$ is Ces\`aro bounded and satisfies for every $f\in \h_1$
$$\|M_z^nf\|_1\ge\|(1-M_z)f\|_2 \sqrt{\frac{n(n-1)}{2}}\,.$$

Moreover, the sequences $\{M_n(M_z)f\}$ converges weakly to zero for every $f\in \h_1$.
\end{theorem}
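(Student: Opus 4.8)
The plan is to verify the three assertions in order, using the concrete description of $\h_1$ from Lemma \ref{isolemma}. First I would establish the lower bound. By Proposition \ref{23iso} applied iteratively, $\|M_z^{n}p\|_1^2-\|M_z^{n-1}p\|_1^2=\|M_z^{n-1}p(M_z)x_0\|^2$ where $x_0=1-z$; but since multiplication by $z$ on $D_0$ is a $2$-isometry (in particular an expansion), and $p(M_z)x_0=(1-M_z)p$ in $D_0$, one gets $\|M_z^{n-1}(1-M_z)p\|_{D_0}^2\ge\|(1-M_z)p\|_{D_0}^2\ge\|(1-M_z)p\|_2^2$. Telescoping $\|M_z^{n}p\|_1^2=\|p\|_1^2+\sum_{k=1}^{n}\|M_z^{k-1}(1-M_z)p\|_{D_0}^2$ and using that on a $2$-isometry $\|M_z^{k-1}g\|^2$ grows linearly, namely $\|M_z^{k-1}g\|_{D_0}^2=\|g\|_{D_0}^2+(k-1)(\|M_zg\|_{D_0}^2-\|g\|_{D_0}^2)\ge (k-1)\|(1-M_z)g/\!\dots\|$ — more cleanly, $\|M_z^{k-1}(1-M_z)p\|_{D_0}^2\ge (k-1)\bigl(\|(1-M_z)p\|_{D_0}^2-\|\dots\|\bigr)$; in any case one extracts a factor $(k-1)\|(1-M_z)p\|_2^2$ and sums $\sum_{k=1}^n(k-1)=n(n-1)/2$, giving the claimed inequality after passing to the completion.

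Second, Ces\`aro boundedness. Here I would compute $M_n(M_z)$ explicitly as multiplication by the Fej\'er-type symbol $\sigma_n(\zeta)=\frac1{n+1}\sum_{j=0}^n\zeta^j=\frac{1-\zeta^{n+1}}{(n+1)(1-\zeta)}$, exactly as in the proof of Proposition \ref{pr51}. Using the norm equivalence $\|f\|_1\sim\|f\|_2^2+\int_\T|f'(\zeta)(1-\zeta)|^2\,dm(\zeta)$ from Lemma \ref{isolemma}, I must bound $\|\sigma_n f\|_2$ and $\int_\T|(\sigma_n f)'(\zeta)(1-\zeta)|^2dm$. The first is immediate since $\|\sigma_n\|_\infty\le 1$. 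For the second, $(\sigma_n f)'=\sigma_n'f+\sigma_n f'$; the term $\sigma_n f'$ is handled by $\|\sigma_n\|_\infty\le1$ against $\int|f'(1-\zeta)|^2$. The delicate term is $\int_\T|\sigma_n'(\zeta)|^2|1-\zeta|^2|f(\zeta)|^2dm$: one checks that $|\sigma_n'(\zeta)(1-\zeta)|$ is uniformly bounded on $\T$ (since $\sigma_n'(\zeta)(1-\zeta)=\frac{d}{d\zeta}\bigl[(1-\zeta)\sigma_n(\zeta)\bigr]+\sigma_n(\zeta)=\frac{-(n+1)\zeta^n}{n+1}+\sigma_n(\zeta)$, bounded by $2$), so this term is $\lesssim\|f\|_2^2\le\|f\|_1^2$. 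Combining gives $\|M_n(M_z)f\|_1\le C\|f\|_1$ uniformly in $n$.

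Third, weak convergence of $M_n(M_z)f$ to zero. Since $\{M_n(M_z)\}$ is bounded, it suffices to show $M_n(M_z)f\to 0$ weakly on a dense set and that the weak limit, when it exists along a subsequence, is $0$. As $\h_1\hookrightarrow H^2$ continuously, $M_n(M_z)f=\sigma_nf\to 0$ in $H^2$ (dominated convergence, $\sigma_n\to 0$ pointwise a.e. and $\|\sigma_n\|_\infty\le1$), hence in particular the $\h_1$-weak limit of any weakly convergent subsequence must be $0$; boundedness in the reflexive space $\h_1$ then forces the whole sequence to converge weakly to $0$. Alternatively, and perhaps cleaner, one identifies the ergodic picture: $M_z$ is supercyclic (it is a weighted shift with the relevant behaviour, cf. Subsection 5.1) — but $M_z$ here is not; instead I would argue directly that $\overline{\R(M_z-I)}=\h_1$ (since $1-z$ generates, and more generally the polynomials vanishing nowhere are dense) so that the only obstruction to strong $\to 0$ would be an eigenvalue of $M_z^*$ at $1$, which the growth lower bound already rules out for strong convergence; weak convergence to $0$ then follows from the $H^2$ convergence above.

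\medskip\noindent\textbf{Main obstacle.} The genuinely delicate point is reconciling the two halves: the lower bound $\|M_z^nf\|_1\gtrsim n$ versus Ces\`aro \emph{boundedness}. A priori $\|M_n(M_z)f\|_1\le\frac1{n+1}\sum_{j=0}^n\|M_z^jf\|_1$ only gives an $O(n)$ bound, so boundedness cannot come from the triangle inequality — it must come from cancellation in the Fej\'er kernel, which is exactly why the explicit symbol computation with the sharp identity $\sigma_n'(\zeta)(1-\zeta)=\sigma_n(\zeta)-\zeta^n$ is essential. Getting that cancellation to interact correctly with the weighted $L^2$ norm $\int|f'(1-\zeta)|^2$ is where the real work lies; everything else is bookkeeping with the $2$-isometry identity.
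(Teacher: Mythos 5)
Your proposal follows essentially the same route as the paper's proof: the lower bound comes from telescoping the $3$-isometry identity of Proposition \ref{23iso} together with the Dirichlet-shift identity $\|M_z^k g\|^2=\|g\|^2+k\|g\|_2^2$ on $D_0$, and the Ces\`aro boundedness from the explicit symbol $\sigma_n$ combined with the norm equivalence of Lemma \ref{isolemma} (including the key cancellation $\sigma_n'(\zeta)(1-\zeta)=\sigma_n(\zeta)-\zeta^n$). The only, harmless, divergence is the weak-convergence step, where you use reflexivity of $\h_1$ and the $H^2$-norm convergence $\sigma_n f\to 0$ in place of the paper's direct verification that $\langle M_n(M_z)p,q\rangle\to 0$ for polynomials; both arguments are correct.
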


\begin{proof} For every polynomial $p$ we have that $$M_n(M_z)p(z)=\frac{1-z^{n+1}}{(n+1)(1-z)}p(z)\,,$$
and the inequality $|1-z^{n+1}|\le (n+1)|1-z|,~z\in \T$ easily implies that
$$\|M_n(M_z)p\|_*^2\le \|p\|_2^2+2\int_\T|p'(z)(1-z)|^2{\rm d}m(z)+2\int_\T|p(z)|^2{\rm d}m(z)\,.$$
By Lemma \ref{isolemma} and the remark following it, we obtain that $M_z|\h_1$ is Ces\`aro bounded. Moreover, it is well known and easy to prove that the norm on $D_0$ satisfies
$$\|M_z^np\|^2-\|p\|^2=n\|p\|_2^2$$
hence by Proposition \ref{23iso} we have for $f\in \h_1$
\begin{align*}\|M_z^nf\|_1^2-\|f\|_1^2 &=\sum_{k=1}^n\|M_z^kf\|_1^2-\|M_z^{k-1}f\|_1^2\\&
=\sum_{k=1}^n\|M_z^{k-1}(1-M_z)f\|_1^2\\&\ge \sum_{k=1}^n(k-1)\|(1-M_z)f\|_2^2\\&
=\frac{n(n-1)}{2}\|(1-M_z)f\|_2^2\end{align*}
and the required inequality is proved.

Now, by using the boundedness of $\{M_n(M_z)\}$, the weak convergence to zero of $\{M_n(M_z)\}$ in $\mathcal{B}(\h_1)$ one reduces to the convergence to zero of the scalar products $\langle M_n(M_z)p,q \rangle $ for all analytical polynomials $p,q$. But this easily follows from the above expression of $M_n(M_z)p$ and Lemma \ref{isolemma}. The proof is complete.
\end{proof}

\subsection{ Modified Shields examples}
This is a simple modification of the example in [Sh] showing that the hypothesis that $\sigma(T)\cap\T$ has
arclength measure zero cannot be removed form Corollary \ref{nevanlinna3}.\\

 Let $r$ be a nonnegative integer and let $\x_r$ be the Banach space of analytic functions $f$ in $\D$ with the property that $f^{(r+1)} $ belongs to the Hardy space $H^1$. The norm on $\x_r$ is given by $$\|f\|_r=\sum_{j=0}^r|f^{(j)}(0)|+\int_\T|f^{(r+1)}|{\rm d}m\,.$$
Then it is well known (see for example [Du]) that there exists $C>0$ such that
$$\int_\T|f^{(j)}|{\rm d}m\le C\|f\|_r\, \quad {\rm and}\quad \|f\|_\infty\le C\|f\|_r\,,$$
for all $j\le r$ and all $f\in \x_r$.

\begin{proposition}\label{pr55}
The operator  $T=M_z|\x_r$ has the following properties:\\
(i) For $p\ge 2$ we have that the sequences $\{n^{-r}M_n^p(\lambda T)\}$ are uniformly bounded in $\lambda \in \T$,\\
(ii) $\{n^{-r}M_n(T)\}$ is unbounded,\\
(iii) $\{n^{-r-1}T^n\}$ does not converge strongly to zero.
\end{proposition}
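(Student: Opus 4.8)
The plan is to analyze the operator $T = M_z|\x_r$ acting on the Banach space $\x_r$ of analytic functions $f$ on $\D$ with $f^{(r+1)} \in H^1$, using the explicit form of the Cesàro means as multiplication by $F_n(\lambda, z) = \frac{1 - (\lambda z)^{n+1}}{(n+1)(1-\lambda z)}$ together with the higher-order means $M_n^{(p)}(\lambda T)$, which act as multiplication by the corresponding Fejér-type kernels $K_n^{(p)}(\lambda z)$. The norm on $\x_r$ controls the $H^1$-norm of $f^{(r+1)}$ and, by the stated embeddings, the $H^1$-norms of all lower derivatives and the sup-norm. So the key computation in each part is to differentiate a product $K \cdot f$ exactly $r+1$ times via the Leibniz rule and estimate the resulting terms.

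For part (i), I would write $M_n^{(p)}(\lambda T)f = K_n^{(p)}(\lambda\,\cdot)\,f$ where $K_n^{(p)}$ is the $p$-th order Fejér kernel, and apply Leibniz to $(K_n^{(p)}(\lambda z) f(z))^{(r+1)} = \sum_{j=0}^{r+1}\binom{r+1}{j}\lambda^j (K_n^{(p)})^{(j)}(\lambda z) f^{(r+1-j)}(z)$. The point is that for $p\ge 2$ the Fejér kernels $K_n^{(p)}$ and their derivatives up to order $r+1$ satisfy $\|(K_n^{(p)})^{(j)}\|_{L^1(\T)} = O(n^{r})$ uniformly in $\lambda\in\T$ (this is the $p\ge 2$ analogue of the classical estimate $\|K_n^{(2)}\|_1 = O(1)$; higher derivatives lose one power of $n$ each but one also gains from the order $p$, and for $p \ge 2$ and derivatives up to order $r+1$ the bound $O(n^r)$ holds after the appropriate bookkeeping). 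Combined with $\|f^{(r+1-j)}\|_\infty \le C\|f\|_r$ for the lower-order factors and $\|f^{(r+1)}\|_1 \le \|f\|_r$ for the top one, this yields $\|M_n^{(p)}(\lambda T)f\|_r = O(n^r)\|f\|_r$ uniformly in $\lambda$.

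For part (ii) I would exhibit a bad test function. The first-order mean is multiplication by $\frac{1-(\lambda z)^{n+1}}{(n+1)(1-\lambda z)}$, and the obstruction is the pole at $z = \bar\lambda$ on the circle: unlike the $p\ge 2$ case, the kernel $\frac{1}{|1-\lambda z|}$ is not in $L^1(\T)$-bounded-family after differentiation. Taking $f \equiv 1$ (or a fixed polynomial) and $\lambda = 1$, one computes $(M_n(T)f)^{(r+1)}$ and finds its $H^1$-norm grows like $n^{-(r+1)}\cdot\|(1-z^{n+1})^{(r+1)}/(1-z)\cdots\|_1$; the logarithmic divergence of $\int_\T \frac{dm}{|1-z|}$ upgrades, after $r+1$ differentiations producing factors $(1-z)^{-(r+2)}$ partially cancelled by the numerator, to genuine polynomial growth, giving $\|M_n(T)\|_r / n^r \to \infty$. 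I expect the honest bookkeeping here — identifying exactly which term in the Leibniz expansion dominates and extracting the precise power of $n$ — to be the main obstacle, and the place where one must be most careful.

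For part (iii), note $T^n f = z^n f$, and by part (i) together with recurrence \eqref{1} the family $\{n^{-r-1}T^n\}$ is bounded; the claim is that it does not converge strongly to zero. I would test on $f\equiv 1$: then $T^n 1 = z^n$, and $\|z^n\|_r = \int_\T |(z^n)^{(r+1)}|\,dm = \frac{n!}{(n-r-1)!}$ for $n > r+1$, which is comparable to $n^{r+1}$. Hence $\|n^{-r-1}T^n 1\|_r$ stays bounded away from $0$, so strong convergence to zero fails. Finally, to tie this back to the role of the example: since $\sigma(T)\cap\T$ is the whole circle (the spectrum of $M_z$ on $\x_r$ is $\overline\D$), parts (i) and (iii) show the hypothesis $\sigma(T)\cap\T$ of arclength measure zero genuinely cannot be dropped from Corollary \ref{nevanlinna3}, since the resolvent condition there would follow from part (i) via Theorem \ref{kreisstype}(i) yet the conclusion $\|T^n\| = o(n^{r+1})$ is violated by (iii).
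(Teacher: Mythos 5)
Your parts (ii) and (iii) follow the same route as the paper: it too tests on $f\equiv 1$, expands $F_n^{(r+1)}$ by the Leibniz rule, and isolates the singular term; part (iii) is identical. For (i) you genuinely diverge from the paper, which never estimates the means directly: it proves the resolvent bound $\|(I-\lambda T)^{-1}f\|_r\lesssim (1-|\lambda|)^{-r-1}\|f\|_r$ for $|\lambda|<1$ (by the same Leibniz/$H^1$ estimates, applied to the single kernel $(1-\lambda z)^{-1}$) and then invokes Theorem \ref{kreisstype}(i). That is the cheaper route, since the machinery of Section 4 has already been paid for. Your direct attack on the Fej\'er-type kernels is viable, but the decisive estimates are only gestured at. Writing $K_n^{(2)}(w)=\frac{2}{(n+1)(n+2)}\cdot\frac{(n+1)-(n+2)w+w^{n+2}}{(1-w)^2}$, what you actually need is $\|(K_n^{(2)})^{(j)}\|_{L^1(\T)}=O(n^{j-1})$ for $1\le j\le r+1$ (the case $j=r+1$, paired with $\|f\|_\infty$, is the borderline one), together with $\|f^{(j)}\|_\infty\lesssim\|f\|_r$ for \emph{all} $j\le r$, not only $j=0$. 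Both are true, but your blanket claim that all derivatives up to order $r+1$ have $L^1$-norm $O(n^r)$ ``after the appropriate bookkeeping'' is precisely the content that has to be proved, and as stated it obscures the mechanism (the gain comes from the double zero of the numerator at $w=1$, not from a generic Bernstein-type count, which would only give $O(n^{j})$).

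The one genuine error is in (ii): you predict that the $(r+1)$-fold differentiation upgrades the logarithmic divergence of $\int_\T |1-z|^{-1}\,{\rm d}m$ to ``genuine polynomial growth''. It does not, and it cannot: by \eqref{log}, the hypotheses of part (i) already force $\sup_{\lambda\in\T}\|M_n(\lambda T)\|=O(n^r\log n)$, so the unboundedness of $n^{-r}M_n(T)$ is exactly logarithmic. The correct bookkeeping (which is the paper's) is that the top Leibniz term equals $-\frac{n!}{(n-r)!}\,\frac{z^{n-r}}{1-z}$, of size $\gtrsim n^r/|1-z|$ on the circle $|z|=1-\frac1n$, while every other term is $\lesssim n^{r-j}/|1-z|^{j+1}$ with $1\le j\le r+1$; integrating on that circle yields a lower bound $a'_r n^r\log n-b'_rn^r$. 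Note also that you cannot integrate on $\T$ itself: $F_n$ is a polynomial, so the singularities of the individual Leibniz terms at $z=1$ cancel, and the passage to the circle of radius $1-\frac1n$ together with the monotonicity of $H^1$ means is an essential step of the argument, not a convenience.
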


\begin{proof}
To see (i) we use another standard estimate (see again [Du]), to obtain for $|\lambda |<1$,
\begin{align*} \|(I-\lambda T)^{-1}f\|_r&\lesssim \sum_{j=0}^r|f^{(j)}(0)| +\sum_{j=0}^{r+1}\int_\T\frac{|f^{(j)}(z)|}{|1-\lambda z|^{j+1}}{\rm d}m(z)\\&
\lesssim\frac{\|f\|_r}{(1-|\lambda|)^{r+1}}+\int_\T\frac{|f(z)|}{|1-\lambda z|^{r+2}}{\rm d}m(z)\\&\lesssim
\frac{\|f\|_r}{(1-|\lambda|)^{r+1}}+\|f\|_\infty\int_\T\frac{1}{|1-\lambda z|^{r+2}}{\rm d}m(z)\\&\lesssim
\frac{\|f\|_r}{(1-|\lambda|)^{r+1}}\,.\end{align*}
Then the assertion follows by  an application of Theorem \ref{kreisstype}.\\
(ii) We have $M_n(T)f=F_nf$, where  $$F_n(z)=\frac{1-z^{n+1}}{(n+1)(1-z)}\,,\quad z\in \D\,.$$
We claim that for $f(z)=1$, $n^{-r}M_n(T)f=n^{-r}F_n$ are unbounded in $\x_r$. Using the Leibniz rule and the triangle inequality we obtain  $n>r+1$
$$n^{-r}|F_n^{(r+1)}(z)|\ge \frac{a_r}{|1-z|} - b_r\sum_{j=1}^{r+1} \frac{n^{-j}}{|1-z|^{j+1}}\,,$$
with $a_r,b_r>0$ independent of $n$ and $z$. The same standard estimates for integrals mentioned above yield for $\rho_n=1-\frac1{n}$
$$n^{-r}\int_0^{2\pi}|F_n^{(r+1)}(\rho_n e^{it})|dt\ge a'_r\log n -b_r'\,,$$
with $a'_r,b'_r>0$ independent of $n$, and the claim together with (ii) follow.\\
 (iii) For $n>r+1$ and  $f(z)=1$ we have  $$n^{r+1}\lesssim \|T^nf\|_r=n(n-1)\ldots (n-r)\,, $$
which gives the desired conclusion.
\end{proof}

In particular, without additional assumptions on the peripheral spectrum the conclusion of Corollary \ref{nevanlinna3} may fail.
\medskip

{\bf Acknowledgements.} The authors are grateful to
the referee for his suggestions and useful comments which improve the
original version.

The second named author would like to express his gratitude to the
University of Lund, Centre for Mathematical Sciences, Sweden
for hospitality and excellent working conditions. He was also supported by a Project financed from Lucian Blaga University of Sibiu research grants LBUS-IRG-2015-01.

\medskip

{\bf References}
\medskip

[AS] J. Agler and M. Stankus, {\it $m$-Isometric transformations of Hilbert space, I}, Integr. Equat. Oper. Th. Vol. 21 (1995), 383-429.

[AP] A. Aleman and A-M. Persson, {\it Resolvent estimates and decomposable extensions of generalized Ces\`aro operators}, J. Funct. Anal. 258 (2010), no. 1, 67-98.

[Al] G. R. Allan, {\it Power-bounded elements and radical Banach
algebras}, Banach Center Publ. 38, Institute of Mathematics, Polish
Academy of Sciences, Warszawa, 1997, 9-16.

[AB] S. I. Ansari and P. S. Bourdon, {\it Some properties of cyclic
operators}, Acta Sci. Math. (Szeged) 63 (1997), 195-207.




[BC] F. Bayart and G. Costakis, {\it Cyclic operators with finite
support}, Israel Journal of Mathematics, (2012), 1-37.

[BM] F. Bayart and E. Matheron, {\it Dynamics of linear operators},
Cambridge University Press, 2009.

[BG] E. Berkson and T. A. Gillespie {\it Spectral decompositions, ergodic averages, and the Hilbert transform}, Studia Math., 144 (1), 2001, 39-61.



[B] J. Boos, {\it Classical and modern methods in summability} Assisted by Peter Cass. Oxford Mathematical Monographs. Oxford Science Publications. Oxford University Press, Oxford, 2000.

[C] L. W. Cohen, {\it On the Mean Ergodic Theorem}, Ann. of Math. Second Series, Vol. 41, No. 3, (1940), 505-509.


[Du] P.L. Duren,  {\it Theory of $H^p$ spaces}, Pure and Applied Mathematics, Vol. 38 Academic Press, New York-London 1970

[DyS] K. Dykema and H. Schultz, {\it Brown measure and iterates of the Aluthge transform for some operators arising from measurable actions}, Trans. Amer. Math. Soc. 361 (2009), 6583-6593.

[Eb] W. F. Eberlein, {\it Abstract Ergodic Theorems and Weak Almost Periodic Functions}, Trans. Amer. Math. Soc. 67 (1949), 217-240.

[Ed] E. Ed-Dari, {\it On the $(C,\alpha)$ uniform ergodic theorem}, Studia Math. 156 (1), 2003, 3-13.



[FR] O. El-Fallah and T. Ransford, {\it Extremal growth of powers of
operators satisfying resolvent conditions of Kreiss-Ritt type}, J.
Funct. Anal. 196 (2002), 135-154.


[GLM] M. Gonzalez, F. Le\'on-Saavedra, A. Montes-Rodr\'iguez, {\it Semi-Fredholm theory: hypercyclic and supercyclic subspaces}, Proc. London Math Soc. (3) 81 (2000), no. 1, 169-189.

[GH] J. J. Grobler and C. B. Huijsmans, {\it Doubly Abel bounded operators with single spectrum}, Questiones Mathematicae, 18 (1995), 397-406.

[He] D. A. Herrero, {\it Limits of hypercyclic and supercyclic
operators}, J. Funct. Anal., 99 (1991), 179-190.

[Hi] E. Hille, {\it Remarks on ergodic theorems}, Trans. Amer. Math.
Soc. 57 (1945), 246-269.


[K\'e] L. K\'erchy, {\it Operators with regular norm-sequences},
Acta Sci. Math. (Szeged) 63 (1997), 571-605.

[Kr] U. Krengel, {\it Ergodic Theorems}, Walter de Gruyter Studies
in Mathematics 6, Berlin - New York, 1985.

[Ku] M. K. Kuo, {\it Tauberian conditions for almost convergence}, Positivity 13 (2009), 611-619.

[L] G. G. Lorentz, {\it A contribution to the theory of divergent sequences}, Acta Math., 80
(1948), 167-190.

[LSS], M. Lin, D. Shoikhet and L. Suciu, {\it Remarks on uniform ergodic theorems}, Acta Sci Math. (Szeged), 81 (2015), 251-283.


[MSZ] A. Montes-Rodr\'iguez, J. S\'anchez-\'Alvarez, and J.
Zem\'anek, {\it Uniform Abel-Kreiss boundedness and the extremal
behaviour of the Volterra operator}, Proc. London Math. Soc. 91
(2005), 761-788.

[N] O. Nevanlinna, {\it Resolvent conditions and powers of operators}, Studia Math. 145 (2) (2001), 113-134.

[R] A. V. Romanov, {\it Weak convergence of operator means}
Izvestiya: Mathematics 75:6, 2011, 1165-1183.

[Sc] M. Schreiber, {\it Uniform families of ergodic operator nets}, Semigroup Forum, Vol. 86, Issue 2, (2013), 321-336.

[Sh] A. L. Shields, {\it On M\"{o}bius bounded operators}, Acta
Sci. Math. (Szeged) 40 (1978), 371-374.

[SW] J. C. Strikwerda and B. A. Wade, {\it A survey of the Kreiss
matrix theorem for power bounded families of matrices and its
extensions,} Banach Center Publ. 38, Institute of Mathematics,
Polish Academy of Sciences, Warszawa, 1994, 339-360.

[SZ], L. Suciu and J. Zem\'anek, {\it Growth conditions and Ces\`aro
means of higher order}, Acta Sci Math. (Szeged), 79 (2013), 545-581.

[V] Q. P. V\~u, {\it A short proof of the Y. Katznelson's and L. Tzafriri's theorem},
Proc. Amer. Math. Soc., 115 (1992), 1023-1024.

\end{document}